\let\eps\varepsilon
\newcommand{\bV}{\mathbf V}
\newcommand{\bVt} {\widetilde{\mathbf{V}}}
\newcommand{\R}{\mathbb R}
\newcommand{\bn}{n}
\newcommand{\bw}{\vec{w}}
\newcommand{\bx}{x}
\newcommand{\Q}{\mathcal Q}
\newcommand{\T}{\mathcal T}
\newcommand{\Div}{\operatorname{\rm div}}
\newcommand{\unabla}{{\nabla}_{\Gamma}}
\newcommand{\nablaG}{\unabla}
\newcommand{\unablah}{{\nabla}_{\Gamma_h}}
\newcommand{\rd}{\mathrm{d}}
\newcommand{\Gh}{\Gamma_h}
\newcommand{\hphi}{\phi_h^{\rm lin}}
\newcommand{\Ione}{I^1}
\newcommand{\lin}{\text{lin}}
\newcommand{\Gammalin}{\Gamma^{\lin}}
\newcommand{\OGamma}{\Omega^\Gamma}
\newcommand{\mT}{\T_h}
\newcommand{\jump}[1]{[\![#1]\!]}
\newcommand{\Vk}{V_{h,\Phi}}
\newcommand{\Vregh}{V_{\text{reg},h}}
\newcommand{\enormh}[1]{\Vert #1 \Vert_h}
\newcommand{\Gs}{\mathcal{S}}
\newcommand{\DivG}{{\,\operatorname{div_\Gamma}}}
\newcommand{\Wo}{\mbox{\small$\overset{\circ}{W}$}}
\newcommand{\la}{\left\langle}
\newcommand{\ra}{\right\rangle}
\newcommand{\bI}{\mathbf I}
\def\nat{\nabla_{\Gamma}}
\def\nath{\nabla_{\Gamma_h}}
\def\enorm#1{|\!|\!| #1 |\!|\!|}
\newtheorem{assumption}{Assumption}
\begin{document}

\title*{Trace Finite Element Methods for PDEs on Surfaces}
\author{Maxim A. Olshanskii and Arnold Reusken}

\institute{Maxim A. Olshanskii \at Department of Mathematics, University of Houston, Houston, Texas 77204-3008, USA \email{molshan@math.uh.edu}
\and Arnold Reusken \at Institut f\"ur Geometrie und Praktische Mathematik, RWTH Aachen University,
D-52056 Aachen, Germany \email{reusken@igpm.rwth-aachen.de}}

\maketitle

\abstract{In this paper we consider a class of unfitted finite element methods for discretization of partial differential equations on surfaces. In this class of methods known as the Trace Finite Element Method (TraceFEM),
restrictions or traces of background surface-independent finite element functions are used to approximate the solution of a PDE on a surface. We treat equations on steady and time-dependent (evolving) surfaces. Higher order TraceFEM is explained in detail. We review the error analysis and algebraic properties of the method. The paper navigates through the known variants of the TraceFEM and the literature on the subject.}



\section{Introduction}

Consider the Laplace--Beltrami equation on a  smooth closed surface $\Gamma$,
\begin{equation}
-\Delta_{\Gamma} u+u=f\quad\text{on}~~\Gamma.
\label{LB}
\end{equation}
Here  $\Delta_\Gamma$ is the  Laplace--Beltrami operator  on $\Gamma$.
Equation \eqref{LB} is an example of surface PDE, and it will serve as a model  problem to explain the main principles of the TraceFEM.
In this introduction we start with a brief review of the $P_1$ TraceFEM for \eqref{LB}, in  which we explain the key ideas of this method.
In this review paper  this basic $P_1$ finite element method  applied to the model problem \eqref{LB} on a stationary surface $\Gamma$ will  be extended to a general  TraceFE methodology, including   higher order elements and surface approximations, time-dependent surfaces, adaptive methods, coupled problems, etc.

The main motivation for the development of the TraceFEM is the challenge of building an accurate and computationally efficient numerical method for surface PDEs that avoids a triangulation of $\Gamma$ or any other fitting of a mesh to the surface $\Gamma$. The method turns out to be particularly useful for problems with evolving surfaces in which the surface is implicitly given by a level set function. To discretize the partial differential equation on $\Gamma$, TraceFEM uses a surface independent background mesh on a fixed  bulk  domain $\Omega\subset\mathbb{R}^3$, such that $\Gamma\subset\Omega$.  The main concept of the method is to introduce a  finite element space based on a volume triangulation (e.g., tetrahedral tessellation) of  $\Omega$, and to use traces of functions from this bulk finite element space on (an approximation of) $\Gamma$. The resulting trace space is used to define a finite element method for \eqref{LB}.

As an  example, we consider  the $P_1$ TraceFEM for \eqref{LB}. Let $\T_h$ be a consistent shape regular tetrahedral tessellation of  $\Omega \subset \Bbb{R}^3$ and  let $V_{h}^{\rm bulk}$  denote the standard FE space of continuous piecewise $P_1$ functions
w.r.t. $\T_h$. Assume $\Gamma$ is given by the zero level of a $C^2$ level set function $\phi$, i.e.,
$\Gamma=\{\bx\in\Omega\,:\, \phi(\bx)=0\}$. Consider the Lagrangian interpolant $\phi_h\in V_h^{\rm bulk}$ of $\phi$ and set
\begin{equation}\label{Gammah}
\Gamma_h:=\{\bx\in\Omega\,:\, \phi_h(\bx)=0 \}.
\end{equation}

Now we have an implicitly defined $\Gamma_h$, which is  a polygonal approximation of $\Gamma$.
This $\Gamma_h$ is a closed surface  that can be partitioned in planar triangular segments:
$
 \Gamma_h=\bigcup_{K\in\mathcal{F}_h} K,
$
where $\mathcal{F}_h$ is the set of all surface triangles.
 \begin{figure}[ht!]
 \centering
   \includegraphics[width=0.35\textwidth]{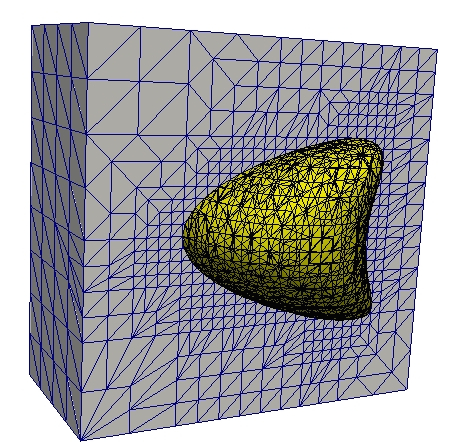}\qquad
  \includegraphics[width=0.35\textwidth]{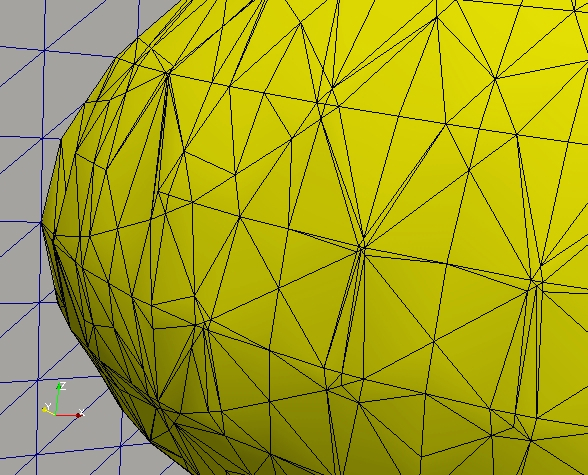}
 \caption{Example of a background mesh $\mathcal{T}_h$  and induced surface mesh $\mathcal{F}_h$.}
 \label{fig:surfaceMesh}
\end{figure}
The bulk triangulation $\T_h$, consisting of tetrahedra and the induced surface triangulation  are illustrated in Figure~\ref{fig:surfaceMesh} for   a surface from \cite{ORXsinum}. There are no restrictions on how $\Gamma_h$ cuts through the background mesh, and thus   the resulting triangulation $\mathcal{F}_h$  is \emph{not} necessarily  regular. The elements from $\mathcal{F}_h$ may have very small interior angles and the size of neighboring triangles can vary strongly, cf.~Figure~\ref{fig:surfaceMesh} (right).
Thus $\Gamma_h$ is not a ``triangulation of $\Gamma$'' in the  usual sense (an $O(h^2)$ approximation of $\Gamma$, consisting of \textit{regular} triangles).
This surface triangulation $\mathcal{F}_h$ is an easy to compute $O(h^2)$ accurate approximation of $\Gamma$ and in the TraceFEM it is used only to perform numerical integration.  The \textit{approximation properties of the method entirely depend on the volumetric tetrahedral mesh $\T_h$}.
The latter is a fundamental property of the TraceFEM, as will be explained in more detail  in the remainder of this article.

As starting point for the finite element method we use a weak formulation of \eqref{LB}:
Find $u\in H^1(\Gamma)$ such that
$
\int_{\Gamma} uv+\nabla_\Gamma u\cdot\nabla_\Gamma v \, \,ds =\int_{\Gamma}f v\, \,ds
$
for all $v\in H^1(\Gamma)$. Here $\nabla_\Gamma$ is the tangential gradient on $\Gamma$.  In the TraceFEM, in the weak formulation \textit{one replaces $\Gamma$ by $\Gamma_h$  and instead of $H^1(\Gamma_h)$ uses the space of traces on $\Gamma_h$ of all functions from the bulk finite element space.}
The Galerkin formulation of \eqref{LB} then reads:  Find $u_h\in V_h^{\rm bulk}$ such that
\begin{equation}
\int_{\Gamma_h} u_hv_h+\nath u_h\cdot\nath v_h \,ds_h =\int_{\Gamma_h}f_h v_h\, \,ds_h \quad \text{for all}~~v_h\in V_h^{^{\rm bulk}}. \label{FEM}
\end{equation}
 Here $f_h$ is a suitable approximation of $f$ on $\Gamma_h$. In the space  of \textit{traces} on $\Gamma_h$, $V_h^\Gamma:=\{v_h\in H^1(\Gamma_h)\,|\,v_h= v_h^{\rm bulk}|_{\Gamma_h},~v_h^{\rm bulk}\in V_h^{\rm bulk}\}$,  the solution of \eqref{FEM}  is unique. In other words, although in general there are multiple functions $u_h\in V_h^{\rm bulk}$ that satisfy \eqref{FEM}, the corresponding $u_h|_{\Gamma_h}$ is unique.
  Furthermore, under reasonable assumptions the following optimal error bound  holds:
\begin{equation} \label{eqqq1}
\|u^e-u_h\|_{L^2(\Gamma_h)}+h\|\nabla_{\Gamma_h}(u^e-u_h)\|_{L^2(\Gamma_h)} \le c\,h^2 \|u\|_{H^2(\Gamma)},
\end{equation}
where $u^e$ is a suitable extension of the solution to \eqref{LB} off the surface $\Gamma$ and $h$ denotes the mesh size of the \emph{outer} triangulation $\T_h$. The constant $c$ depends only on the shape regularity of $\T_h$  and is \textit{independent of how the surface $\Gamma_h$ cuts through the background mesh}. This robustness property is extremely important for extending the method to time-dependent  surfaces. It allows to keep the same background mesh while the surface evolves  through the bulk domain. One thus avoids unnecessary mesh fitting and mesh reconstruction.

A rigorous convergence analysis from which the result \eqref{eqqq1} follows will be given further on (section~\ref{sectanalysis}). Here we already mention two interesting properties of the induced surface triangulations which shed some light on why the method performs optimally for such shape \emph{ir}regular surface meshes as
 illustrated in Figure~\ref{fig:surfaceMesh}. These properties are the following: (i)~If the background triangulation $\T_h$ satisfies the minimum angle condition,  then the surface triangulation satisfies the \textit{maximum} angle condition ~\cite{olshanskii2012surface}; (ii)~Any element from  $\mathcal{F}_h$ shares at least one vertex with a full size shape regular triangle from $\mathcal{F}_h$~\cite{DemlowOlshanskii12}.

For the matrix-vector representation of the TraceFEM one uses the nodal basis of the bulk finite element space $V_h^{\rm bulk}$ rather than trying to construct a basis in $V_h^\Gamma$. This leads to singular or badly conditioned mass and stiffness matrices. In recent years stabilizations have been developed which are easy to implement and result in matrices with acceptable condition numbers. This linear algebra topic is treated is section~\ref{sectStiffness}.

In Part II of this article  we explain  how the ideas of the TraceFEM outlined above extend to the case of evolving surfaces.  For such  problems the method uses a \emph{space--time framework}, and the  trial and test finite element spaces consist of \emph{traces of standard volumetric elements on the space--time manifold}. This manifold results from the evolution of the surface.
The method stays essentially Eulerian as a surface is not tracked by a mesh.
Results of numerical tests  show that the method applies, without any modifications and without stability restrictions on mesh or time step sizes, to  PDEs on surfaces \textit{undergoing topological changes}.
We believe that this is a unique property of TraceFEM among the state-of-the-art surface finite element methods.

\subsection{Other surface Finite Element Methods}\label{secOther}

We briefly comment on other approaches known in the literature for solving PDEs on surfaces. A detailed overview of different finite element techniques for surface PDEs is given in \cite{DziukElliottAN}.
 The study of FEM for PDEs on general surfaces can be traced back to the paper  of Dziuk  \cite{Dziuk88}. In that paper, the Laplace--Beltrami equation is considered on a stationary surface $\Gamma$  approximated by a regular family $\{\Gamma_h\}$ of consistent  triangulations. It is assumed that all vertices in the triangulations lie on $\Gamma$. The finite element space then consists of scalar functions that are continuous on $\Gamma_h$ and linear on each triangle in the triangulation.  
The method is  extended from linear to higher order finite elements in \cite{Demlow09}. An adaptive finite element version of the method based on linear finite elements and suitable \textit{a posteriori} error estimators are treated in \cite{Demlow06}.
More recently,  Elliott and co-workers \cite{Dziuk07,DziukElliot2013a,elliott2015error} developed and analyzed an extension of the method of Dziuk for evolving surfaces. This surface finite element method is based on a Lagrangian tracking of the surface evolution. The surface $\Gamma(t)$ is approximated by an evolving triangulated surface $\Gamma_h(t)$. It is assumed that all vertices in the triangulation lie on $\Gamma(t)$ and a given bulk velocity field transports the vertices as material points (in the ALE variant of the method the tangential component of the transport velocity can be  modified to assure a better distribution of the vertices).
The finite element space then consists of scalar functions that are continuous on $\Gamma_h(t)$ and for each fixed $t$ they are linear on each triangle in the triangulation $\Gamma_h(t)$. Only recently a higher order evolving surface FEM has  been  studied in \cite{kovacs2016high}. 
If a surface undergoes strong deformations, topological changes, or  is defined implicitly, e.g., as the zero level of a level set function, then numerical methods based on such a   Lagrangian approach have certain disadvantages.

In order to avoid remeshing and make full use of the implicit definition of the surface as the zero of a  level set function, it was first proposed in \cite{bertalmio2001variational} to \emph{extend the  partial differential equation} from the surface to a set of positive Lebesgue measure in $\R^3$. The resulting PDE is then solved in one dimension higher but can be solved on a mesh that is unaligned to the surface.  Such an extension approach is studied in \cite{AS03,Greer,xu2006level,XuZh} for finite difference approximations, also for PDEs on moving surfaces. The extension approach can also be combined with finite element methods, see \cite{burger2009finite,DziukElliot2010,olshanskii2016narrow}. Another related method, which embeds a surface problem in a Cartesian bulk problem, is the closest point method of Ruuth and co-authors \cite{macdonald2009implicit,ruuth2008simple,petras2016pdes}. The method is based on using the closest point operator to extend the problem from the surface to a
small neighborhood of
the surface, where standard Cartesian finite differences are used to discretize differential operators. The surface PDE is then embedded and discretized in the neighborhood. Implementation requires the knowledge or calculation of the closest point on the surface for a given point in the neighborhood.
We are not aware of a finite element variant of the closest point method. Error analysis is also not known. The methods based on embedding a surface PDE in a bulk PDE are known to have certain issues such as the need of artificial  boundary conditions and difficulties in handling geometrical singularities, see, e.g., the discussion in \cite{Greer}.

The TraceFEM that we consider in this article, or very closely related methods, are also called \emph{CutFEM} in the literature, e.g. \cite{Alg1,burman2016cutb,burman2016full,burman2016cut}. Such CutFE techniques have originally been developed as unfitted finite element methods for interface problems, cf. the recent overview paper \cite{burman2015cutfem}. In such a method applied to a model Poisson interface problem one uses a standard finite element space on the whole domain and then ``cuts'' the functions from this space at the interface, which is equivalent to taking the trace of these functions on one of the subdomains (which are separated by the interface). In our TraceFEM one also uses a ``cut'' of  finite element functions from the bulk space, but now one cuts of the parts on both sides of the surface/interface and only keeps the part on the surface/interface. This explains why such trace techniques are also called \emph{Cut}-FEM.

\subsection{Structure of the article} The remainder of this article is divided into two parts. In the first part (sections~\ref{sectTraceFEM}-\ref{sectbulksurface}) we treat different aspects of the TraceFEM for stationary elliptic PDEs on a \emph{stationary} surface. As model problem we consider the Laplace--Beltrami equation \eqref{LB}. In section~\ref{sectTraceFEM} we give a detailed explanation of the TraceFEM and also consider a higher order isoparametric variant of the method. In section~\ref{sectStiffness} important aspects related to the matrix-vector representation of the discrete problem are treated. In particular several stabilization techniques are explained and compared. A discretization error analysis of TraceFEM is reviewed in section~\ref{sectanalysis}. Optimal (higher order) discretization error bounds are presented in that section. In section~\ref{sectconvdiff} we briefly treat a stabilized variant of TraceFEM that is suitable for convection dominated surface PDEs. A residual based \textit{
a posteriori}
error indicator for the TraceFEM is explained in section~\ref{sectadap}.  In the final section~\ref{sectbulksurface} of Part I  the Trace- or Cut-FEM  is applied for the discretization of a coupled bulk-interface mass transport model.

In the second part (sections~\ref{sec:evol}-\ref{sectVariants}) we treat different aspects of the TraceFEM for parabolic PDEs on an \emph{evolving} surface. In section~\ref{sec:evol}  well-posedness of a space--time weak formulation for a class of surface transport problems is studied.  A space--time variant of  TraceFEM is explained in section~\ref{sectSTTraceFEM} and some main results on stability and discretization errors for the method are treated in section~\ref{sectdiscranalysis}. A few recently developed variants of the space--time TraceFEM are briefly addressed in section~\ref{sectVariants}.

In view of the length of this article we decided not to present any results of numerical experiments. At the end of several sections we added remarks on numerical experiments (e.g. Remark~\ref{remnumexp1}) in which we refer to literature where results of numerical experiments for the methods that are treated are presented.
\\[3ex]
{\bf \Large Part I: Trace-FEM for stationary surfaces} \\[2ex]
In this part (sections~\ref{sectTraceFEM}-\ref{sectbulksurface}) we introduce the key ingredients of TraceFEM for elliptic and parabolic PDEs on \emph{stationary smooth surfaces}. The surface is denoted by $\Gamma$ and is assumed to be a smooth closed 2D surface, contained in a domain $\Omega \subset \Bbb{R}^3$.
We explain in more detail  how trace finite element spaces are used in a Galerkin method applied to the surface PDE. One important part of almost all numerical methods for surface PDEs is the numerical approximation of the surface. We address this topic, implementation aspects of the method, and properties of the stiffness matrix. Related to the latter topic we treat certain stabilization procedures for improving the conditioning of the stiffness matrix. We also discuss  an a-posteriori error indicator and an application of TraceFEM to coupled bulk-surface problems.

\section{Trace finite element method} \label{sectTraceFEM}
The trace finite element method applies to the variational formulation of a surface PDE.  We start with treating an elliptic problem and thus assume an $H^1(\Gamma)$ continuous and elliptic bilinear form $a(\cdot,\cdot)$, and for a given $f \in H^1(\Gamma)'$ we consider the following problem: find $u \in H^1(\Gamma)$ such that
\begin{equation} \label{contproblem}
 a(u,v)=f(v) \quad \text{for all}~~ v\in H^1(\Gamma).
\end{equation}
To simplify the presentation, we again restrict to the Laplace--Beltrami  model problem, i.e.,
\begin{equation} \label{eq:weak-LB}
  a(u,v):= \int_\Gamma\left( \unabla u \cdot \unabla v\, + u v\, \right)\rd s.
\end{equation}
We added the zero order term in this bilinear form to avoid the minor technical issue that for the problem with only the surface Laplacian one has to consider the bilinear form on the factor space $H^1(\Gamma)/\Bbb{R}$. The variational problem \eqref{contproblem} with the bilinear form defined in \eqref{eq:weak-LB} is well-posed. In section~\ref{sectconvdiff} we shall
consider another example, namely a surface convection-diffusion problem.

\subsection{Basic structure of TraceFEM} \label{sectbasic}
Let $\T_h$ be a  tetrahedral triangulation of the domain $\Omega \subset
\Bbb{R}^3$ that contains $\Gamma$.  This triangulation is assumed to be regular, consistent and stable~\cite{Braess};
it is the background mesh for the TraceFEM. On this background mesh,  $V_{h,j}$  denotes the standard space of $H^1$-conforming finite elements of degree $j \geq 1$,
\begin{equation} \label{defVm}
 V_{h,j}:=  \{\, v_h \in C(\Omega)\,|\,v_{h|T} \in \mathcal{P}_j~~\text{for all}~T \in \T_h\,\}.
\end{equation}
The nodal interpolation operator in $V_{h,j}$ is denoted by $I^j$.
We need an approximation $\Gamma_h$  of $\Gamma$. Possible constructions of $\Gamma_h$ and precise conditions that $\Gamma_h$ has to satisfy for the error analysis will be discussed later. For the definition of the method, it is sufficient to assume that
$\Gamma_h$ is a Lipschitz surface without boundary.
The active set of tetrahedra $\T_h^\Gamma \subset \T_h$ is defined by $\T_h^\Gamma=\{\, T \in  \T_h\,:\,{\rm meas}_2(\Gh \cap T) >0\, \}$. If $\Gh \cap T$ consists of a face $F$ of $T$, we include in $\T_h^\Gamma$ only one of the two tetrahedra which have this $F$ as their intersection. The domain formed by the tetrahedra from $\T_h^\Gamma$ is denoted further by $\omega_h$.  In the TraceFEM, only background degrees of freedom   corresponding to the tetrahedra from $\T_h^\Gamma$ contribute to algebraic systems.
Given a  bulk (background) FE space of degree $m$, $V_h^{\rm bulk}=V_{h,m}$, the corresponding \emph{trace space} is
\begin{equation} \label{defVG}
 V_{h}^\Gamma:= \{\, v_{h}|_{\Gh}\,:\, v_h \in V_h^{\rm bulk}\,\}.
\end{equation}
The trace space is a subspace of $H^1(\Gamma_h)$. On $H^1(\Gamma_h)$ one defines the finite element bilinear form,
\begin{equation*}
 a_h(u,v):=\int_{\Gh}( \unablah u \cdot \unablah v  + u v) \, \rd s_h.
\end{equation*}
The form is coercive on $H^1(\Gamma_h)$, i.e. $a_h(u_h,u_h) \geq \|u_h\|_{H^1(\Gamma_h)}^2$ holds. This guarantees that the TraceFEM has a unique solution in $V_{h}^\Gamma$. However, in TraceFEM formulations we prefer to use the background space $V_h^{\rm bulk}$ rather than $V_{h}^\Gamma$, cf. \eqref{FEM}, \eqref{discr1} and further examples in this paper. There are several reasons for this choice. First of all, in some versions of the method the \textit{volume} information from trace elements in $\omega_h$ is used; secondly, for implementation one uses nodal basis functions from $V_h^{\rm bulk}$ to  represent elements of $V_{h}^\Gamma$; thirdly, $V_{h}^\Gamma$ depends on the position of $\Gamma$, while  $V_h^{\rm bulk}$  does not;  and finally, the properties of $V_h^{\rm bulk}$ largely determine the properties of the method.
The trace space $V_{h}^\Gamma$ turns out to be  convenient for the analysis of the method.
 Thus, the basic form  of the TraceFEM for the discretization of \eqref{eq:weak-LB} is as follows: Find $u_h \in V_{h}^{\rm bulk}$ such that
\begin{equation} \label{discr1}
 a_h(u_h,v_h) = \int_{\Gh} f_h v_h \, \rd  s_h \quad \text{for all}~~v_h \in  V_{h}^{\rm bulk}.
\end{equation}
Here $f_h$ denotes  an approximation of the data $f$  on $\Gamma_h$. The construction of $f_h$ will be  discussed later, cf. Remark~\ref{daterror}.
Clearly, in \eqref{discr1} only the finite element functions $u_h,v_h \in V_{h}^{\rm bulk}$ play a role which have at least one  $T \in \T_h^\Gamma$ in their support.

\subsection{Surface approximation and isoparametric TraceFEM}   \label{sectGammaapprox}
One major ingredient in the TraceFEM (as in many other numerical methods for surface PDEs) is a construction of the surface approximation $\Gamma_h$. Several methods for numerical surface representation and approximation are known, cf.~\cite{DziukElliottAN}. In this paper we focus on the \emph{level set method} for surface representation. As it is well-known from the literature, the level set technique is a very popular method for surface representation, in particular  for handling evolving surfaces.

Assume that the surface $\Gamma$ is the zero level of a smooth level set function $\phi$, i.e.,
\begin{equation} \label{eq:Gamma}
\Gamma= \{\, x \in \Omega\,:\,\phi(x)=0\,\}.
\end{equation}
This level set function is not necessarily close to a signed distance function, but has the usual properties of a level set function:
$
\|\nabla \phi(x)\| \sim 1$, $\|D^2 \phi(x)\| \leq c$ for all $x$ in a neighborhood $U$ of $\Gamma$.
Assume that a finite element approximation $\phi_h\in V_{h,k}$  of the function $\phi$ is available.
If $\phi$ is sufficiently smooth, and one takes $\phi_h=I^k(\phi)$, then the estimate
\begin{equation} \label{eq:approx_phi}
\|\phi - \phi_h\|_{L^\infty(U)} + h \|\nabla(\phi - \phi_h)\|_{L^\infty (U)} \leq c h^{k+1}
\end{equation}
defines the accuracy of the geometry approximation by $\phi_h$. If $\phi$ is not known and $\phi_h$ is given, for example,
as the solution to the level set equation, then an estimate as in \eqref{eq:approx_phi} with some $k\geq 1$ is often assumed in
the error analysis of the TraceFEM.
In section~\ref{sectanalysis} we  explain how the accuracy of the geometry recovery influences the discretization error of the method. From the analysis we shall see that setting $m=k$ for the polynomial degree in  background FE space and the discrete level set function is the most natural choice.

The zero level of the finite element function $\phi_h$ (implicitly) characterizes an  interface approximation $\Gamma_h$:
\begin{equation} \label{eq:Gamma_h}
\Gamma_h= \{\, x \in \Omega\,:\,\phi_h(x)=0\,\}.
\end{equation}
With the exception of the linear case, $k=1$, the numerical integration over $\Gamma_h$ given implicitly in \eqref{eq:Gamma_h}  is a non-trivial problem. One approach to the numerical integration is based on an approximation of $\Gamma_h$ within each $T\in\T_h^\Gamma$ by elementary shapes. Sub-triangulations or
octree Cartesian meshes are commonly used for these purposes. On each elementary shape a standard quadrature rule is applied.  The approach is popular in combination with higher order XFEM, see, e.g., \cite{abedian2013performance,moumnassi2011finite,dreau2010studied},
and the level set method~\cite{min2007geometric,holdych2008quadrature}. Although numerically stable, the numerical integration based on sub-partitioning may significantly increase the computational complexity of a higher order finite element method.
Numerical integration over implicitly defined domains is a topic of current research, and in several recent papers~\cite{muller2013highly,saye2015high,fries2015higher,OlshSafin2,joulaian2016numerical} techniques  were developed that have optimal computational complexity. Among those, the moment--fitting method from ~\cite{muller2013highly} can be applied on 3D simplexes and, in the case of space--time methods, on 4D simplexes. The method, however,  is rather involved and the weights computed by the fitting procedure are not necessarily positive. As a computationally efficient alternative, we will treat below a higher order isoparametric TraceFEM, which  avoids the integration over a zero level of $\phi_h$.

The general framework of this paper, in particular the error analysis presented in section~\ref{sectanalysis}, provides an optimally accurate higher order method for PDEs on surfaces
both for the isoparametric approach and for approaches that make use  of a suitable integration procedure on implicitly defined domain as in \eqref{eq:Gamma_h}.

For piecewise linear polynomials  a computationally  efficient representation is straightforward. To exploit this property, we introduce the piecewise linear nodal interpolation of $\phi_h$, which   is denoted by $\hphi = \Ione \phi_h$. Obviously, we have $\hphi=\phi_h$ if $k=1$.  Furthermore, $\hphi(x_i)=\phi_h(x_i)$ at all vertices $x_i$ in the triangulation $\T_h$.
A lower order geometry approximation of the interface, which is very easy to determine,  is the zero level of this function:
$$\Gammalin := \{ x\in\Omega\mid \hphi(x) = 0\}.$$
In most papers on finite element methods for surface PDEs the surface approximation $\Gh=\Gammalin$ is used. This surface approximation is piecewise planar, consisting of triangles and quadrilaterals. The latter can be subdivided into triangles. Hence quadrature on  $\Gammalin$ can be reduced to quadrature on triangles, which is simple and computationally very  efficient.

Recently in \cite{lehrenfeld2015cmame} a computationally efficient \emph{higher order} surface approximation method has been introduced based on an isoparametric mapping. The approach from  \cite{lehrenfeld2015cmame} can be used to derive an efficient
higher order TraceFEM. We review the main steps below, while further technical details and analysis can be found in \cite{GLR}.
We need some further notation. All elements in the triangulation $\T_h$ which are cut by $\Gammalin$ are collected in the set $\T^{\Gamma_{\rm lin}}_h := \{T \in \mT\mid T \cap \Gammalin \neq \emptyset \}$. The corresponding domain is $\omega_h^{\rm lin} := \{ x \in T \mid T\in \T^{\Gamma_{\rm lin}}_h\}$.  We introduce a mapping $\Psi$ on $\omega_h^{\rm lin}$ with the property $\Psi(\Gammalin)=\Gamma$, which is defined as follows. Set $G:=\nabla \phi$, and define a  function $d: \omega_h^{\rm lin} \to \Bbb{R}$ such that $d(x)$ is the  smallest in absolute value number satisfying
\begin{equation} \label{cond1A}
  \phi(x + d(x) G(x))=\hphi(x)  \quad \text{for}~~x \in \omega_h^{\rm lin}.
\end{equation}
For $h$ sufficiently small the relation in \eqref{cond1A} defines a unique
 $d(x)$.
 Given the function
 $dG$
 we define:
\begin{equation} \label{psi1}
 \Psi(x):= x + d(x) G(x), \quad x \in \omega_h^{\rm lin}.
\end{equation}
From  $ \phi\big(\Psi(x)\big)=\hphi(x)$ it follows that $\phi\big(\Psi(x)\big)=0$ iff $\hphi(x)=0$, and thus $\Psi(\Gammalin)=\Gamma$ holds.
In general, e.g., if $\phi$ is not explicitly known,  the mapping $\Psi$ is not computable. We introduce an easy way to construct an accurate computable approximation of $\Psi$, which is based on $\phi_h$ rather than on $\phi$.

We define the polynomial extension
$\mathcal{E}_T: \mathcal{P}(T) \rightarrow \mathcal{P}(\mathbb{R}^3)$ so that for $v \in V_{h,k}$ we have $(\mathcal{E}_T v)|_T = v|_T,~T\in\T^{\Gamma_{\rm lin}}$.
For a  search direction $G_h \approx G $ one needs a sufficiently accurate approximation of $\nabla \phi$. One natural choice is
\[ G_h = \nabla \phi_h,
\]
but  there are also  other options.
Given $G_h$ we define  a function $d_h: \T^{\Gamma_{\rm lin}}_h \to \mathbb{R}$, $|d_h|\le\delta$, with $\delta > 0$ sufficiently small, as follows:
$d_h(x)$ is the  in absolute value  smallest  number such that
\begin{equation*} \label{eq:psihmap}
  \mathcal{E}_T \phi_h(x + d_h(x) G_h(x)) = \hphi(x), \quad \text{for}~~ x\in  T \in \T^{\Gamma_{\rm lin}}_h.
\end{equation*}
In the same spirit as above, corresponding to $d_h$ we define
\begin{equation*} \label{eq:psih}
  \Psi_h(x) := x + d_h(x) G_h(x), \quad \text{for}~~ x\in  T \in \T^{\Gamma_{\rm lin}}_h,
\end{equation*}
which is an approximation of the mapping $\Psi$ in \eqref{psi1}. For any fixed $x\in\T^{\Gamma_{\rm lin}}_h$ the value $\Psi_h(x)$ is easy to compute.
The mapping $\Psi_h$ may be discontinuous across faces and is not yet an isoparametric mapping.
To derive an isoparametric mapping, denoted by $\Theta_h$ below, one can use  a simple projection $P_h$ to map the transformation $\Psi_h$  into the continuous finite element space. For example,  one may define $P_h$ by averaging in a finite element node $x$, which requires only computing $P_h(x)$ for all elements sharing $x$. This results in
\begin{equation*} \label{defThetah} \Theta_h := P_h \Psi_h \in [V_{h,k}]^3.
\end{equation*}
 Based on this transformation one defines
\begin{equation} \label{Ghh}
\Gamma_h := \Theta_h(\Gammalin) = \{ x\in\Omega\,:\,\phi_h^{\rm lin} \big( \Theta_h^{-1}(x)\big) = 0\, \}.
\end{equation}
The finite element mapping $\Theta_h$ is completely characterized by its values at the finite element nodes. These values can be determined in a computationally very efficient way. From this it follows that for $\Gamma_h$ as in \eqref{Ghh} we have a computationally efficient representation. One can show that if \eqref{eq:approx_phi} holds then for both  $\Gamma_h$ defined in \eqref{eq:Gamma_h} or \eqref{Ghh} one gets (here and in the remainder the constant hidden in $\lesssim$ does not depend on how $\Gamma$ or $\Gamma_h$ intersects the triangulation $\T_h$):
\begin{equation} \label{resdist}
 {\rm dist}(\Gamma_h,\Gamma)=\max_{\bx\in \Gamma_h} {\rm dist}(\bx , \Gamma)  \lesssim h^{k+1}.
\end{equation}
For $\Gamma_h$ defined in  \eqref{Ghh}, however, we have a {computationally efficient higher order surface approximation} for all $k\ge1$. To allow an efficient quadrature in the TraceFEM on $\Gamma_h$, one also has to transform the background finite element spaces $V_{h,m}$ with the same transformation $\Theta_h$, as is standard in isoparametric finite element methods.  In this \emph{isoparametric} TraceFEM, we apply the local transformation $\Theta_h$ to the  space $V_{h,m}$:
\begin{equation}  \label{isoFEspace}
V_{h,\Theta}  = \{\,  v_h \circ \Theta_h^{-1} \mid v_h \in (V_{h,m})|_{\omega_h^{\rm lin}}\, \}= \{\,  (v_h \circ \Theta_h^{-1})|_{\Theta_h(\omega_h^{\rm lin})} \mid v_h \in V_{h,m} \, \}.
\end{equation}
The isoparametric TraceFEM discretization now reads, compare to \eqref{discr1}: Find $u_h \in V_{h,\Theta}$ such that
\begin{equation} \label{discriso}
\int_{\Gh} \unablah u_h \cdot \unablah v_h  + u_h v_h \, \rd s_h = \int_{\Gh} f_h v_h \, \rd  s_h \quad \text{for all}~~v_h \in  V_{h,\Theta},
\end{equation}
with $\Gamma_h := \Theta_h(\Gammalin)$. Again, the method in \eqref{discriso} can be reformulated in terms of the surface independent space $V_h^{\rm bulk}$, see \eqref{implemen}.

To balance the geometric and approximation errors, it is natural to take $m=k$, i.e., the same degree of polynomials is used in the approximation $\phi_h$ of $\phi$ and in the approximation $u_h$ of $u$.
The isoparametric TraceFEM is analyzed in \cite{GLR} and \emph{optimal order discretization error bounds} are derived.

\subsection{Implementation}
We comment on  an efficient implementation of the isoparametric TraceFEM. The integrals in \eqref{discriso} can be evaluated based on numerical integration rules with respect to $\Gammalin$ and the transformation $\Theta_h$. We illustrate this for the Laplacian part in the bilinear form. With $\tilde{u}_h = u_h \circ \Theta_h,~\tilde{v}_h = v_h \circ \Theta_h \in V_h^{\rm bulk}:=V_{h,m}$, there holds
\begin{equation} \label{implemen}
\int_{\Gamma_h} \nabla_{\Gamma_h}  u_h \cdot \nabla_{\Gamma_h}  v_h \, \rd{s_h} =
\int_{\Gammalin}  P_h (D \Theta_h)^{-T} \nabla \tilde{u}_h \cdot
\ P_h (D \Theta_h)^{-T} \nabla \tilde{v}_h\,  \mathcal{J}_\Gamma\, \rd{\tilde s_h},
\end{equation}
where $P_h = I - n_h n_h^T$ is the tangential projection, $n_h = N / \|N\|$ is the unit-normal on $\Gamma_h$, $N = (D \Theta_h)^{-T} \hat{n}_h$,  $\hat{n}_h = \nabla \hphi / \Vert\nabla \hphi\Vert$ is the normal with respect to $\Gammalin$, and $\mathcal{J}_\Gamma = \det(D \Phi_h) \Vert N \Vert$.
This means that one only needs  an accurate integration with respect to the low order geometry $\Gammalin$ and the explicitly available mesh transformation $\Theta_h \in [V_{h,k}]^3$. The terms occurring in  the integral on the right-hand side in \eqref{implemen}
are polynomial functions on each triangle element of $\Gammalin$.

We emphasize that taking $V_{h,\Theta}$ in place of  $V_{h,m}$ in \eqref{discriso} is important. For $V_{h,m}$ it is {not} clear how an efficient implementation can be realized. In that case one  needs to integrate over $\Gamma_T:=\Gammalin \cap T$ (derivatives of) the function $u_h \circ \Theta_h$, where $u_h$ is piecewise polynomial on $T \in \T_h$. Due to the transformation $\Theta_h \in [V_{h,k}]^3$ the function $u_h \circ \Theta_h$ has in  general not more than only Lipschitz smoothness on $\Gamma_T$. Hence an efficient and accurate quadrature becomes a difficult issue.
\begin{remark}[Numerical experiments] \label{remnumexp1} \rm Results of numerical experiments with the TraceFEM for $P_1$ finite elements ($m=1$) and a piecewise linear surface approximation ($k=1$) are given in \cite{ORG09}. Results for the higher order isoparametric TraceFEM are given in \cite{GLR}. In that paper, results of numerical  experiments with that method for $1 \leq k=m \leq 5$ are presented which confirm  the optimal high order convergence.

\end{remark}

\section{Matrix-vector representation and stabilizations} \label{sectStiffness}
The matrix-vector representation of the discrete problem in the  TraceFEM depends on the choice of a basis (or frame) in the trace finite element space. The most natural choice is to use the nodal basis of the outer space $V_{h,m}$ for representation of elements in the trace space $V_{h}^\Gamma$. This choice has been used in almost all papers on TraceFEM. It, however, has some  consequences. Firstly, in general the restrictions to $\Gamma_h$ of the outer nodal basis functions on $\T_h^\Gamma$ are {not} linear independent. Hence, these functions only form a frame and not a basis of the trace finite element space, and the corresponding mass matrix is singular. Often, however, the kernel of the mass matrix can be identified, and for $V_{h,1}$ elements it can be only one dimensional. Secondly, if one considers the scaled mass matrix on the space orthogonal to its kernel, the spectral condition number is typically {not} uniformly bounded with respect to $h$, but shows an $\mathcal{O}(h^{-2})$ growth. Clearly, this is different from the
standard uniform boundedness property of mass matrices in finite element discretizations. Thirdly, both for the mass and stiffness matrix there is a dependence of the condition numbers  on the location of the approximate interface $\Gamma_h$ with respect to the outer triangulation. In certain ``bad intersection cases'' the condition numbers can blow up.
A numerical illustration of some of these effects is given in  \cite{OR08}. Results of numerical experiments indicate that even for diagonally re-scaled (normalized) mass and stiffness matrices condition numbers become very large if higher order trace finite elements are used.

Clearly, the situation described above   concerning the conditioning of mass and stiffness matrices in the TraceFEM is not completely satisfactory, especially if a higher order method is of interest. In  recent literature  several \emph{stabilization methods} for TraceFEM have been introduced.
In these methods a stabilizing term is added to the bilinear form that results from the surface PDE (for example, the one in \eqref{discr1}). This stabilization term is designed to preserve the optimal discretization error bounds and at the same time ensure that the resulting mass and stiffness matrix have the full rank (apart from the kernel of Laplace--Beltrami operator) and have condition numbers $ch^{-2}$ with a constant $c$ that is \emph{in}dependent of how $\Gamma_h$ intersects the volume triangulation $\T_h$. Below we discuss the most important of these stabilization methods. All these methods are characterized by a bilinear form denoted by $s_h(\cdot,\cdot)$, and the stabilized discrete problem uses the same finite element space as the unstabilized one, but with a modified bilinear form
\begin{equation} \label{defAh}
  A_h(u,v):=a_h(u,v)+ s_h(u,v).
\end{equation}


\noindent\textbf{Ghost penalty stabilization.} \label{sec:ghostpen}
The ``ghost penalty'' stabilization is introduced in \cite{Burman2010} as a stabilization mechanism for unfitted finite element discretizations. In \cite{Alg1}, it is applied to a trace finite element discretization of the Laplace--Beltrami equation with piecewise linear finite elements ($m=k=1$).  For the ghost penalty stabilization, one considers the set of faces \emph{inside} $\omega_h$, $\mathcal{F}^\Gamma := \{ F =  \overline{T}_a \cap \overline{T}_b; T_a,T_b \in \mT^\Gamma, {\rm meas}_{2}(F) > 0\}$ and defines the face-based  bilinear form
$$
s_h(u_h,v_h) = \rho_s \sum_{F \in \mathcal{F}^\Gamma} \int_F \jump{\nabla u_h \cdot n_h} \jump{\nabla v_h \cdot n_h} \,\rd{s_h},
$$
with a stabilization parameter $\rho_s > 0$, $\rho_s \simeq 1$, $n_h$ is the normal to the face $F$ and $\jump{\cdot}$ denotes the jump of a function over the interface.
In \cite{Alg1}  it is shown that for piecewise linear finite elements, the stabilized problem results in a stiffness matrix (for the Laplace--Beltrami problem) with  a uniformly bounded condition number $\mathcal{O}(h^{-2})$.

Adding the jump of the derivatives on the element-faces changes, however, the sparsity pattern of the stiffness matrix. The face-based terms enlarge the discretization stencils.
To our knowledge, there is no higher order version of the ghost penalty method for surface PDEs which provides a uniform bound on the condition number.

\noindent\textbf{Full gradient surface stabilization}. \label{sec:fullgradsurf}
The ``full gradient'' stabilization is a method which does not rely on face-based terms and keeps the sparsity pattern intact. It was introduced in \cite{deckelnick2014unfitted,reusken2015}.  The bilinear form which describes this stabilization  is
\begin{equation} \label{fullgradient}
 s_h(u_h,v_h):= \int_{\Gamma_h} \nabla u_h \cdot n_h \, \nabla v_h \cdot n_h \, \rd{s_h},
\end{equation}
where $n_h$ denotes the normal to $\Gamma_h$.
Thus, we get $A_h(u_h,v_h)=\int_{\Gamma_h}( \nabla u_h \cdot \nabla v_h + u_h v_h)\, \rd{s_h}$, which explains the name of the method.
The stabilization is very easy to implement.

For the case of linear finite elements, it is shown in \cite{reusken2015} that one has a uniform condition number bound $\mathcal{O}(h^{-2})$ for diagonally re-scaled mass and stiffness matrices.
For the case of higher order TraceFEM ($m >1$), full gradient stabilization does not result in a uniform bound on the condition number, cf. \cite[Remark 6.5]{reusken2015}.

\noindent\textbf{Full gradient volume stabilization}. \label{sec:fullgradvol}
Another ``full gradient'' stabilization was introduced in \cite{burman2016full}. It uses the full gradient in the volume instead of only on the surface. The stabilization bilinear form is
$$
 s_h(u_h,v_h) = \rho_s \int_{\OGamma_{\Theta}} \nabla u_h \cdot \nabla v_h \rd{x},
$$
with a stabilization parameter $\rho_s > 0$, $\rho_s \simeq h$.
For $\Gamma_h= \Gammalin$ the domain $\OGamma_{\Theta}$ is just the union of tetrahedra intersected by $\Gamma_h$.
For application to an isoparametric TraceFEM as treated in section~\ref{sectGammaapprox} one should use the transformed domain $\OGamma_{\Theta}:=\Theta_h(\omega_h^{\rm lin})$.  This method is easy to implement as its bilinear form is provided by most finite element codes.
Using the analysis from \cite{burman2016full,GLR} it can be shown that a uniform condition number bound $\mathcal{O}(h^{-2})$ holds not only for linear finite elements but also for the higher order isoparametric TraceFEM. However, the stabilization is not ``sufficiently consistent'', in the sense that for the stabilized method one does {not} have the optimal order discretization bound for $m>1$.

\noindent\textbf{Normal derivative volume stabilization}.
In the lowest-order case $m=1$, all three stabilization methods discussed above result in a discretization which has a discretization error of optimal order \emph{and} a stiffness matrix with a uniform $\mathcal{O}(h^{-2})$ condition number bound. However, none of these methods  has both properties also for $m >1$.
We now discuss a recently introduced stabilization method \cite{burman2016cutc,GLR}, which does have both properties  for arbitrary $m \geq 1$. Its bilinear form is given by
\begin{equation}\label{eq:def-normal-gradient-volume-stab}
 s_h(u_h,v_h):= \rho_s \int_{\OGamma_\Theta} n_h \cdot \nabla u_h  \, n_h \cdot \nabla v_h \, \rd{x},
\end{equation}
with $\rho_s>0$ and $n_h$ the normal to $\Gamma_h$, which can easily be determined.
This is a natural variant of the full gradient stabilizations treated above. As in the full gradient surface stabilization only normal derivatives are added, but this time (as in the full gradient volume stabilization) in the volume $\OGamma_\Theta$.
The implementation of this stabilization term is fairly simple as it fits well into the structure of many finite element codes.
It can be shown, see \cite{GLR}, that using this stabilization in  the isoparametric TraceFEM  one obtains, for arbitrary $m=k \geq 1$, optimal order discretization bounds, and the resulting stiffness matrix has a spectral condition number $c h^{-2}$, where the constant $c$ does not depend on the position of the surface approximation $\Gamma_h$ in the triangulation $\T_h$. The bounds were proved in the $H^1$ norm, but do not foresee difficulties in showing the optimal error bounds in the $L^2$ norm as well. 
For these results to hold, one can take the scaling parameter $\rho_s$ from the following (very large) parameter range:
\begin{equation}\label{eq:as-rho}
  h \lesssim \rho_s \lesssim h^{-1}.
\end{equation}
Results of numerical experiments which illustrate the dependence of discretization errors and condition numbers on $\rho_s$ are given in \cite{GLR}. 
\section{Discretization error analysis}  \label{sectanalysis}
In this section we present a general framework in which both optimal order discretization bounds can be established and the conditioning of the resulting stiffness matrix can be analyzed. Our exposition follows the papers \cite{reusken2015,GLR}. In this framework we  need certain ingredients such as approximation error bounds for the finite element spaces, consistency estimates for the geometric error and certain fundamental properties of the stabilization. The required results are scattered in the literature and can be found in many papers, some of which we refer to below.

For the discretization we need an approximation $\Gamma_h$ of $\Gamma$. We do not specify a particular construction for $\Gamma_h$ at this point, but only assume certain properties introduced in section~\ref{sectprelim} below. This $\Gamma_h$  may, for example, be constructed via a mapping $\Theta_h$ as section~\ref{sectGammaapprox}, i.e., $\Gamma_h= \Theta_h(\Gammalin)$ or it may be characterized as the zero level of a (higher order) level set function $\phi_h$, cf. \eqref{eq:Gamma_h}. In the latter case, to perform quadrature on $\Gamma_h$  one does not use any special transformation but applies a ``direct'' procedure, e.g., a subpartition technique or the moment-fitting method. This difference (direct access to $\Gamma_h$ or access via $\Theta_h$) has to be taken into account in the definition of the trace spaces. We want to present an analysis which covers both cases and therefore
 we introduce a local bijective mapping $\Phi_h$, which is either $\Phi_h=\Theta_h$ ($\Gamma_h$ is accessed via transformation $\Theta_h$), cf.~\eqref{isoFEspace},  or $\Phi_h={\rm id}$ (direct access to $\Gamma_h$) and define
\begin{equation*}  \label{isoFEspaceA}
 V_{h,\Phi}  := \{\,  (v_h \circ \Phi_h^{-1})_{|\OGamma_\Phi} \mid v_h \in V_{h,m}\, \},
 \end{equation*}
where $\OGamma_\Phi$ is the domain formed by all (transformed) tetrahedra that are intersected by $\Gamma_h$.

We consider the bilinear form $A_h$ from \eqref{defAh}
with a general symmetric positive semidefinite bilinear form $s_h(\cdot,\cdot)$. Examples of $s_h(\cdot,\cdot)$ are $s_h\equiv 0$ (no stabilization) and the ones discussed in section~\ref{sectStiffness}. 
The discrete problem is as follows:
Find $u_h \in \Vk$ such that
\begin{equation} \label{eq:varform}
 A_h(u_h,v_h) = \int_{\Gamma_h} f_h v_h \, \rd{s_h} \quad \text{ for all } v_h \in \Vk.
\end{equation}
In the sections below we present a general framework for discretization error analysis of this method and outline main
results. Furthermore the conditioning of the resulting stiffness matrix is studied.
\subsection{Preliminaries} \label{sectprelim}
We collect some notation and results that we need in the error analysis. We always assume that $\Gamma$ is sufficiently smooth without specifying the regularity of $\Gamma$. The signed distance function to $\Gamma$ is denoted by $d$, with $d$ negative in  the interior of $\Gamma$.
On
$U_\delta: = \{\, x \in  \R^3\,:\, |d(x)| < \delta\, \}$,
 with $\delta >0$ sufficiently small, we define
\begin{align}
 & n(x)=\nabla d(x),  ~H(x)=D^2 d(x), ~~P(x)=I-n(x)n(x)^T, \\
 & p(x)= x- d(x)n(x), ~~ v^e(x)=v(p(x))~~\text{for $v$ defined on $\Gamma$}. \label{notat}
\end{align}
The eigenvalues of $H(x)$ are denoted by $\kappa_1(x),\kappa_2(x)$ and 0. Note that $v^e$ is simply the constant extension of $v$ (given on $\Gamma$) along the normals $n$. The tangential derivative can be written as $\unabla g(x)= P(x) \nabla g(x)$ for
$x \in \Gamma$. We assume $\delta_0 >0$ to be sufficiently small such that on $U_{\delta_0}$ the decomposition
\[
x= p(x)+ d(x) n(x)
\]
is unique for all $x \in U_{\delta_0}$. In the remainder we only consider $U_\delta$ with $0<  \delta \leq \delta_0$.
In the analysis we use basic transformation formulas  (see,  e.g.,\cite{Demlow06}). For example:
\begin{align}
\nabla u^e (x) & = (I -  d(x) H(x))\unabla u(p(x)) \quad \text{a.e on}~~U_{\delta_0}, ~u \in H^1(\Gamma).
\label{nabla1}
\end{align}
Sobolev norms of $u^e$ on $U_\delta$ are related to corresponding norms on $\Gamma$. Such results are known in the literature, e.g. \cite{Dziuk88,Demlow06}. We will use the following result:
\begin{lemma} \label{lemma1}  For $\delta >0$ sufficiently small the following holds.
For all $u \in H^m(\Gamma)$:
\begin{align}
\|D^\mu u^e\|_{L^2(U_\delta)} & \le c \sqrt{\delta}\|u\|_{H^m(\Gamma)}, \quad |\mu|=m \geq 0,\label{H2}
\end{align}
with a constant $c$ independent of $\delta$ and $u$.
\end{lemma}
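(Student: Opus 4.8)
The plan is to work in the normal (tubular) coordinates around $\Gamma$ and thereby reduce the volume integral over $U_\delta$ to a surface integral on $\Gamma$ multiplied by an integral over the normal fibre of length $2\delta$; the factor $\sqrt{\delta}$ will come out of this fibre integration. Concretely, for $\delta \le \delta_0$ small the map $\Psi(y,r) := y + r\,n(y)$, $(y,r)\in \Gamma\times(-\delta,\delta)$, is a diffeomorphism onto $U_\delta$ whose Jacobian is $J(y,r)=\prod_{i=1}^{2}\bigl(1-r\,\kappa_i(y)\bigr)$, the product running over the principal curvatures of $\Gamma$. Since these are bounded, shrinking $\delta_0$ if needed gives $\tfrac12\le J(y,r)\le 2$ for $|r|\le\delta$, and the change of variables yields
\begin{equation*}
\|D^\mu u^e\|_{L^2(U_\delta)}^2 = \int_{-\delta}^{\delta}\!\!\int_\Gamma \bigl|D^\mu u^e(\Psi(y,r))\bigr|^2\, J(y,r)\,\rd s\, \rd r .
\end{equation*}

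I would first dispose of the base case $m=0$: because $u^e$ is constant along each normal fibre, $u^e(\Psi(y,r))=u(y)$, so the inner integrand is $|u(y)|^2$, independent of $r$; integrating over $r$ produces the factor $\le 2\delta$ and gives $\|u^e\|_{L^2(U_\delta)}^2\le c\,\delta\,\|u\|_{L^2(\Gamma)}^2$. For $m\ge 1$ the heart of the matter is the pointwise estimate
\begin{equation*}
\bigl|D^\mu u^e(x)\bigr|^2 \le c \sum_{j=0}^{m} \bigl|\unabla^{\,j} u(p(x))\bigr|^2, \qquad x \in U_{\delta_0},\ |\mu|=m,
\end{equation*}
where $\unabla^{\,j}$ denotes the iterated tangential derivative and the constant $c$ depends only on the geometry of $\Gamma$. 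The decisive feature is that the right-hand side depends on $x$ only through $p(x)=y$ and is therefore constant along each normal fibre, exactly as in the case $m=0$.

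To obtain this pointwise bound I would iterate the first-order transformation formula \eqref{nabla1}. Differentiating $u^e = u\circ p$ repeatedly by the chain rule (Faà di Bruno) produces tangential derivatives $\unabla^{\,j}u$ of order $j\le m$ evaluated at $p(x)$, multiplied by coefficient tensors assembled from derivatives of the projection $p(x)=x-d(x)n(x)$. As $p$ is built from $d$ and its derivatives, these coefficients are polynomials in derivatives of $d$ of order up to $m+1$; since $\Gamma$ is smooth, $d$ is smooth on $U_{\delta_0}$ and all such coefficients are bounded there, uniformly in $x$. Substituting the pointwise bound into the change-of-variables identity, the inner $r$-integral again only contributes the factor $\le 2\delta$, while the surface integral that remains is $\int_\Gamma\sum_{j\le m}|\unabla^{\,j}u|^2\,\rd s$, equivalent to $\|u\|_{H^m(\Gamma)}^2$. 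Collecting constants gives $\|D^\mu u^e\|_{L^2(U_\delta)}^2 \le c\,\delta\,\|u\|_{H^m(\Gamma)}^2$, and taking square roots yields the claim with $c$ independent of $\delta$ and $u$.

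I expect the main obstacle to be precisely this pointwise derivative bound, and specifically the book-keeping that ensures (i) \emph{only} tangential derivatives of $u$ appear — full ambient derivatives would be meaningless for a function living on $\Gamma$ — and (ii) every geometric factor is visibly a bounded function of $d$ and its derivatives on $U_{\delta_0}$. The cleanest route is an induction on $m$ with \eqref{nabla1} as the base step, combined with the analogous transformation rule for tangential derivatives; no deep estimate is involved, but the careful tracking of the curvature-dependent coefficients is where the real work lies.
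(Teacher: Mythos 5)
Your proposal is correct and follows essentially the same route as the standard argument the paper defers to (it cites \cite{Dziuk88,Demlow06} rather than proving the lemma): the tubular-coordinate change of variables $y+r\,n(y)$ with bounded Jacobian, the observation that $u^e$ is constant along normal fibres so the $r$-integration yields the factor $\delta$, and the iterated use of the transformation formula \eqref{nabla1} to bound ambient derivatives of $u^e$ pointwise by tangential derivatives of $u$ at $p(x)$ with geometric coefficients bounded on $U_{\delta_0}$. The only detail worth adding is the routine density step: carry out the pointwise chain-rule computation for smooth $u$ and extend to $u\in H^m(\Gamma)$ by approximation.
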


\subsection{Assumptions on surface approximation $\Gamma_h$}
We already discussed some properties of $\Gamma_h$ defined in \eqref{eq:Gamma_h} and \eqref{Ghh}. In this section we formulate more precisely the properties of a generic discrete surface $\Gamma_h$ required  in the  error analysis.

The surface approximation $\Gamma_h$  is assumed to be a closed connected Lipschitz manifold.  It can be partitioned as follows:
\[
  \Gamma_h = \bigcup\limits_{T \in \T_h^\Gamma} \Gamma_T, ~~ \Gamma_T:= \Gamma_h \cap T.
\]
The unit normal (pointing outward from the interior of $\Gh$) is denoted by $n_h(x)$, and is defined a.e. on $\Gh$.
The first assumption is rather mild.
\begin{assumption} \label{ass1} \rm (A1) We assume that there is a constant $c_0$ independent of $h$ such that for  the  domain $\omega_h$ we have
\begin{equation} \label{cond1}
  \omega_h \subset U_{\delta}, \quad \text{with}~~ \delta =c_0 h \leq \delta_0.
\end{equation}
(A2)  We assume that for each $T \in \T_h^\Gamma$ the local surface section  $\Gamma_T$ consists of simply connected parts $\Gamma_T^{(i)}$, $i=1,\ldots p$,
and $\|n_h(x)-n_h(y)\| \leq c_1 h$ holds for $x,y \in \Gamma_T^{(i)},~i=1,\ldots p$. The number $p$ and constant $c_1$ are uniformly bounded w.r.t. $h$ and $T\in \T_h$.
\end{assumption}
\begin{remark} \label{remass1} \rm
 The condition (A1) essentially means that ${\rm dist}(\Gamma_h,\Gamma) \leq c_0 h$ holds, which is a very mild condition on the accuracy of $\Gh$ as an approximation of $\Gamma$. The condition ensures that the local triangulation $\T_h^\Gamma$ has sufficient resolution for  representing the  surface $\Gamma$ approximately. The condition (A2) allows multiple intersections (namely $p$) of $\Gamma_h$ with one tetrahedron $T \in \T_h^\Gamma$, and requires a (mild) control on the normals of the surface approximation. We discuss three situations in which Assumption~\ref{ass1} is satisfied. For the case $\Gamma_h =\Gamma$ and with $h$ sufficiently small both conditions in Assumption~\ref{ass1} hold.  If  $\Gamma_h$ is a shape-regular triangulation, consisting of triangles with diameter $\mathcal{O}(h)$ and vertices on $\Gamma$, then for $h$ sufficiently small both conditions are also  satisfied. Finally, consider the case in which $\Gamma$ is the zero level of a smooth level set function $\phi$, and $\phi_h$ is a
finite element approximation of $\phi$ on the triangulation $\T_h$. If $\phi_h$ satisfies \eqref{eq:approx_phi} with $k=1$ and  $\Gamma_h$
 is the zero level of $\phi_h$, see \eqref{eq:Gamma_h},  then the conditions (A1)--(A2) are satisfied, provided $h$ is sufficiently small.
\end{remark}
\medskip

For the analysis of the \emph{approximation error} in the TraceFEM one only needs Assumption~\ref{ass1}. For this  analysis, the following result is  important.
\begin{lemma} \label{LemHansbo}
 Let {\rm (A2)} in Assumption~\ref{ass1} be satisfied. There exist  constants $c$, $h_0 >0$, independent of how $\Gamma_h$ intersects $\T_h^\Gamma$, and with $c$ independent of $h$, such that for $h \leq h_0$ the following holds. For all $T \in \T_h^\Gamma$ and all $v \in H^1(T)$:
\begin{equation} \label{inHansbo}
 \|v\|_{L^2(\Gamma_T)}^2 \leq c \big( h_T^{-1} \|v\|_{L^2(T)}^2 + h_T \|\nabla v\|_{L^2(T)}^2\big),
\end{equation}
with $h_T:={\rm diam}(T)$.
\end{lemma}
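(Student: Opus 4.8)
The statement is a local trace inequality on each tetrahedron $T \in \T_h^\Gamma$, bounding the $L^2$-norm of $v$ over the surface piece $\Gamma_T$ by a scaled combination of the bulk $L^2$-norm of $v$ and $\nabla v$ over $T$. Let me think about how I'd prove this.

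The key feature here is the robustness: the constant $c$ must be independent of *how* $\Gamma_h$ cuts through $T$, which is what makes this a nontrivial "Hansbo-type" estimate rather than a standard trace theorem. A standard trace inequality $\|v\|_{L^2(\partial T)} \le c(h_T^{-1/2}\|v\|_{L^2(T)} + h_T^{1/2}\|\nabla v\|_{L^2(T)})$ holds for the *boundary* of $T$, but here $\Gamma_T$ is an arbitrary planar-ish cut through the interior, and the difficulty is that $\Gamma_T$ could be a tiny sliver or pass near a vertex. So the naive approach of applying a scaling argument directly fails because the reference configuration changes with the cut.

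Here's the plan. First, by scaling $T$ to a reference tetrahedron $\hat T$ of unit size (using the shape-regularity of $\T_h$, which gives $h_T \sim h$ and controls the Jacobians), it suffices to prove the inequality on $\hat T$ with the surface piece $\hat\Gamma = \hat\Gamma_T$ having a normal that varies by at most $O(h)$ — i.e., $\hat\Gamma$ is *nearly planar* by assumption (A2). The scaling converts the $h_T^{-1}$ and $h_T$ weights into the clean unit-scale inequality $\|v\|_{L^2(\hat\Gamma)}^2 \le c(\|v\|_{L^2(\hat T)}^2 + \|\nabla v\|_{L^2(\hat T)}^2)$. The heart of the matter is proving this on the reference element with a constant independent of the position of the cut.

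\textbf{The main obstacle.} The crux is handling the arbitrary position of $\hat\Gamma$ within $\hat T$ uniformly. I would split $\hat\Gamma$ into its simply connected nearly-planar pieces $\hat\Gamma^{(i)}$ (there are at most $p$ of them by (A2)) and treat each separately. For a single nearly-planar piece, the strategy is to choose a direction $e$ roughly normal to the piece and integrate along lines parallel to $e$: for a function $v \in H^1$, the fundamental theorem of calculus gives
\begin{equation*}
 v(x)^2 = v(x_0)^2 + \int \frac{\rd}{\rd t} v(x_0 + t e)^2 \, \rd t,
\end{equation*}
where $x_0$ lies on a fixed reference face of $\hat T$. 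Integrating this representation over $\hat\Gamma^{(i)}$ and using that each line through $\hat\Gamma^{(i)}$ stays inside $\hat T$ with bounded length lets one dominate $\|v\|_{L^2(\hat\Gamma^{(i)})}^2$ by $\|v\|_{L^2(\hat F)}^2 + \|v\|_{L^2(\hat T)}\|\nabla v\|_{L^2(\hat T)}$ for a suitable face $\hat F$, with constants depending only on the geometry of $\hat T$ and on the fact (from (A2)) that $\hat\Gamma^{(i)}$ is nearly normal to $e$. The almost-constant normal ensures the change of variables between $\hat\Gamma^{(i)}$ and its projection onto $\hat F$ has bounded, nondegenerate Jacobian \emph{uniformly in the cut position}; this is precisely where (A2) is essential and is the step requiring the most care.

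Finally I would absorb the face term $\|v\|_{L^2(\hat F)}^2$ using the ordinary trace inequality on the fixed reference tetrahedron $\hat T$, apply a Cauchy--Schwarz/Young inequality to the cross term, sum over the finitely many pieces $i = 1,\ldots,p$ (with $p$ uniformly bounded), and scale back to $T$. The scaling reintroduces the weights $h_T^{-1}$ and $h_T$ with the correct powers, yielding \eqref{inHansbo}. Shape regularity guarantees all hidden constants depend only on the reference element and on $c_1, p$ from (A2), hence are independent of $h$ and of the intersection pattern, as required.
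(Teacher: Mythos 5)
The first thing to note is that the paper does not prove Lemma~\ref{LemHansbo} at all: directly after the statement it refers to \cite{Hansbo02}, where the inequality was introduced and proved under a somewhat more restrictive assumption, and to \cite{Hansbo04,reusken2015,ChernOlsh1} for proofs under assumptions comparable to (A2). Your sketch is a correct reconstruction of precisely that standard Hansbo-type argument: reduce to unit scale using shape regularity, split $\Gamma_T$ into the $p$ nearly planar pieces provided by (A2), choose a constant direction $e$ transversal to each piece, integrate $v^2$ along lines parallel to $e$ up to their exit points on the element boundary, control the boundary term by the ordinary trace inequality on the fixed reference tetrahedron and the bulk term by Cauchy--Schwarz and Young, then sum over the (uniformly many) pieces and scale back to recover the weights $h_T^{-1}$ and $h_T$ in \eqref{inHansbo}.

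Two details deserve more care than your outline gives them, though neither is fatal. First, the exit points of the lines do not lie on a single ``fixed reference face'' $\hat F$: depending on the cut position they are distributed over all of $\partial \hat T$, so the face term should simply be $\|v\|_{L^2(\partial \hat T)}^2$; the argument is unchanged. Second, the change of variables you invoke needs that each line parallel to $e$ meets a given piece $\Gamma_T^{(i)}$ at most once. Near-constancy of the normal by itself does not formally exclude a gently sloped multi-sheeted patch; what saves you is that $n_h$ in (A2) is the \emph{oriented} (outward) normal, so consecutive crossings of a line with the closed surface $\Gamma_h$ carry nearly opposite normals and hence cannot belong to the same piece $\Gamma_T^{(i)}$. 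This gives the injectivity and the uniform Jacobian bound $|e \cdot n_h| \geq 1/2$ that you need. You could also sidestep this issue entirely with a variant used in parts of the cited literature: apply the divergence theorem to $\Div (v^2 w)$, with $w$ a constant vector satisfying $w \cdot n_h \geq 1/2$ on $\Gamma_T^{(i)}$, over the subdomain of $T$ cut off by $\Gamma_T^{(i)}$; this yields $\|v\|_{L^2(\Gamma_T^{(i)})}^2 \leq c\big(\|v\|_{L^2(\partial T)}^2 + \|v\|_{L^2(T)}\|\nabla v\|_{L^2(T)}\big)$ directly, after which the standard boundary trace inequality and Young's inequality finish the proof exactly as in your last step.
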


The inequality \eqref{inHansbo} was introduced in \cite{Hansbo02}, where one also finds a proof under a somewhat more restrictive assumption. Under various (similar) assumptions, a proof of the estimate in \eqref{inHansbo} or of very closely related ones is found in \cite{Hansbo04,reusken2015,ChernOlsh1}.
For deriving higher order \emph{consistency bounds for the geometric error} we need a further more restrictive assumption introduced below.
\begin{assumption} \label{ass2}
\rm Assume that $\Gamma_h \subset U_{\delta_0} $ and that the projection $p:\,\Gamma_h \to \Gamma$ is a bijection.  We assume that the following holds, for a $k \geq 1$:
\begin{align} \|d\|_{L^\infty(\Gh)} & \leq c h^{k+1}, \label{condA} \\
 \|n -n_h\|_{L^\infty(\Gh)} & \leq c h^{k}.  \label{condB}
\end{align}
\end{assumption}

Clearly, if $\Gamma_h=\Gamma$ there is no geometric error, i.e.  \eqref{condA}--\eqref{condB} are fulfilled with $k=\infty$.
If $\Gamma_h$ is defined as  in \eqref{eq:Gamma_h}, and \eqref{eq:approx_phi} holds, then the conditions \eqref{condA}--\eqref{condB} are satisfied with the same $k$ as in \eqref{eq:approx_phi}. In \cite{Demlow09} another method for constructing polynomial approximations to $\Gamma$ is presented that satisfies the conditions \eqref{condA}--\eqref{condB} (cf. Proposition 2.3 in \cite{Demlow09}). In that method the exact distance function to $\Gamma$ is needed. Another method, which does not need information about the exact distance function, is introduced in \cite{Grande2014}.  A further  alternative is the method presented in section~\ref{sectGammaapprox}, for which it also can be shown that the conditions \eqref{condA}--\eqref{condB} are satisfied.

The surface measures  on $\Gamma$ and $\Gamma_h$ are related through the identity
\begin{equation} \label{surfmeasuer}
 \mu_h ds_h(x) = ds(p(x)), \quad \text{for}~~ x \in \Gamma_h.
\end{equation}
If Assumption~\ref{ass2} is satisfied the estimate
\begin{equation} \label{estmu}
 \|1- \mu_h\|_{\infty,\Gamma_h} \lesssim h^{k+1}
\end{equation}
holds, cf. \cite{Demlow06,reusken2015}.

\subsection{Strang Lemma} \label{sectStrang}
In the error analysis of the method we also need the following larger (infinite dimensional) space:
$$
 \Vregh := \{ v \in H^1(\OGamma_{\Phi})\,:\, v|_{\Gamma_h}  \in H^1(\Gamma_h)\} \supset \Vk,
$$
on which the bilinear form $A_h(\cdot,\cdot)$ is  well-defined. The natural (semi-)norms that we use in the analysis are
\begin{equation}\label{eq:def-h-norm}
  \enormh{u}^2 :=\|u\|_a^2 + s_h(u,u), \quad \|u\|_a^2:=a_h(u,u),  \quad u \in \Vregh.
\end{equation}

\begin{remark} \rm
On $V_{h,\Phi}^\Gamma$ the semi-norm $\|\cdot\|_a$ defines a norm.  Therefore, for a solution $u_h\in\Vk$ of the discrete problem \eqref{eq:varform}, the trace $u_h|_{\Gamma_h}\in V_{h,\Phi}^\Gamma$ is unique. The uniqueness of $u_h\in\Vk$ depends on the stabilization term and will be addressed in Remark~\ref{uniquesol} below.
\end{remark}

The following Strang Lemma is the basis for the error analysis. This basic result is used in almost all error analysis of TraceFEM and can be found in many papers. Its proof is elementary.
\begin{lemma} \label{Strang} Let $u \in H^1_0(\Gamma)$ be the unique solutions of \eqref{eq:weak-LB} with the extension $u^e \in \Vregh$ and let $u_h \in \Vk$ be a solution of \eqref{eq:varform}. Then we have the discretization error bound
\begin{equation} \label{Strangbound}
 \enormh{u^e-u_h} \leq 2 \min_{v_h \in \Vk} \enormh{u^e - v_h} + \sup_{w_h \in  \Vk} \frac{|A_h(u^e,w_h)-\int_{\Gamma_h} f_h w_h \, \rd{s_h}|}{\enormh{w_h}}.
\end{equation}
\end{lemma}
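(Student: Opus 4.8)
The plan is to establish the Strang-type estimate \eqref{Strangbound} by the standard argument that splits the total error into an approximation (best-approximation) part and a consistency (variational crime) part. First I would pick an arbitrary $v_h \in \Vk$ and insert it, writing the error as $u^e - u_h = (u^e - v_h) + (v_h - u_h)$. By the triangle inequality for the seminorm $\enormh{\cdot}$,
\[
  \enormh{u^e - u_h} \leq \enormh{u^e - v_h} + \enormh{v_h - u_h}.
\]
The crux is to bound the second, fully discrete term $\enormh{v_h - u_h}$ using the discrete equation \eqref{eq:varform} together with the coercivity expressed through the definition of the norm in \eqref{eq:def-h-norm}.

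Next I would exploit that on the discrete space the form $A_h$ is coercive in the norm $\enormh{\cdot}$: by definition $\enormh{w}^2 = \|w\|_a^2 + s_h(w,w) = a_h(w,w) + s_h(w,w) = A_h(w,w)$, so in fact $A_h(w,w) = \enormh{w}^2$ exactly, with no constant. Setting $w_h := v_h - u_h \in \Vk$ and using this identity,
\[
  \enormh{w_h}^2 = A_h(w_h, w_h) = A_h(v_h - u^e, w_h) + A_h(u^e - u_h, w_h).
\]
The first term is controlled by continuity of $A_h$ in the $\enormh{\cdot}$ norm (Cauchy--Schwarz for the semidefinite form), giving $A_h(v_h - u^e, w_h) \leq \enormh{v_h - u^e}\,\enormh{w_h}$. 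For the second term I would replace $A_h(u_h, w_h)$ by the right-hand side $\int_{\Gamma_h} f_h w_h\,\rd s_h$ using that $u_h$ solves \eqref{eq:varform} with test function $w_h \in \Vk$, so that
\[
  A_h(u^e - u_h, w_h) = A_h(u^e, w_h) - \int_{\Gamma_h} f_h w_h \, \rd s_h.
\]
This residual is exactly the numerator appearing in the consistency supremum in \eqref{Strangbound}.

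Dividing the resulting inequality $\enormh{w_h}^2 \leq \enormh{v_h - u^e}\,\enormh{w_h} + \big|A_h(u^e,w_h) - \int_{\Gamma_h} f_h w_h\,\rd s_h\big|$ by $\enormh{w_h}$ (assuming it is nonzero; otherwise the bound is trivial) and bounding the last factor by the supremum over all $w_h \in \Vk$, I obtain
\[
  \enormh{w_h} \leq \enormh{u^e - v_h} + \sup_{w_h \in \Vk} \frac{\big|A_h(u^e,w_h) - \int_{\Gamma_h} f_h w_h\,\rd s_h\big|}{\enormh{w_h}}.
\]
Combining with the triangle inequality from the first step yields $\enormh{u^e - u_h} \leq 2\,\enormh{u^e - v_h} + (\text{consistency sup})$, and since $v_h \in \Vk$ was arbitrary I pass to the minimum over $v_h$ to recover \eqref{Strangbound}. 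The only subtle point, and the main thing to watch, is that $\enormh{\cdot}$ is merely a seminorm on $\Vregh$; the argument only uses the Cauchy--Schwarz inequality for the symmetric positive semidefinite form $A_h$ and the exact identity $A_h(w,w) = \enormh{w}^2$, both of which hold for a seminorm, so no genuine obstacle arises — the result is, as the authors note, elementary.
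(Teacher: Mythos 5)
Your proof is correct and is precisely the elementary Strang-type argument the paper has in mind (the paper itself omits the proof, noting only that it is elementary and standard): triangle inequality, the exact identity $A_h(w,w)=\enormh{w}^2$ on $\Vk$, Cauchy--Schwarz for the symmetric positive semidefinite form $A_h$, and the discrete equation to expose the consistency residual. You also correctly handle the two subtleties — the seminorm character of $\enormh{\cdot}$ on $\Vregh$ and the trivial case $\enormh{v_h-u_h}=0$ — so nothing is missing.
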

\subsection{Approximation error bounds} \label{sectapprox} In the approximation error analysis one derives bounds for the first term on the right-hand side in \eqref{Strangbound}. Concerning the quality of the approximation $\Gamma_h \sim \Gamma$ one needs only  Assumption~\ref{ass1}. Given the mapping $\Phi_h$, we define the (isoparametric) interpolation $I_\Phi^m:\, C(\OGamma_\Phi) \to \Vk$ given by $(I_\Phi^m v)\circ \Phi_h= I^m(v\circ \Phi_h)$. We assume that the following optimal interpolation error bound holds for
all $0 \leq l \leq m+1$:
\begin{equation} \label{interpol}
 \|v - I_\Phi^m v\|_{H^l(\Phi_h(T))} \lesssim  h^{m+1-l} \|v\|_{H^{m+1}(\Phi_h(T))} \quad \text{for all}~~v \in H^{m+1}(\Phi_h(T)), ~~T \in \mT.
\end{equation}
Note that this is an interpolation estimate on the outer shape regular triangulation $\T_h$.
For $\Phi_h={\rm id}$ this interpolation bound holds due to standard finite element theory. For $\Phi_h=\Theta_h$ the bound follows from the theory on isoparametric finite elements, cf.~\cite{Lenoir86,GLR}.
Combining this with the trace estimate of Lemma~\ref{LemHansbo} and the estimate $\|v^e\|_{H^{m+1}(\OGamma_{\Phi})} \lesssim h^\frac12 \|v\|_{H^{m+1}(\Gamma)}$ for all $v \in H^{m+1}(\Gamma)$, which follows from \eqref{H2}, we obtain the result in the following lemma.
\begin{lemma} \label{intertrace} For the  space $\Vk$ we have the approximation error estimate
\begin{equation}
 \begin{split}
 & \min_{v_h \in \Vk} \big( \|v^e-v_h\|_{L^2(\Gamma_h)} + h \|\nabla(v^e-v_h)\|_{L^2(\Gamma_h)} \big) \\
 & \leq  \|v^e-I_\Phi^m v^e \|_{L^2(\Gamma_h)} + h \|\nabla(v^e- I_\Phi^m v^e)\|_{L^2(\Gamma_h)}
 \lesssim h^{m+1} \|v\|_{H^{m+1}(\Gamma)}
\end{split}
\end{equation}
for all $v \in H^{m+1}(\Gamma)$. Here $v^e$ denotes the constant extension of $v$ in normal direction.
\end{lemma}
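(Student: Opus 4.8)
The plan is to realize the left-hand minimum by the explicit competitor $v_h = I_\Phi^m v^e \in \Vk$, which immediately gives the first (``$\le$'') inequality, and then to estimate the resulting interpolation error on $\Gamma_h$ by chaining the trace inequality of Lemma~\ref{LemHansbo} with the bulk interpolation bound \eqref{interpol} and the extension estimate coming from \eqref{H2}. Write $w := v^e - I_\Phi^m v^e$; it remains to bound $\|w\|_{L^2(\Gamma_h)} + h\|\nabla w\|_{L^2(\Gamma_h)}$.

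The core computation is element-local. On each $T \in \T_h^\Gamma$ I would apply \eqref{inHansbo} to $w$ and, componentwise, to $\nabla w$, obtaining $\|w\|_{L^2(\Gamma_T)}^2 \lesssim h_T^{-1}\|w\|_{L^2(T)}^2 + h_T\|\nabla w\|_{L^2(T)}^2$ and $\|\nabla w\|_{L^2(\Gamma_T)}^2 \lesssim h_T^{-1}\|\nabla w\|_{L^2(T)}^2 + h_T\|\nabla^2 w\|_{L^2(T)}^2$. Inserting the interpolation bounds \eqref{interpol} for $l=0,1,2$, i.e. $\|w\|_{H^l(T)} \lesssim h^{m+1-l}\|v^e\|_{H^{m+1}(T)}$, the two right-hand sides collapse to $h^{2m+1}\|v^e\|_{H^{m+1}(T)}^2$ and $h^{2m-1}\|v^e\|_{H^{m+1}(T)}^2$, respectively (each of the two summands contributes the same power, so no cancellation is needed). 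Summing over $T \in \T_h^\Gamma$ replaces $\|v^e\|_{H^{m+1}(T)}$ by $\|v^e\|_{H^{m+1}(\OGamma_\Phi)}$, and the extension estimate $\|v^e\|_{H^{m+1}(\OGamma_\Phi)} \lesssim h^{1/2}\|v\|_{H^{m+1}(\Gamma)}$ from \eqref{H2} supplies one extra factor $h$. This yields $\|w\|_{L^2(\Gamma_h)} \lesssim h^{m+1}\|v\|_{H^{m+1}(\Gamma)}$ and $\|\nabla w\|_{L^2(\Gamma_h)} \lesssim h^{m}\|v\|_{H^{m+1}(\Gamma)}$, and hence the claimed $h^{m+1}$ bound for the combined norm.

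The main point requiring care is the mismatch between the two ingredients in the isoparametric case $\Phi_h = \Theta_h$: the interpolation estimate \eqref{interpol} is posed on the transformed cells $\Phi_h(T)$, whereas Lemma~\ref{LemHansbo} is stated on the reference tetrahedra $T$ with $\Gamma_T = \Gamma_h \cap T$. The hard part will be verifying that the trace inequality transfers to the curved cells $\Phi_h(T)$ with $h$-\emph{independent} constants. I expect this to follow because $\Theta_h$ is a near-identity $C^1$-diffeomorphism whose derivative and Jacobian are bounded above and below uniformly in $h$, so pulling norms back along $\Phi_h$ costs only $h$-uniform factors and the whole chain of element-local estimates survives the change of variables.

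A second, more routine, subtlety is regularity: applying \eqref{inHansbo} to $\nabla w$ needs $w \in H^2$ and uses \eqref{interpol} up to $l=2$. Both are legitimate precisely when $m \ge 1$ (so that $v^e \in H^{m+1} \subset H^2$ and $2 \le m+1$), which is the standing assumption. With these two points handled, the estimate is otherwise a direct bookkeeping of powers of $h$, exactly as outlined in the text preceding the statement.
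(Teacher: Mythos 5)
Your proposal is correct and follows essentially the same route as the paper, which proves the lemma by exactly this chain: take $v_h = I_\Phi^m v^e$, apply the trace inequality of Lemma~\ref{LemHansbo} elementwise to the interpolation error and its gradient, insert the bulk interpolation bound \eqref{interpol}, and gain the extra factor $h^{1/2}$ from the extension estimate $\|v^e\|_{H^{m+1}(\OGamma_\Phi)} \lesssim h^{1/2}\|v\|_{H^{m+1}(\Gamma)}$ coming from \eqref{H2}. Your two flagged subtleties (transferring the trace inequality to the curved cells $\Phi_h(T)$ in the isoparametric case, and the $H^2$ regularity needed to apply \eqref{inHansbo} to $\nabla w$) are genuine but handled exactly as you expect, with the paper deferring the isoparametric details to the cited literature.
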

\medskip

Finally we obtain an optimal order bound for the  approximation  term in the Strang Lemma by combining the result in the previous lemma with an appropriate assumption on the stabilization bilinear form.
\begin{lemma} Assume that the stabilization satisfies
\begin{equation} \label{conds1}
 s_h(w,w) \lesssim h^{-3}\|w\|_{L^2(\OGamma_\Phi)}^2 + h^{-1} \|\nabla w\|_{L^2(\OGamma_\Phi)}^2 \quad \text{for all} ~~w \in \Vregh.
\end{equation}
Then it holds
\[
  \min_{v_h \in \Vk} \enormh{u^e - v_h} \lesssim h^m \|u\|_{H^{m+1}(\Gamma)} \quad \text{for all}~~u \in H^{m+1}(\Gamma).
\]
\end{lemma}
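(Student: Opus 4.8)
The plan is to bound the minimum from above by evaluating $\enormh{\cdot}$ at the single candidate $v_h = I_\Phi^m u^e \in \Vk$, the isoparametric interpolant of the constant normal extension $u^e$. By the definition \eqref{eq:def-h-norm}, the squared norm splits as $\enormh{u^e - v_h}^2 = \|u^e - v_h\|_a^2 + s_h(u^e - v_h,\, u^e - v_h)$, so it suffices to bound each of the two summands by $h^{2m}\|u\|_{H^{m+1}(\Gamma)}^2$ and then take square roots.

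First I would handle the $\|\cdot\|_a$ part. Since $\|w\|_a^2 = \|w\|_{L^2(\Gamma_h)}^2 + \|\unablah w\|_{L^2(\Gamma_h)}^2$ and the tangential gradient is a pointwise projection of the full gradient, one has $\|\unablah w\|_{L^2(\Gamma_h)} \le \|\nabla w\|_{L^2(\Gamma_h)}$. Lemma~\ref{intertrace} applied with $v = u$ gives, for the value at $v_h = I_\Phi^m u^e$, the bound $\|u^e - v_h\|_{L^2(\Gamma_h)} + h\|\nabla(u^e - v_h)\|_{L^2(\Gamma_h)} \lesssim h^{m+1}\|u\|_{H^{m+1}(\Gamma)}$. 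Hence $\|u^e - v_h\|_a^2 \lesssim (h^{2m+2} + h^{2m})\|u\|_{H^{m+1}(\Gamma)}^2 \lesssim h^{2m}\|u\|_{H^{m+1}(\Gamma)}^2$, so the approximation part of the norm is already of the claimed order.

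For the stabilization term I would invoke the hypothesis \eqref{conds1}, which reduces $s_h(u^e - v_h,\, u^e - v_h)$ to the two \emph{volume} contributions $h^{-3}\|u^e - v_h\|_{L^2(\OGamma_\Phi)}^2$ and $h^{-1}\|\nabla(u^e - v_h)\|_{L^2(\OGamma_\Phi)}^2$. Summing the elementwise isoparametric interpolation bound \eqref{interpol} (with $l=0$ and $l=1$) over all cells of $\OGamma_\Phi$ yields $\|u^e - v_h\|_{L^2(\OGamma_\Phi)}^2 \lesssim h^{2m+2}\|u^e\|_{H^{m+1}(\OGamma_\Phi)}^2$ and $\|\nabla(u^e - v_h)\|_{L^2(\OGamma_\Phi)}^2 \lesssim h^{2m}\|u^e\|_{H^{m+1}(\OGamma_\Phi)}^2$. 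The decisive ingredient is the thin-tube estimate $\|u^e\|_{H^{m+1}(\OGamma_\Phi)} \lesssim h^{1/2}\|u\|_{H^{m+1}(\Gamma)}$ stated just before Lemma~\ref{intertrace} and following from \eqref{H2} in Lemma~\ref{lemma1}. Inserting it, the two contributions become $h^{-3}\cdot h^{2m+2}\cdot h$ and $h^{-1}\cdot h^{2m}\cdot h$, each equal to $h^{2m}\|u\|_{H^{m+1}(\Gamma)}^2$. Adding the two parts gives $\enormh{u^e - v_h}^2 \lesssim h^{2m}\|u\|_{H^{m+1}(\Gamma)}^2$, which proves the lemma.

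I expect the only delicate point to be this power-of-$h$ bookkeeping: the factor $h^{1/2}$ gained from integrating over a tube of width $\delta \sim h$ (Assumption~(A1), via the $\sqrt{\delta}$ in \eqref{H2}) must exactly compensate the negative powers $h^{-3}$ and $h^{-1}$ supplied by the stabilization bound \eqref{conds1}, and it is precisely the half-integer scaling in \eqref{H2} that makes this balance close at the optimal order $h^m$. Everything else is a routine combination of the already-available trace, interpolation, and extension estimates, so no further obstacle is anticipated.
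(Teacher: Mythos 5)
Your proposal is correct and follows essentially the same route as the paper: choose $v_h = I_\Phi^m u^e$, bound $\|u^e-v_h\|_a$ via Lemma~\ref{intertrace}, and bound $s_h(u^e-v_h,u^e-v_h)$ by combining \eqref{conds1} with the interpolation estimate \eqref{interpol} and the extension bound $\|u^e\|_{H^{m+1}(\OGamma_\Phi)} \lesssim h^{1/2}\|u\|_{H^{m+1}(\Gamma)}$. The paper states this last step in one line; your explicit power-of-$h$ bookkeeping, including the observation that the $\sqrt{\delta}\sim h^{1/2}$ gain from \eqref{H2} exactly compensates the $h^{-3}$ and $h^{-1}$ factors in \eqref{conds1}, is precisely the content the paper leaves implicit.
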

\begin{proof}
 Take $u \in H^{m+1}(\Gamma)$ and $v_h:= I_\Phi^m u^e$. From Lemma~\ref{intertrace} we get $\|u^e-v_h\|_a \lesssim h^m \|u\|_{H^{m+1}(\Gamma)} $.
 From the assumption \eqref{conds1} combined with the results in \eqref{interpol}
 we get $s_h(u^e-v_h,u^e-v_h)^\frac12 \lesssim h^m \|u\|_{H^{m+1}(\Gamma)}$, which completes the proof.
\end{proof}

\subsection{Consistency error bounds} \label{sectconsist}
In the consistency analysis, the geometric error is treated, and for obtaining optimal order bounds we need  Assumption~\ref{ass2}. One has to quantify the accuracy of the data extension $f_h$.
With $\mu_h$ from \eqref{surfmeasuer} we set
$
  \delta_f:=f_h- \mu_h f^e~\text{on}~\Gamma_h.
$

 \begin{lemma} \label{lem:conserr}
Let $u \in H^1(\Gamma)$ be the solution of \eqref{eq:weak-LB}. Assume that the data error satisfies $\|\delta_f\|_{L^2(\Gamma_h)} \lesssim h^{k+1} \|f\|_{L^2(\Gamma)}$ and the stabilization satisfies
\begin{equation} \label{conds2}
 \sup_{w_h \in \Vk} \frac{s_h(u^e,w_h)}{\enormh{w_h}} \lesssim h^{l}\|f\|_{L^2(\Gamma)} \quad \text{for some}~l\ge0. 
\end{equation}
 Then the following holds:
\[ \sup_{w_h \in \Vk} \frac{|A_h(u^e,w_h)-\int_{\Gamma_h} f_h w_h \, \rd{s_h}|}{\enormh{w_h}} \lesssim (h^{l}+h^{k+1})\|f\|_{L^2(\Gamma)}.
\]
\end{lemma}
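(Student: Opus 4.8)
The plan is to peel off the stabilization and reduce everything to a geometric consistency estimate on $\Gamma$. Writing $A_h=a_h+s_h$, the term $s_h(u^e,w_h)$ is, after division by $\enormh{w_h}$ and taking the supremum, already bounded by $h^{l}\|f\|_{L^2(\Gamma)}$ thanks to hypothesis \eqref{conds2}. It therefore suffices to show
\[
 |a_h(u^e,w_h)-\textstyle\int_{\Gamma_h}f_h w_h\,\rd s_h|\lesssim h^{k+1}\|f\|_{L^2(\Gamma)}\,\enormh{w_h}\quad\text{for all }w_h\in\Vk .
\]

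For $w_h\in\Vk$ let $w_h^\ell$ denote its lift to $\Gamma$, characterized by $w_h^\ell\circ p=w_h$ on $\Gamma_h$; note that $u^e|_{\Gamma_h}$ lifts to $u$. First I would pull $a_h(u^e,w_h)$ back to $\Gamma$ using the measure identity \eqref{surfmeasuer} and the gradient transformation \eqref{nabla1}. This produces a symmetric matrix field $R_h$ and a scalar $q_h$ on $\Gamma$ (both built from $\mu_h$, $d$, $H$, $n$, $n_h$) with
\[
 a_h(u^e,w_h)=\int_\Gamma R_h\unabla u\cdot\unabla w_h^\ell\,\rd s+\int_\Gamma q_h\, u\, w_h^\ell\,\rd s .
\]
Splitting $R_h=P+(R_h-P)$ and $q_h=1+(q_h-1)$ and using that $u$ solves \eqref{eq:weak-LB}, the leading part is the exact form $a(u,w_h^\ell)=\int_\Gamma f w_h^\ell\,\rd s$. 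Transforming the latter back to $\Gamma_h$ by \eqref{surfmeasuer} gives $\int_{\Gamma_h}\mu_h f^e w_h\,\rd s_h$, so that $a(u,w_h^\ell)-\int_{\Gamma_h}f_h w_h\,\rd s_h=-\int_{\Gamma_h}\delta_f w_h\,\rd s_h$. Collecting the pieces,
\[
 a_h(u^e,w_h)-\int_{\Gamma_h}f_h w_h\,\rd s_h=\int_\Gamma(R_h-P)\unabla u\cdot\unabla w_h^\ell\,\rd s+\int_\Gamma(q_h-1)u\,w_h^\ell\,\rd s-\int_{\Gamma_h}\delta_f w_h\,\rd s_h .
\]

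To finish I would bound the three terms by Cauchy--Schwarz. Granting the geometric estimates $\|R_h-P\|_{L^\infty(\Gamma)}\lesssim h^{k+1}$ and $\|q_h-1\|_{L^\infty(\Gamma)}\lesssim h^{k+1}$ (the latter immediate from \eqref{estmu}), the first two integrals are $\lesssim h^{k+1}\|u\|_{H^1(\Gamma)}\|w_h^\ell\|_{H^1(\Gamma)}$, and the data term is $\lesssim\|\delta_f\|_{L^2(\Gamma_h)}\|w_h\|_{L^2(\Gamma_h)}\lesssim h^{k+1}\|f\|_{L^2(\Gamma)}\|w_h\|_{L^2(\Gamma_h)}$ by the data assumption. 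Then I would convert norms: $\|u\|_{H^1(\Gamma)}\lesssim\|f\|_{L^2(\Gamma)}$ by well-posedness of \eqref{eq:weak-LB}, and $\|w_h^\ell\|_{H^1(\Gamma)}\lesssim\|w_h\|_{H^1(\Gamma_h)}\le\enormh{w_h}$ by the lift norm equivalence together with $\enormh{w_h}^2=\|w_h\|_a^2+s_h(w_h,w_h)\ge\|w_h\|_{L^2(\Gamma_h)}^2+\|\unablah w_h\|_{L^2(\Gamma_h)}^2$. Dividing by $\enormh{w_h}$ and taking the supremum yields the displayed bound, which together with the $s_h$ contribution proves the lemma.

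The main obstacle is the sharp estimate $\|R_h-P\|_{L^\infty(\Gamma)}\lesssim h^{k+1}$. This is delicate because the normal defect \eqref{condB} is only $O(h^k)$, so a crude bound on $R_h-P$ loses one power of $h$. The gain comes from the algebraic structure of the transformation: with $e:=n-n_h$ one has $P_h-P=n e^T+e n^T-e e^T$, and since $\unabla u$ and $\unabla w_h^\ell$ are tangent to $\Gamma$ the two terms linear in $e$ contract against the factor $n$ and vanish, leaving only the quadratic contribution of order $\|e\|^2\lesssim h^{2k}$; the remaining deviations of $R_h$ from $P$ enter solely through the factors $(I-dH)$ and $\mu_h^{-1}$, which are $O(h^{k+1})$ by \eqref{condA} and \eqref{estmu}. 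Since $2k\ge k+1$ for $k\ge1$, the combined bound is $O(h^{k+1})$. Carrying out this cancellation bookkeeping rigorously is the technical heart of the argument and is done in \cite{reusken2015,GLR,Demlow06}.
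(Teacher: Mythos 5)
Your proposal is correct and follows essentially the same route as the paper: the paper's proof also splits $A_h(u^e,w_h)-\int_{\Gamma_h} f_h w_h\,\rd s_h$ into the $a_h$-consistency part and the $s_h$ part, disposes of the latter via hypothesis \eqref{conds2}, and bounds the former by $h^{k+1}\|f\|_{L^2(\Gamma)}\enormh{w_h}$. The only difference is that the paper simply cites \cite{reusken2015} (Lemma 5.5) for this geometric consistency estimate, whereas you sketch its proof (lifting to $\Gamma$, invoking the PDE and the data-error bound, and exploiting the quadratic cancellation $P P_h P - P = -Pee^TP = O(h^{2k})$ of the normal defect), and your sketch correctly identifies the mechanism used in that reference.
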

\begin{proof}
 We use the splitting
\[
   |A_h(u^e,w_h)-\int_{\Gamma_h} f_h w_h \, \rd{s_h}| \leq |a_h(u^e,w_h)-\int_{\Gamma_h} f_h w_h \rd{s_h}| + | s_h(u^e,w_h)|.
\]
The first term has been treated in many papers. A rather general result, in which one  needs Assumption~\ref{ass2} and the bound on the data error, is given in \cite{reusken2015}, Lemma 5.5.
The analysis yields
\[
  \sup_{w_h \in  \Vk} \frac{|a_h(u^e,w_h)-\int_{\Gamma_h} f_h w_h \, \rd{s_h}|}{\enormh{w_h}} \lesssim h^{k+1}\|f\|_{L^2(\Gamma)}.
\]
We use assumption \eqref{conds2} to bound the second term.
\end{proof}
\begin{remark} \rm
\label{daterror}
 We comment on the data error $\|\delta_f\|_{L^2(\Gamma_h)}$. If we assume $f$ to be defined in a  neighborhood $U_{\delta_0}$ of $\Gamma$  one can then use
\begin{equation} \label{extens}
 f_h(x)= f(x).
\end{equation}
Using Assumption~\ref{ass2},  \eqref{estmu} and a Taylor expansion we get  $\|f- \mu_h f^e\|_{L^2(\Gamma_h)} \leq c h^{k+1} \|f\|_{H^{1,\infty}(U_{\delta_0})}$. Hence, a data error bound $\|\delta_f\|_{L^2(\Gamma_h)} \leq \hat c h^{k+1}\|f\|_{L^2(\Gamma)}$ holds with $\hat c=\hat c(f)=c \|f\|_{H^{1,\infty}(U_{\delta_0})} \|f\|_{L^2(\Gamma)}^{-1}$  and a constant $c$ independent of $f$. Thus, in problems with smooth data, $ f \in H^{1,\infty}(U_{\delta_0})$,   the extension defined in  \eqref{extens} satisfies the  condition on  the data error in Lemma~\ref{lem:conserr}.
\end{remark}

\subsection{TraceFEM error bound and conditions on $s_h(\cdot,\cdot)$}
As a corollary of the results in the sections~\ref{sectStrang}--\ref{sectconsist} we obtain the following main theorem on the error of TraceFEM.
\begin{theorem} \label{mainthm}
Let $u \in H^{m+1}(\Gamma)$ be the solution of \eqref{eq:weak-LB} and $u_h \in \Vk$ a solution of \eqref{eq:varform}. Let the Assumptions~\ref{ass1} and \ref{ass2} be satisfied and assume that  the data error bound $\|\delta_f\|_{L^2(\Gamma_h)} \lesssim h^{k+1} \|f\|_{L^2(\Gamma)}$ holds. Furthermore,  the stabilization should satisfy the conditions \eqref{conds1}, \eqref{conds2}. Then the following {\rm a priori} error estimate holds:
\begin{equation}
  \enormh{u^e - u_h }  \lesssim h^m \Vert u \Vert_{H^{m+1}(\Gamma)} + (h^{l}+h^{k+1}) \Vert f \Vert_{L^2(\Gamma)},
\end{equation}
where $m$ is the polynomial degree of the background FE space, $k+1$ is the order of surface approximation from  Assumption~\ref{ass2}, see also \eqref{eq:approx_phi}, and $l$ is the degree of consistency of the stabilization term, see \eqref{conds2}.
\end{theorem}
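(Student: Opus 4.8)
The plan is to combine the Strang Lemma with the approximation and consistency estimates established in the preceding three subsections, so that the theorem becomes essentially a bookkeeping exercise. The starting point is the bound in Lemma~\ref{Strang}, which controls $\enormh{u^e-u_h}$ by twice the best-approximation term $\min_{v_h\in\Vk}\enormh{u^e-v_h}$ plus the consistency term $\sup_{w_h\in\Vk}|A_h(u^e,w_h)-\int_{\Gamma_h}f_hw_h\,\rd{s_h}|/\enormh{w_h}$. The entire strategy is to estimate these two terms separately using the lemmas already proved, and then add the two contributions.

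First I would bound the approximation term. Since the hypotheses include Assumption~\ref{ass1} (needed for the trace inequality of Lemma~\ref{LemHansbo}) and the stabilization condition \eqref{conds1}, the lemma at the end of section~\ref{sectapprox} applies directly and yields
\[
  \min_{v_h\in\Vk}\enormh{u^e-v_h}\lesssim h^m\|u\|_{H^{m+1}(\Gamma)}.
\]
Next I would bound the consistency term. Here the hypotheses supply exactly what Lemma~\ref{lem:conserr} requires: Assumption~\ref{ass2} for the geometric error, the data-error bound $\|\delta_f\|_{L^2(\Gamma_h)}\lesssim h^{k+1}\|f\|_{L^2(\Gamma)}$, and the stabilization consistency condition \eqref{conds2}. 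Applying that lemma gives
\[
  \sup_{w_h\in\Vk}\frac{|A_h(u^e,w_h)-\int_{\Gamma_h}f_hw_h\,\rd{s_h}|}{\enormh{w_h}}\lesssim (h^{l}+h^{k+1})\|f\|_{L^2(\Gamma)}.
\]

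Finally I would substitute both estimates into the Strang bound \eqref{Strangbound} and collect terms, absorbing the harmless factor of $2$ into the implicit constant, to arrive at $\enormh{u^e-u_h}\lesssim h^m\|u\|_{H^{m+1}(\Gamma)}+(h^l+h^{k+1})\|f\|_{L^2(\Gamma)}$, which is the claimed estimate. I expect no genuine obstacle in this final corollary, since the real work has already been done in the component lemmas; the only point demanding a little care is verifying that the regularity assumption $u^e\in\Vregh$ needed to apply the Strang Lemma is consistent with $u\in H^{m+1}(\Gamma)$ (so that $u^e$ has the required $H^1$ smoothness on $\OGamma_\Phi$ and that $s_h(u^e,\cdot)$ is well-defined), and that the various hypotheses of the three lemmas are mutually compatible with the single set of assumptions stated in the theorem. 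Matching the exponents $m$, $k+1$, and $l$ correctly when adding the two terms is the main thing to get right, but it is purely a matter of aligning the bounds already in hand.
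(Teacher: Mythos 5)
Your proposal is correct and follows exactly the paper's own route: the paper presents Theorem~\ref{mainthm} precisely as a corollary of the Strang Lemma~\ref{Strang} combined with the approximation bound of section~\ref{sectapprox} (using Assumption~\ref{ass1} and \eqref{conds1}) and the consistency bound of Lemma~\ref{lem:conserr} (using Assumption~\ref{ass2}, the data-error bound, and \eqref{conds2}). Your bookkeeping of which hypotheses feed which lemma, and the final addition of the two terms, matches the intended argument.
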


\begin{remark} \rm Optimal order error bounds in the $L^2$-norm are also known in the literature for the stabilized TraceFEM and for the original variant without stabilization with $m=k=1$, \cite{Alg1,ORG09}. For the higher order case with  $\Phi_h=\text{id}$ and $s_h \equiv 0$,
the  optimal order estimate
\[
\|u^e - u_h\|_{L^2(\Gamma_h)} \lesssim h^{m+1} \Vert u \Vert_{H^{m+1}(\Gamma)} + h^{k+1}\Vert f \Vert_{L^2(\Gamma)}
\]
is derived in \cite{reusken2015}. We expect that the analysis can be extended to the isoparametric  variant
of the TraceFEM, but this has not been done, yet.
 \end{remark}
\medskip

The conditions \eqref{conds1} and \eqref{conds2} on the stabilization allow an optimal order discretization error  bound. Clearly these conditions are satisfied for $s_h(\cdot,\cdot) \equiv 0$. Below we introduce a third condition, which has a different nature. This condition allows a uniform $\mathcal{O}(h^{-2})$ condition number bound for the stiffness matrix. The latter matrix is the representation of $A_h(\cdot,\cdot)$ in terms of standard nodal basis functions on the background mesh $\T_h^\Gamma$.
The following theorem is proved in \cite{GLR}.
\begin{theorem}\label{coro:condition-number}
Assume that the stabilization satisfies \eqref{conds1} and that
\begin{equation}
a_h(u_h,u_h) + s_h(u_h,u_h) \gtrsim  h^{-1} \| u_h\|_{L^2(\Omega^\Gamma_\Phi)}^2 \quad \text{for all}~~u_h \in \Vk. \label{conds3}
\end{equation}
Then, the spectral condition number of the stiffness matrix corresponding to $A_h(\cdot,\cdot)$ is bounded by $c h^{-2}$, with a constant $c$ independent of $h$ and of the location of $\Gamma_h$ in the triangulation $\T_h$.
\end{theorem}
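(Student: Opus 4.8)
The plan is to pass from the stiffness matrix $\mathbf A$ to the bilinear form $A_h$ via the nodal basis and to control the extreme eigenvalues through Rayleigh quotients. Writing $u_h=\sum_i \vec u_i \phi_i \in \Vk$ for the coefficient vector $\vec u$ (with respect to the active background degrees of freedom, transformed by $\Phi_h$ when $\Phi_h=\Theta_h$), symmetry and positive semidefiniteness of $A_h$ give $\vec u^T \mathbf A\, \vec u = A_h(u_h,u_h)$, so $\lambda_{\max}(\mathbf A)=\max_{\vec u\neq 0} A_h(u_h,u_h)/\|\vec u\|_2^2$ and $\lambda_{\min}(\mathbf A)=\min_{\vec u\neq 0} A_h(u_h,u_h)/\|\vec u\|_2^2$, whence the spectral condition number is $\kappa(\mathbf A)=\lambda_{\max}(\mathbf A)/\lambda_{\min}(\mathbf A)$. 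The bridge between the Euclidean coefficient norm and the functional estimates is the standard shape-regular mass-matrix equivalence on the bulk,
\[
 c_1 h^3 \|\vec u\|_2^2 \le \|u_h\|_{L^2(\OGamma_\Phi)}^2 \le c_2 h^3 \|\vec u\|_2^2,
\]
with $c_1,c_2$ depending only on the shape regularity of $\T_h$. I would stress that this is a genuinely volumetric identity over full (transformed) tetrahedra and is therefore completely insensitive to how $\Gamma_h$ cuts the mesh.

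For the upper bound I would show $A_h(u_h,u_h)\lesssim \|\vec u\|_2^2$. For $a_h$ I would apply the trace inequality of Lemma~\ref{LemHansbo} on each $\Gamma_T$ followed by the finite element inverse inequalities $\|\nabla u_h\|_{L^2(T)}\lesssim h^{-1}\|u_h\|_{L^2(T)}$ and $\|\nabla^2 u_h\|_{L^2(T)}\lesssim h^{-2}\|u_h\|_{L^2(T)}$, which yields $\|u_h\|_{L^2(\Gamma_h)}^2+\|\nabla_{\Gamma_h} u_h\|_{L^2(\Gamma_h)}^2 \lesssim h^{-3}\|u_h\|_{L^2(\OGamma_\Phi)}^2$; the right mass inequality then turns the right-hand side into $\|\vec u\|_2^2$. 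For the stabilization I would invoke the assumed bound \eqref{conds1}, combined once more with the inverse inequality $\|\nabla u_h\|_{L^2(\OGamma_\Phi)}^2\lesssim h^{-2}\|u_h\|_{L^2(\OGamma_\Phi)}^2$, to obtain $s_h(u_h,u_h)\lesssim h^{-3}\|u_h\|_{L^2(\OGamma_\Phi)}^2\lesssim \|\vec u\|_2^2$. Adding the two estimates gives $\lambda_{\max}(\mathbf A)\lesssim 1$; the essential point is that the constant in Lemma~\ref{LemHansbo} is independent of the cut, so the surface terms preserve robustness.

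For the lower bound I would use assumption \eqref{conds3} directly, $A_h(u_h,u_h)\gtrsim h^{-1}\|u_h\|_{L^2(\OGamma_\Phi)}^2$, and then the left mass inequality gives $A_h(u_h,u_h)\gtrsim h^{-1}h^3\|\vec u\|_2^2 = h^2\|\vec u\|_2^2$, hence $\lambda_{\min}(\mathbf A)\gtrsim h^2$. In particular $\mathbf A$ is positive definite, so $\kappa(\mathbf A)$ is well defined, and combining the two bounds yields $\kappa(\mathbf A)=\lambda_{\max}/\lambda_{\min}\lesssim h^{-2}$ with a constant independent of $h$ and of the position of $\Gamma_h$ in $\T_h$.

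I expect the main obstacle to be precisely the robustness of the lower bound with respect to the cut. Without stabilization the trace of a background basis function on $\Gamma_h$ can become arbitrarily small (or the restricted functions nearly linearly dependent) when $\Gamma_h$ slices a tetrahedron near a vertex, so a unit-length coefficient vector $\vec u$ may produce an arbitrarily small $a_h(u_h,u_h)$; this is exactly what destroys $\lambda_{\min}$ in the unstabilized method. Assumption \eqref{conds3} is the hypothesis that restores control of the full bulk $L^2$ norm over $\OGamma_\Phi$, and thereby of $\|\vec u\|_2$, uniformly in the cut. Thus the substantive content of the theorem is that this single coercivity-type estimate, together with the cut-independent trace bound of Lemma~\ref{LemHansbo}, suffices; the remainder is the careful bookkeeping of $h$-powers sketched above.
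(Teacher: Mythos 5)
Your proposal is correct and takes essentially the same route as the proof the paper relies on (the paper defers this theorem to \cite{GLR}): Rayleigh-quotient characterization of the extreme eigenvalues, the cut-independent volumetric mass-matrix equivalence $\|u_h\|_{L^2(\OGamma_\Phi)}^2 \sim h^3\|\vec u\|_2^2$ over full (transformed) tetrahedra, Lemma~\ref{LemHansbo} combined with inverse inequalities to get $\lambda_{\max}\lesssim 1$, and assumption \eqref{conds3} to get $\lambda_{\min}\gtrsim h^2$. The only point you treat lightly is the isoparametric case $\Phi_h=\Theta_h$, where the mass-matrix equivalence and the inverse estimates on curved elements require the uniform bounds on $D\Theta_h$, $(D\Theta_h)^{-1}$ and $\det D\Theta_h$ established in \cite{GLR}; this is a routine but necessary ingredient.
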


\begin{remark} \label{uniquesol} \rm
From Theorem~\ref{coro:condition-number} it follows that if the stabilization satisfies \eqref{conds1} and  \eqref{conds3} then the stiffness matrix has  full rank and thus the discrete problem \eqref{eq:varform} has a unique solution.
\end{remark}
\medskip

We summarize the assumptions on the stabilization term $s_h$ used to derive Theorem \ref{mainthm} (optimal discretization error bound) and Theorem \ref{coro:condition-number} (condition number bound):
\begin{subequations} \label{conds}
\begin{align}
  s_h(w,w) & \lesssim h^{-3}\|w\|_{L^2(\OGamma_\Phi)}^2 + h^{-1} \|\nabla w\|_{L^2(\OGamma_\Phi)}^2 \quad \text{for all}~w \in \Vregh, \label{conds1A} \\
\sup_{w_h \in  \Vk} \frac{s_h(u^e,w_h)}{\enormh{w_h}}  & \lesssim h^{l}\|f\|_{L^2(\Gamma)}, \quad \text{with }l\ge0, 
\label{conds2A} \\
a_h(u_h,u_h)& + s_h(u_h,u_h)  \gtrsim  h^{-1} \| u_h\|_{L^2(\OGamma_\Phi)}^2 \quad \text{for all}~~u_h \in \Vk. \label{conds3A}
\end{align}
\end{subequations}
In \cite{GLR} these conditions are studied for various stabilizations. It is shown that for $m=k=1$  all four stabilization methods discussed in section~\ref{sectStiffness} satisfy these three conditions with $l=2$. Hence, these methods lead to optimal first order discretization error bounds and uniform $\mathcal{O}(h^{-2})$ condition number bounds. For higher order elements and geometry recovery, $m=k \geq 2$, however, only the normal derivative volume stabilization satisfies these conditions with $l=k+1$.\\
%
%
\section{Stabilized TraceFEM for surface convection--diffusion equations} \label{sectconvdiff}
  Assume we are given a smooth vector field $\bw$ everywhere tangential to the surface $\Gamma$. Another model problem of interest is the surface advection-diffusion equation,
\begin{equation}
u_t+ \mathbf{w}\cdot\nabla_{\Gamma} u+ (\Div_\Gamma\bw)u -\varepsilon\Delta_{\Gamma} u=0
\quad\text{on}~~\Gamma.
\label{e:2.2}
\end{equation}
In section~\ref{sec:evol} we shall consider equations modelling the conservation of a scalar quantity $u$ with a diffusive flux on an evolving surface $\Gamma(t)$, which is passively advected by a velocity field $\bw$. The equation  \eqref{e:2.2} represents a particular case of this problem, namely  when $\bw\cdot \bn=0$ holds, meaning that the surface is \emph{stationary}. A finite difference  approximation of $u_t$  results in the elliptic surface PDE:
\begin{equation}
  -\varepsilon\Delta_{\Gamma} u+\mathbf{w}\cdot\nabla_{\Gamma} u + (c+\Div_\Gamma\bw)\,u =f\quad\text{on}~~\Gamma. \label{problem}
\end{equation}
We make the following regularity  assumptions on the data:  $f\in L^2(\Gamma)$, $c=c(\bx)\in L^\infty(\Gamma)$, $\bw\in H^{1,\infty}(\Gamma)^3$.
Integration by parts over $\Gamma$ and using $\mathbf{w}\cdot\bn=0$ leads us to the weak formulation \eqref{contproblem} with
\begin{equation*}
a(u,v):=\int_{\Gamma}(\eps\nabla_{\Gamma} u\cdot\nabla_{\Gamma} v  - (\bw\cdot\nat v) u
+c\, uv)\, {\rm ds}.
\end{equation*}
Note that for $c=0$ the source term in \eqref{problem} should satisfy the zero mean constraint $\int_\Gamma f\,{\rm ds}=0$.
For  well-posedness of the variational formulation in $H^1(\Gamma)$ it is sufficient to assume
\begin{equation}\label{A1}
c+\frac12\Div_\Gamma\bw\ge c_0>0~~\text{on}~~\Gamma.
 \end{equation}
For  given extensions $\bw_h$, $c_h$, and $f_h$ off the surface to a suitable neighborhood, the formulation of the TraceFEM
or isoparametric TraceFEM is similar to the one for the Laplace--Beltrami equation. However, as in the usual   Galerkin finite element method for  convection--diffusion equations on a planar domain, for the case of strongly dominating convection the method would be prone to instabilities if the mesh is not sufficiently fine.
To handle the case of dominating convection, a SUPG type stabilized TraceFEM was introduced and analyzed in \cite{ORX}.  The stabilized formulation reads: Find $u_h\in V_h^{\rm bulk}$ such that
\begin{multline}
 \int_{\Gamma_h}(\eps\nath u_h\cdot\nath v_h\, - (\bw_h\cdot\nath v_h) u_h+c_h\, u_hv_h)\, {\rm ds}_h \\
  +\sum_{T\in\T_h^\Gamma}\delta_T\int_{\Gamma_T}\big(-\eps\Delta_{\Gamma_h}u_h + \bw_h\cdot\nath u_h + (c_h+\Div_{\Gamma_h}\bw_h) \, u_h\big)\,\bw_h\cdot\nath v_h\, {\rm ds}_h\\ =\int_{\Gamma_h}f_h v_h\, {\rm ds}_h + \sum_{T\in\T_h^\Gamma}\delta_T\int_{\Gamma_T}f_h(\bw_h\cdot\nath v_h)\, {\rm ds}_h\quad \forall~ v_h\in V_h^{\rm bulk}. \label{FEM_SUPG}
\end{multline}
The analysis of \eqref{FEM_SUPG} was carried out in \cite{ORX} for the lowest order method, $k=m=1$.
Both analysis and numerical experiments in \cite{ORX} and \cite{chernyshenko2015adaptive} revealed that the properties
of the stabilized formulation \eqref{FEM_SUPG} remarkably resemble those of the well-studied SUPG method for planar case.
In particular, the stabilization parameters  $\delta_T$ may be designed following the standard arguments, see, e.g., \cite{TobiskaBook}, based on mesh Peclet numbers for background tetrahedra and independent of how $\Gamma_h$ cuts through the mesh.
One particular choice resulting from the analysis is
\begin{equation}
 \widetilde{\delta_T}=
\left\{
\begin{aligned}
&\frac{\delta_0 h_{T}}{\|\mathbf{w}_h\|_{L^\infty(\Gamma_T)}} &&\quad \hbox{ if } \mathsf{Pe}_T> 1,\\
&\frac{\delta_1 h^2_{T}}{\eps}  &&\quad \hbox{ if } \mathsf{Pe}_T\leq 1,
\end{aligned}
\right.\quad\text{and} \quad\delta_T=\min\{\widetilde{\delta_T},c^{-1}\}, \label{e:2.10}
\end{equation}
with $\displaystyle \mathsf{Pe}_T:=\frac{h_T \|\mathbf{w}_h\|_{L^\infty(\Gamma_T)}}{2\eps}$, the usual background tetrahedral mesh size $h_{T}$, and some given positive constants  $\delta_0,\delta_1\geq 0$.

 Define $\delta(x)=\delta_T$ for $x\in \Gamma_T$. The discretization error of the trace SUPG method \eqref{FEM_SUPG} can be estimated in the following mesh-dependent norm:
\begin{equation} \label{defn}
 \| u \|_{\ast}:=\left(\eps\int_{\Gamma_h}|\nath u|^2\, {\rm ds} + \int_{\Gamma_h}\delta(x)|\bw_h\cdot\nath u|^2\, {\rm ds} + \int_{\Gamma_h}c \, |u|^2\, {\rm ds} \right)^{\frac{1}{2}}.
\end{equation}
With the further assumption $\Div_{\Gamma}\bw=0$, the following  error estimate is proved in \cite{ORX}:
 \begin{equation*}
\|u^e-u_h\|_{\ast}\lesssim h\big(h^{1/2}+\eps^{1/2}+c^{\frac12}_{\max}h+\frac{h}{\sqrt{\eps+ c_{\min}}}+ \frac{h^3}{\sqrt{\eps}}\big)  (\|u\|_{H^2(\Gamma)}+\|f\|_{L^2(\Gamma)}),
\end{equation*}
with $c_{\min}:=\text{ess\,inf}_{x\in \Gamma} c(x)$ and $c_{\max}:=\text{ess\,sup}_{x\in \Gamma} c(x)$.

The SUPG stabilization can be combined with any of the algebraic stabilizations described in section~\ref{sectanalysis}.
Note that the ghost penalty stabilization is often sufficient to stabilize a finite element
method for the convection dominated problems~\cite{2015arXiv151102340B} and then the SUPG method is not needed. 
On the other hand, SUPG stabilization does not change the stiffness matrix fill-in and can be used for higher-order trace finite elements.

\section{\textit{A posteriori} error estimates and  adaptivity} \label{sectadap} In finite element methods, \textit{a posteriori} error estimates play a central role in providing a finite element user with reliable local error indicators. Given  elementwise indicators of the discretization error one may consider certain mesh adaptation strategies. This is a well established approach for problems  where the solution behaves differently in different parts of the domain, e.g. the solution has local singularities. Such a technique is also useful for the numerical solution of PDEs defined on surfaces, where the local behaviour of the solution may depend on physical model parameters as well as on the surface geometry.

\textit{A posteriori} error estimates for the TraceFEM have been derived for the Laplace--Beltrami problem in \cite{DemlowOlshanskii12}
and for the convection--diffusion problem on a stationary surface in \cite{chernyshenko2015adaptive}. In both  papers,
only the case of $k=m=1$ was treated (paper \cite{chernyshenko2015adaptive} dealt with trilinear background elements on octree meshes) and only residual type error indicators have been studied.  One important conclusion of these studies is that reliable and efficient residual error indicators can be based on background mesh characteristics. More precisely, for the TraceFEM solution of the Laplace--Beltrami problem \eqref{discr1} one can  define a family of elementwise error indicators
\begin{multline}
\label{intro1}
\eta_p(T)= C_p  \Big ( |\Gamma_T|^{1/2-1/p} h_T^{2/p} \|f_h+ \Delta_{\Gamma_h} u_h\|_{L_2(T)}
\\  + \sum_{ E \subset \partial \Gamma_T} |E|^{1/2-1/p} h_T^{1/p} \|\llbracket \nabla_{\Gamma_h} u_h \rrbracket \|_{L_2(E)} \Big ),\quad p \in [2, \infty],
\end{multline}
for each  $T\in\T_h^\Gamma$. Here $h_T$ is the diameter of the outer tetrahedron  $T$. In \cite{DemlowOlshanskii12}, for $p < \infty$,   reliability up to geometric terms is shown of the corresponding \textit{a posteriori} estimator that is  obtained by suitably summing these local contributions over the mesh. Numerical experiments with surface solutions experiencing point singularities  confirm the reliability and efficiency of the error indicators for any $2 \le p \le \infty$.
Employing a simple refinement strategy based on $\eta_p(T)$  for the TraceFEM was found to provide optimal-order convergence in the $H^1$ and $L^2$ surface norms, and the choice of $p$ in \eqref{intro1} had essentially no effect on the observed error decrease even with respect to constants. This is another example of the important principle that the properties of the TraceFEM are driven by the properties of the background elements.

Below we set $p=2$, i.e., only the properties of the background meshes are taken into account, and formulate a  result
for the case of a convection--diffusion problem.
For each surface element $\Gamma_T$, $T\in\mathcal{T}_h^\Gamma$, denote by ${\mathbf{m}_h}|_{E}$ the outer normal to an edge $E\in\partial \Gamma_T$ in the plane which contains the element $\Gamma_T$. For a curved surface $\Gamma$, `tangential' normal vectors to $E$ from two  different sides are not necessarily collinear.  Let $\llbracket\mathbf{m}_h\rrbracket|_{E}=\mathbf{m}_h^+  +\mathbf{m}_h^-$ be the jump of two outward normals on the edge $E$. For a planar surface, this jump is zero. Over $\Gamma_h$, these jumps produce an additional consistency
 term in the integration by parts  formula and so they end up in the residual error indicators as shown below.

Consider the TraceFEM error $u-u_h^l$ ($u_h^l$ is the TraceFEM solution lifted on $\Gamma$, i.e., $(u_h^l)^e=u_h$),  with $s_h(\cdot,\cdot)=0$, $k=m=1$ and $a_h(\cdot,\cdot)$ as in \eqref{FEM_SUPG} with $\delta_T=0$ (no SUPG stabilization).
The functional $\|[v]\|:=(\eps \|\nabla_\Gamma v\|_{L_2(\Gamma)}^2+\|(c+\frac12\Div_\Gamma\bw)\, v\|_{L_2(\Gamma)}^2)^{\frac12}$ defines a norm of $V_h^\Gamma$. The following \textit{a posteriori} bound  can be proved, cf. \cite{chernyshenko2015adaptive}:
\begin{equation}\label{apost}
\|[u-u_h^l]\|\lesssim
\left(\sum_{T\in \T_h^\Gamma}\left[\eta_R(T)^2+\eta_E(T)^2 \right]\right)^{\frac12} +\text{h.o.t.}.
\end{equation}
with
\[
\begin{split}
\eta_R(T)^2&=h^2_{T}\|f_h+\eps\Delta_{\Gamma_h}  u_h-(c_h+\Div_{\Gamma_h}\bw_h) u_h - \mathbf{w}_h\cdot\nath u_h\|^2_{L^2(\Gamma_T)}.\\
\eta_E(T)^2&=\sum_{E\in\partial \Gamma_T}h_{T}\left(\|\llbracket \eps\unablah u_h \rrbracket\|_{L^2(E)}^2+\|\bw_h\cdot\llbracket\mathbf{m}_h\rrbracket\|_{L^2(E)}^2\right).
\end{split}
\]
The ``h.o.t.'' stands for certain geometric and data approximation terms, which are of  higher order with respect to the bulk mesh discretization parameter if $\Gamma$ is smooth and $\Gamma_h$ resolves $\Gamma$ as discussed in section~\ref{sectStrang}. A representation of ``h.o.t.'' in  terms of geometric quantities is given in \cite{DemlowOlshanskii12,chernyshenko2015adaptive}.
\begin{remark}[Numerical experiments] \label{remnumexp2} \rm Results of numerical experiments demonstrating optimal convergence in $H^1$ and $L^2$ surface norms of the adaptive TraceFEM ($k=m=1$) for the Laplace--Beltrami equation with point singularity are found in  \cite{DemlowOlshanskii12}. More numerical examples  for the Laplace--Beltrami and convection--diffusion problems are given in  \cite{chernyshenko2015adaptive}. All experiments reveal similar adaptive properties of the TraceFEM to those  expected from a standard (volumetric) adaptive  FEM.
\end{remark}

\section{Coupled surface-bulk problems} \label{sectbulksurface}
Coupled bulk-surface or bulk-interface partial differential equations arise in many applications, e.g.,  in multiphase fluid dynamics \cite{GReusken2011} and biological applications \cite{Bonito}.
In this section, we consider a relatively simple coupled bulk-interface advection-diffusion problem. This problem arises in models describing the behavior of soluble surface active agents (surfactants) that are adsorbed at liquid-liquid interfaces.
For a discussion of physical phenomena related to soluble surfactants in two-phase incompressible flows we refer to the literature, e.g., \cite{GReusken2011,Ravera,Clift,Tasoglu}.

Systems of partial differential equations that couple bulk domain effects with interface (or surface) effects pose  challenges both for the  mathematical analysis of equations and the development of numerical methods. These  challenges grow if  phenomena  occur at different physical scales, the coupling is nonlinear or the interface is evolving in time. To our knowledge, the analysis of numerical methods for coupled bulk-surface (convection-)diffusion has been addressed in the literature only very recently. In fact, problems related to the one addressed in this section have been considered only in \cite{burman2016cut,eigel2017posteriori,ElliottRanner,GrossOlshanskiiReusken2015}. 
In these references finite element methods for coupled bulk-surface partial differential equations are proposed and analyzed. In \cite{burman2016cut,ElliottRanner} a stationary diffusion problem on a bulk domain is linearly coupled with a stationary diffusion equation on the boundary of this domain. A key difference between the methods in \cite{burman2016cut} and \cite{ElliottRanner} is that in the latter boundary \emph{fitted} finite elements are used, whereas in the former \emph{unfitted} finite elements  are applied. Both papers include error analyses of these methods. In the recent paper \cite{Chen2014} a similar coupled surface-bulk system is treated with a  different approach, based on the immersed boundary method. In that paper an evolving surface is considered, but only spatially two-dimensional problems are treated and no theoretical error analysis is given. The TraceFEM that we treat in this section is the one presented in \cite{GrossOlshanskiiReusken2015}. We restrict to stationary problems and a
linear coupling between a surface/interface PDE and convection--diffusion  equations in the two adjacent subdomains. The results obtained are a starting point for research on other classes of problems, e.g., with an evolving interface.

In the finite element method that we propose, we use the trace technique presented in section~\ref{sectbasic} for discretization of a convection--diffusion equation on the stationary interface. We also apply the trace technique for the discretization of the PDEs in the two bulk domains. In the literature such trace techniques on bulk domains are usually called \emph{cut} finite element methods, cf., e.g.,~\cite{burman2016cut} and section~\ref{secOther}. As we will see below in section~\ref{sectmethodbulk}, we can use \emph{one} underlying standard finite element space, on a triangulation which is not fitted to the interface, for the discretization of \emph{both} the interface and bulk PDE. This leads to a conceptually very simple approach for treating such coupled problems, in particular for applications with an evolving interface.

The results in the remainder of this section are essentially taken from \cite{GrossOlshanskiiReusken2015}. We restrict to a presentation of the key points and refer to \cite{GrossOlshanskiiReusken2015} for further information.

\subsection{Coupled bulk-interface mass transport model} \label{sectmodel}
We outline the physical background of the coupled bulk-interface model that we treat.
 Consider a two-phase incompressible flow system in which two immiscible  fluids occupy   subdomains $\Omega_i(t)$, $i=1,2$, of a given domain $\Omega \subset \mathbb{R}^3$. The outward pointing normal from $\Omega_1$ into $\Omega_2$ is denoted by $\bn$, $\bw(\bx,t)$, $\bx \in \Omega$, $t\in [0,T]$ is the fluid velocity.  The sharp interface between the two fluids is denoted by $\Gamma(t)$.
The interface is passively  advected with the flow. Consider a surfactant that is soluble in both phases and can be adsorbed and desorbed at the interface.   The surfactant \emph{volume} concentration is denoted by $u$, $u_i=u|_{\Omega_i}$, $i=1,2$. The surfactant \emph{area} concentration on $\Gamma$ is denoted by $v$. Change of the surfactant concentration happens due to convection by the velocity field $\bw$, diffusive fluxes in $\Omega_i$, a diffusive flux on $\Gamma$ and fluxes coming from adsorption and desorption. The net flux (per surface area) due to adsorption/desorption between $\Omega_i$ and $\Gamma$ is denoted by $j_{i,a}-j_{i,d}$.  The conservation of mass in a control volume 
results in the following system of \emph{coupled bulk-interface convection--diffusion equations}, where we use dimensionless variables and $\dot u$ denotes the material derivative of $u$:
\begin{equation*} \label{total}
 \begin{split}
   \dot u_i- \eps_i \Delta u_i  & = 0 \quad \text{in}~ \Omega_i(t), ~i=1,2, \\
\dot v + (\Div_\Gamma \bw) v - \eps_\Gamma \Delta_\Gamma v  & =- K [ \eps \bn \cdot \nabla u]_\Gamma   \quad \text{on}~~\Gamma(t), \\
  (-1)^{i} \eps_i \bn \cdot \nabla u_i & = j_{i,a}-j_{i,d} \quad \text{on}~~\Gamma(t), \quad i=1,2.
\end{split}
\end{equation*}
Here $K$ is a strictly positive constant and $\eps_i$, $\eps_\Gamma$ are the diffusion constants.
A standard  constitutive relation for modeling the adsorption/desorption is as follows:
\begin{equation*} \label{eq4}
 j_{i,a}-j_{i,d}=  k_{i,a} g_i(v) u_i - k_{i,d} f_i(v),\quad\text{on}~\Gamma,  
\end{equation*}
with $k_{i,a}$, $k_{i,d}$ positive coefficients.  Basic choices for $g$, $f$ are the following
$
g(v)=1,~ f(v)=v~\text{(Henry)},\quad g(v)=1- \frac{v}{v_{\infty}},~f(v)=v~\text{(Langmuir)}.
$
Further options are given in \cite{Ravera}. The resulting model is often used in the literature for describing surfactant behavior, e.g.~\cite{Eggleton,Tasoglu,Chen2014}.

 We consider a further simplification of this model and restrict to the Henry constitutive law $ g(v)=1$,  assume $\Gamma$ to be stationary, i.e., $\bw \cdot \bn=0$ on $\Gamma$ and  $\frac{\partial u}{\partial t}=\frac{\partial v}{\partial t} =0$. After a suitable transformation, which reduces the number of parameters, one obtains the following stationary model problem:
 \begin{equation} \label{Totaltrans}
 \begin{aligned}
 - \eps_i\Delta u_i + \bw \cdot \nabla u_i  & = f_i \quad \text{in}~ \Omega_i, ~i=1,2, \\
 - \eps_\Gamma \Delta_\Gamma v + \bw \cdot \unabla v + K[ \eps \bn \cdot \nabla u]_\Gamma& = g  \quad \text{on}~~\Gamma, \\
  (-1)^{i} \eps_i \bn \cdot \nabla u_i & = u_i- q_i v \quad \text{on}~~\Gamma, \quad i=1,2, \\  \bn_{\Omega} \cdot \nabla u_2  & =0 \quad \text{on}~~\partial \Omega, \\ \text{with} ~~q_i & = \frac{ k_{i,d}}{k_{1,a}+ k_{2,a}} \in [0,1].
\end{aligned}
\end{equation}
The data $f_i$ and $g$ must satisfy the consistency condition
\begin{equation} \label{consfg}
  K\big( \int_{\Omega_1} f_1 \, d\bx + \int_{\Omega_2} f_2 \, d\bx\big) + \int_\Gamma g \, ds =0.
 \end{equation}
\noindent\textbf{Well-posed weak formulation}. As a basis for the TraceFEM we briefly discuss a well-posed weak formulation of the model bulk-surface model problem \eqref{Totaltrans}. We introduce some further notation. For $u \in H^1(\Omega_1 \cup \Omega_2)$ we also write $u=(u_1,u_2)$ with $u_i= u_{|\Omega_i} \in H^1(\Omega_i)$. We use the following scalar products:
\begin{align*}
(f,g)_\omega & := \int_{\omega}fg\,dx,~~~\|f\|_\omega^2:=(f,f)_\omega,\quad ~\omega \in \{\Omega,\Omega_i,\Gamma\},\\
(\nabla u,\nabla w)_{\Omega_1 \cup \Omega_2} &:=\sum_{i=1,2}\int_{\Omega_i}\nabla u_i \cdot \nabla w_i\,dx, \quad u,w \in H^1(\Omega_1 \cup \Omega_2).
\end{align*}
In the original (dimensional) variables a natural condition is conservation of total mass, i.e. $(u_1,1)_{\Omega_1}+(u_2,1)_{\Omega_2}+(v,1)_\Gamma=m_0$, with $m_0 >0$ the initial total mass. Due to the transformation of variables we obtain the corresponding natural gauge condition
\begin{equation}\label{Gau}
 K (1+r)(u_1,1)_{\Omega_1}+ K(1+\frac{1}{r})(u_2,1)_{\Omega_2} + (v,1)_\Gamma=0, \quad r= \frac{ k_{2,a}}{k_{1,a}}.
\end{equation}
Define the product spaces
\begin{align*}
  \bV & =H^1(\Omega_1 \cup \Omega_2) \times H^1(\Gamma), \quad \|(u,v)\|_{\bV}=\big( \|u\|_{1, \Omega_1 \cup \Omega_2}^2 +\|v\|_{1,\Gamma}^2\big)^\frac12, \\ \bVt & = \{\, (u,v) \in \bV\,:\,(u,v)~~\text{satisfies}~\eqref{Gau}\,\}.
\end{align*}

To obtain the weak formulation, we multiply the bulk and surface equation in \eqref{Totaltrans} by test functions from $\bV$, integrate by parts and use interface and boundary conditions. The resulting weak formulation
reads: Find $(u,v)\in \bVt$ such that for all $(\eta,\zeta)\in \bV$:
\begin{align}
 a((u,v);(\eta,\zeta)) & = (f_1,\eta_1)_{\Omega_1}+(f_2,\eta_2)_{\Omega_2} +(g,\zeta)_{\Gamma} ,\label{weak} \\
  a((u,v);(\eta,\zeta)) &:= (\eps\nabla u,\nabla \eta)_{\Omega_1 \cup \Omega_2} + (\bw \cdot \nabla u,\eta)_{\Omega_1\cup \Omega_2} + \eps_\Gamma(\unabla v,\unabla \zeta)_{\Gamma}  \nonumber \\  & + (\bw \cdot \unabla v,\zeta)_\Gamma
   +  \sum_{i=1}^2(u_i-q_iv, \eta_i- K \zeta)_\Gamma. \nonumber
\end{align}
In \cite{GrossOlshanskiiReusken2015} the following well-posedness result is proved.
\begin{theorem}\label{Th_wp} For any $f_i\in L^2(\Omega_i)$, $i=1,2$, $g\in L^2(\Gamma)$ such that \eqref{consfg} holds, there exists a unique
solution $(u,v) \in \bVt$ of \eqref{weak}. This solution satisfies the a-priori estimate
\begin{equation*}\label{apr_est1}
\|(u,v)\|_{\bV}\le C \|(f_1,f_2,g)\|_{\bV'}\le c (\|f_1\|_{\Omega_1} +\|f_2\|_{\Omega_2} +\|g\|_{\Gamma}),
\end{equation*}
with  constants $C,c$ independent of $f_i$, $g$ and $q_1,q_2 \in [0,1]$.
\end{theorem}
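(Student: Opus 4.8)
\medskip
\noindent\textbf{Proof plan.} The plan is to apply an inf--sup (Banach--Ne\v{c}as--Babu\v{s}ka / Lax--Milgram) argument on the constrained trial space $\bVt$. I would first record the structural fact $\bV=\bVt\oplus\mathbb{R}(K,K,1)$ and check that, on $\bVt$, the form annihilates this mean-mode test direction: since each interface factor $\eta_i-K\zeta=K-K=0$, a direct computation (using $\Div\bw=0$, $\bw\cdot\bn=0$, $\Div_\Gamma\bw=0$ for the convection parts) gives $a((u,v);(K,K,1))=0$ for all $(u,v)\in\bVt$. Testing \eqref{weak} against $(K,K,1)$ then reduces to $0=F(K,K,1)$, which is exactly the compatibility condition \eqref{consfg}; hence it suffices to seek $(u,v)\in\bVt$ with \eqref{weak} holding for all test functions in $\bVt$. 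Continuity of $a$ on $\bV$ is routine: Cauchy--Schwarz, the trace inequality $\|w_i\|_{L^2(\Gamma)}\lesssim\|w_i\|_{H^1(\Omega_i)}$ for the interface terms, and $\bw\in L^\infty$ for the convection terms, with a constant obviously uniform in $q_1,q_2\in[0,1]$.

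The core of the argument, and the step I expect to be hardest, is the coercivity/inf--sup bound, because the interface form $\sum_i(u_i-q_iv,\eta_i-K\zeta)_\Gamma$ is \emph{not} sign-definite when tested naively with $(u,v)$. My remedy is to pair $(u,v)\in\bVt$ with a componentwise \emph{weighted} test $(\eta,\zeta)=(\lambda_1u_1,\lambda_2u_2,\mu v)\in\bV$, choosing $\lambda_i=K\mu/q_i$ so that $\lambda_iu_i-K\mu v=\tfrac{K\mu}{q_i}(u_i-q_iv)$. With this choice the convection contributions vanish on the diagonal (again by $\Div\bw=0$, $\bw\cdot\bn=0$, $\Div_\Gamma\bw=0$), each interface summand collapses to a nonnegative square $\tfrac{K\mu}{q_i}\|u_i-q_iv\|_{L^2(\Gamma)}^2$, and the diffusion terms contribute $\sum_i\eps_i\lambda_i\|\nabla u_i\|_{\Omega_i}^2+\eps_\Gamma\mu\|\unabla v\|_\Gamma^2$. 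Altogether
\[
 a((u,v);(\eta,\zeta))\;\gtrsim\;\sum_i\|\nabla u_i\|_{\Omega_i}^2+\|\unabla v\|_\Gamma^2+\sum_i\|u_i-q_iv\|_{L^2(\Gamma)}^2 .
\]
Since $a(\cdot;(K,K,1))$ vanishes on $\bVt$, the weighted test may be projected back into $\bVt$ without changing the left-hand side, while $\|(\eta,\zeta)\|_\bV\lesssim\|(u,v)\|_\bV$; so this estimate furnishes the inf--sup pairing once its right-hand side is shown to dominate $\|(u,v)\|_\bV^2$.

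Upgrading this seminorm to the full $\bV$-norm is where the gauge condition \eqref{Gau} enters, via a Peetre--Tartar compactness argument: it suffices to verify that the right-hand side above vanishes on $\bVt$ only for $(u,v)=0$. If $\nabla u_i=0$, $\unabla v=0$ and $u_i=q_iv$ on $\Gamma$, then $u_i\equiv a_i$ and $v\equiv b$ are constants with $a_i=q_ib$; inserting these into \eqref{Gau} yields $b\big[K(1+r)|\Omega_1|q_1+K(1+\tfrac1r)|\Omega_2|q_2+|\Gamma|\big]=0$, and since the bracket is strictly positive we get $b=0$ and $a_i=0$. This gives coercivity of $a$ on $\bVt$, hence by Lax--Milgram a unique $(u,v)\in\bVt$ and the a-priori bound.

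The remaining delicate point is the claimed \emph{uniformity} of the constants over all $q_1,q_2\in[0,1]$, including the degenerate endpoints $q_i=0$, for which the weight $\lambda_i=K\mu/q_i$ blows up. For $q_i=0$ the interface flux is simply $u_i$, the $1/q_i$ weight is unnecessary, and one tests that block with $\lambda_i\simeq1$, absorbing the cross term $-K\mu(u_i,v)_\Gamma$ by Young's inequality; the resulting $\|v\|_{L^2(\Gamma)}^2$ is then controlled through the surface gradient and the gauge. To make the independence of the constants from $q_1,q_2$ transparent and to avoid tracking the $q_i\to0$ limit term by term, I would package the preceding estimate as a G\aa rding inequality and combine it with the injectivity just established (kernel pinning via \eqref{Gau}) through the Fredholm alternative, which yields existence, uniqueness and the stability bound with constants manifestly independent of $q_1,q_2$.
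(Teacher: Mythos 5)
First, a caveat about the comparison itself: the paper does not prove Theorem~\ref{Th_wp} here at all — it cites \cite{GrossOlshanskiiReusken2015} — and the only structural information it records is the discrete counterpart of that proof: continuity \eqref{conth} together with an inf-sup bound \eqref{infsup2h} over a \emph{gauged} test space $\bV_{h,\alpha}$, ``derived along the same lines as for the continuous problem,'' with constants independent of $q_1,q_2\in[0,1]$. Your skeleton matches that architecture: splitting off the direction $(K,K,1)$ is legitimate (under the model's assumptions $\Div\bw=0$, $\bw\cdot\bn=0$, $\Div_\Gamma\bw=0$ the form annihilates it for \emph{every} $(u,v)\in\bV$, and $F(K,K,1)=0$ is exactly \eqref{consfg}); componentwise weighted copies of $(u,v)$ do land in a differently gauged space $\bV_\alpha$, which is precisely why \eqref{infsup2h} is formulated with a test gauge $\alpha$ different from the trial gauge; and the gauge \eqref{Gau} is indeed what upgrades the seminorm to the full $\bV$-norm (your kernel computation, with the bracket $K(1+r)q_1|\Omega_1|+K(1+\tfrac1r)q_2|\Omega_2|+|\Gamma|$, is correct).

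The genuine gap is the uniformity in $q_1,q_2\in[0,1]$, which is part of the assertion, and your proposal cannot deliver it as written. Two reasons. (i) Your weights $\lambda_i=K\mu/q_i$ blow up as $q_i\to0^+$, so the inf-sup constant you obtain degenerates like $\min(q_1,q_2)$ on all of $(0,1]^2$; treating the exact endpoint $q_i=0$ as a separate case does not repair the non-uniform behavior for small positive $q_i$. (ii) Neither of your proposed rescues supplies uniform constants: the Peetre--Tartar argument, run at fixed $q$, produces a norm-equivalence constant $C(q)$ with no bound over the parameter square, and the Fredholm alternative gives existence and uniqueness from injectivity but its stability constant is non-constructive and a priori $q$-dependent — ``manifestly independent of $q_1,q_2$'' is precisely what it does not give. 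The repair is quantitative rather than soft. First prove the norm equivalence on $\bVt$ directly: split $u_i,v$ into means plus fluctuations, bound the fluctuations by Poincar\'e and trace inequalities, and solve the resulting linear system for the three means using that the gauge coefficient above is bounded below by $|\Gamma|>0$ \emph{uniformly} in $q\in[0,1]^2$; this yields a constant $C_N$ independent of $q$ (your kernel argument already contains this observation, but you never use it quantitatively). Then cap the weights: fix a small $\mu$ and take $\lambda_i=K\mu/q_i$ when $q_i\ge\mu$ but $\lambda_i=K$ when $q_i<\mu$. The interface terms are then exact squares except for cross terms whose negative contribution is of size $K^2\mu^2\|v\|_{L^2(\Gamma)}^2$, which the uniform norm equivalence absorbs once $\mu\lesssim 1/(KC_N)$; all constants become explicit and uniform, and the compactness/Fredholm machinery can be dropped entirely. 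Finally, a terminological but not harmless point: what this construction yields is an inf-sup (Banach--Ne\v{c}as--Babu\v{s}ka) bound, not coercivity — the form $a((u,v);(u,v))$ is genuinely indefinite because of the interface term — so Lax--Milgram does not apply, and in this non-symmetric infinite-dimensional setting you must also verify non-degeneracy of the transposed form (for every nonzero test pair some trial pair gives a nonzero value), which your proposal never addresses; it follows from the same capped-weight construction applied to the transposed problem.
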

\subsection{Trace Finite Element Method} \label{sectmethodbulk}
In this section we explain a TraceFEM for the discretization of the problem \eqref{weak}. We assume an interface approximation based on the level set function as in \eqref{eq:Gamma}, \eqref{eq:approx_phi}, \eqref{eq:Gamma_h}, i.e., for the interface approximation we take:
\begin{equation} \label{eq:Gamma_hA}
\Gamma_h= \{\, x \in \Omega\,:\,\phi_h(x)=0\,\}, \quad \text{with}~~\phi_h \in V_{h,k}.
\end{equation}
Note that for $k=1$ (linear FE approximation $\phi_h$ of $\phi$) this $\Gamma_h$ is easy to compute, but for $k>1$ the (approximate) reconstruction of $\Gamma_h$ is a non-trivial problem, cf. the discussion in section~\ref{sectGammaapprox}. Furthermore  we introduce the bulk subdomain approximations
\begin{equation*}
\Omega_{1,h}:= \{\, \bx \in  \Omega\,:\,\phi_h(\bx) < 0\, \},\quad \Omega_{2,h}:= \{\, \bx \in  \Omega\,:\,\phi_h(\bx) > 0\, \}.
\end{equation*}
From \eqref{eq:approx_phi} and properties of $\phi$ it follows that
$
{\rm dist}(\Gamma_h , \Gamma)\leq c h^{k+1}
$
holds, cf.~\eqref{resdist}.
We use the standard space of all continuous piecewise polynomial functions of  degree $m \ge 1$ with respect to a shape regular triangulation  $\T_h$ on $\Omega$, cf.~\eqref{defVm}: $V_{h}^{\rm bulk}:=V_{h,m}$.
We now define three \emph{trace spaces} of finite element functions:
\begin{equation*}\label{FEspace}
\begin{aligned}
V_{h}^\Gamma &:=\{v\in C(\Gamma_h)\,:\, v=w|_{\Gamma_h}~~\text{for some}~w\in V_{h}^{\rm bulk}\},\\
V_{i,h}&:=\{v\in C(\Omega_{i,h})\,:\, v=w|_{\Omega_{i,h}}~~\text{for some}~w\in V_{h}^{\rm bulk}\},~~i=1,2.
 \end{aligned}
\end{equation*}
We need the  spaces $V_{\Omega,h}=V_{1,h}\times V_{2,h}$ and $\bV_{h}= V_{\Omega,h}\times V_{h}^\Gamma \subset H^1(\Omega_{1,h} \cup \Omega_{2,h}) \times H^1(\Gamma_h)$. The space $V_{\Omega,h}$ is studied in many papers on the so-called  cut finite element method or XFEM \cite{Hansbo02,Hansbo04,Belytschko03,Fries}. The trace space $V_{h}^\Gamma$  is the surface trace space treated in section~\ref{sectbasic}.

We consider the finite element bilinear form on $\bV_{h}\times\bV_{h}$, which results from the bilinear form of
the differential problem using integration by parts in advection terms and further replacing $\Omega_{i}$ by $\Omega_{i,h}$ and
$\Gamma$ by $\Gamma_h$:
\[
 \begin{split}
a_h((u,v);(\eta,\zeta)) &= \sum_{i=1}^2\left\{ \eps_i (\nabla u,\nabla \eta)_{\Omega_{i,h}} + \frac12\left[(\bw_h \cdot \nabla u,\eta)_{\Omega_{i,h}}-(\bw_h \cdot \nabla \eta, u)_{\Omega_{i,h}}\right]\right\}\\  & + \eps_\Gamma(\unablah v,\unablah \zeta)_{\Gamma_h} + \frac12\left[(\bw_h \cdot \unablah v,\zeta)_{\Gamma_h}-(\bw_h \cdot \unablah \zeta,v)_{\Gamma_h}\right]\\  & + \sum_{i=1}^2(u_i- q_i v,\eta_i-K\zeta)_{\Gamma_h}.
\end{split}
\]
In this formulation we use the same quantities as in \eqref{weak}, but with $\Omega_i$, $\Gamma$ replaced by $\Omega_{i,h}$ and $\Gamma_h$, respectively.
Let $g_h\in L^2(\Gamma_h)$, $f_h\in L^2(\Omega)$ be given and satisfy
$
K(f_h,1)_{\Omega}+(g_h,1)_{\Gamma_h}=0.
$
As discrete gauge condition we introduce, cf. \eqref{Gau},
\begin{equation*} \label{Gaugediscr}
 K(1+r)(u_h,1)_{\Omega_{1,h}}+ K(1+\frac{1}{r})(u_h,1)_{\Omega_{2,h}} + (v_h,1)_{\Gamma_h}=0, \quad r= \frac{ k_{2,a}}{k_{1,a}}.
\end{equation*}
Furthermore, define
\[
\bV_{h,\alpha}:=\{\, (\eta,\zeta)\in \bV_h \,:\,\alpha_1 (\eta,1)_{\Omega_{1,h}} + \alpha_2 (\eta,1)_{\Omega_{2,h}} + (\zeta,1)_{\Gamma_h}=0\},\quad
\]
for arbitrary (but fixed) $\alpha_1, \alpha_2\ge 0$, and $\bVt_{h}:=\bV_{h,\alpha},$ with $\alpha_1=K(1+r)$, $\alpha_2=K(1+\frac{1}{r})$.
The TraceFEM is as follows: Find $(u_h,v_h)\in \bVt_h$ such that
\begin{equation} \label{weakh}
a_h((u_h,v_h);(\eta,\zeta))=(f_h,\eta)_{\Omega}+(g_h,\zeta)_{\Gamma_h}\quad\text{for all}~(\eta,\zeta)\in\bV_h.
\end{equation}
\noindent\textbf{Discretization error analysis}. In \cite{GrossOlshanskiiReusken2015} an error analysis of the TraceFEM \eqref{weakh} is given. Below we give a main result and discuss the key ingredients of the analysis.
In the finite element space we use the norm given by
\[ \|(\eta,\zeta)\|_{\bV_h}^2:= \|\eta\|_{H^1(\Omega_{1,h} \cup \Omega_{2,h})}^2+ \|\zeta\|_{H^1(\Gamma_h)}^2, \quad (\eta,\zeta) \in H^1(\Omega_{1,h} \cup \Omega_{2,h}) \times H^1(\Gamma_h).
\]
We need smooth extension $u^e$ of $u$ and $v^e$ of $v$. For the latter we take the constant extension along normals as in \eqref{notat} and $u^e$ is taken as follows. We denote by $E_i$ a linear bounded extension operator $H^{k+1}(\Omega_i)\to H^{k+1}(\mathbb{R}^3)$. For a piecewise smooth function $u\in H^{k+1}(\Omega_1\cup\Omega_2)$,  we denote by $u^e$ its
``transformation'' to a  piecewise smooth function $u^e\in H^{k+1}(\Omega_{1,h}\cup\Omega_{2,h})$ defined by
\begin{equation*} \label{defexti}
u^e=\left\{\begin{array}{ll}
E_1(u|_{\Omega_1})&\quad \text{in}~~  \Omega_{1,h}\\
E_2(u|_{\Omega_2})&\quad \text{in}~~  \Omega_{2,h}.
\end{array}\right.
\end{equation*}
The main discretization error estimate is given in the next theorem.
\begin{theorem}\label{Th_Conv1}  Let the solution $(u,v)\in\bVt$ of \eqref{weak} be sufficiently smooth. For the finite element solution $(u_h,v_h)\in\bVt_h$ of \eqref{weakh} the following error estimate holds:
\begin{equation}\label{err_estH1}
\|(u^e-u_h,v^e-v_h)\|_{\bV_h}  \lesssim h^m \big(\|u\|_{H^{m+1}(\Omega)} + \|v\|_{H^{m+1}(\Gamma)}\big)  + h^k \big(\|f\|_{\Omega} + \|g\|_{\Gamma}\big),
\end{equation}
where  $m$ is the degree of the finite element polynomials and $k$ the geometry approximation order  defined in \eqref{eq:Gamma_hA}.
\end{theorem}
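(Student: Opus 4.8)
The plan is to follow the Strang/Cé\-a framework used above for the Laplace--Beltrami problem (Lemma~\ref{Strang}), now adapted to the coupled, non-symmetric, gauge-constrained setting; the three ingredients I would assemble are a uniform discrete stability estimate for $a_h(\cdot;\cdot)$ on the constrained space $\bVt_h$, a best-approximation bound for $(u^e,v^e)$ in $\bV_h$, and a consistency bound quantifying how far the extended exact solution is from solving the discrete equations. I would begin with \emph{discrete stability}. Because the advection terms are written in skew-symmetric form, testing $a_h((u_h,v_h);(u_h,v_h))$ annihilates all first-order contributions and leaves the diffusion energy $\sum_i \eps_i\|\nabla u_{i,h}\|_{\Omega_{i,h}}^2 + \eps_\Gamma\|\unablah v_h\|_{\Gamma_h}^2$ together with the coupling term $\sum_i (u_{i,h}-q_i v_h,\, u_{i,h}-K v_h)_{\Gamma_h}$. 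Since this coupling term is not sign-definite in $v_h$, I would transfer the continuous inf-sup argument behind Theorem~\ref{Th_wp} to the discrete level, using the trace inequality of Lemma~\ref{LemHansbo} to control the interface contributions by bulk norms, a discrete Poincar\'e inequality on the cut subdomains, and the gauge condition defining $\bVt_h$ to remove the constant kernel, all with constants independent of how $\Gamma_h$ and $\Omega_{i,h}$ cut $\T_h$.

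With stability in hand, the error $\|(u^e-u_h,v^e-v_h)\|_{\bV_h}$ is bounded, exactly as in \eqref{Strangbound}, by the best-approximation error plus the consistency error. For the \emph{approximation} term I would interpolate $u^e$ in the bulk cut spaces $V_{i,h}$ and $v^e$ in the surface trace space $V_h^\Gamma$; combining the interior interpolation estimate \eqref{interpol} with the trace estimate of Lemma~\ref{LemHansbo} (as already packaged in Lemma~\ref{intertrace}) and the norm-equivalence \eqref{H2} for the normal extensions yields the contribution $h^m\big(\|u\|_{H^{m+1}(\Omega)}+\|v\|_{H^{m+1}(\Gamma)}\big)$.

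For the \emph{consistency} term I would estimate $a_h((u^e,v^e);(\eta,\zeta)) - (f_h,\eta)_\Omega - (g_h,\zeta)_{\Gamma_h}$ by comparing it with the continuous identity \eqref{weak}. This forces a change of integration domains, $\Omega_i\to\Omega_{i,h}$ in the bulk terms and $\Gamma\to\Gamma_h$ in the surface and coupling terms, whose defects I would control through $\mathrm{dist}(\Gamma_h,\Gamma)\lesssim h^{k+1}$ (cf.\ \eqref{resdist}), the surface-measure estimate \eqref{estmu}, and the normal error $\|n-n_h\|_{L^\infty(\Gamma_h)}\lesssim h^k$ of Assumption~\ref{ass2}. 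The dominant defect, arising from the interface flux $\eps\,\bn\cdot\nabla u$ and the coupling integrals evaluated on the shifted interface with the perturbed normal, is of order $h^k$ and produces the $h^k(\|f\|_\Omega+\|g\|_\Gamma)$ term; the data error built from the assumed approximations $f_h,g_h$ is absorbed at the same order.

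\textbf{Main obstacle.} I expect the decisive difficulty to be the uniform discrete stability of the first step: unlike the coercive Laplace--Beltrami form, $a_h$ here is only inf-sup stable through the interplay of bulk diffusion, surface diffusion, and the indefinite coupling term, and this stability must be shown to be \emph{robust with respect to the cut position}, with no blow-up on small-measure surface pieces $\Gamma_T$ or on thin cut subdomains. Bounding the interface and coupling terms by the bulk energy via Lemma~\ref{LemHansbo} \emph{without} losing powers of $h$, while simultaneously accommodating the gauge constraint, is the technically delicate heart of the argument; the second most demanding piece is the geometric consistency estimate, in particular controlling the bulk-flux mismatch over the narrow region between $\Gamma$ and $\Gamma_h$.
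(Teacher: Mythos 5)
Your skeleton --- discrete inf-sup stability on the gauge-constrained space, best approximation via \eqref{interpol}, Lemma~\ref{LemHansbo} and \eqref{H2}, and a geometric consistency term of order $h^k$, all assembled through a Strang-type argument --- is exactly the route taken in the paper: continuity \eqref{conth}, the inf-sup estimate \eqref{infsup2h} (which the paper states is ``derived along the same lines as for the continuous problem'', i.e.\ precisely your plan of transferring the argument behind Theorem~\ref{Th_wp}, with the test space $\bV_{h,\alpha}$ for a suitably chosen $\alpha$), interpolation bounds, and a standard Strang lemma. Your diagnosis of the stability step is also accurate: the skew-symmetrized advection terms vanish under symmetric testing, and the indefinite coupling term $\sum_i(u_{i,h}-q_i v_h,\,u_{i,h}-Kv_h)_{\Gamma_h}$ is what forces an inf-sup rather than a coercivity argument.

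Where your plan is materially thinner than the paper is the consistency step. You propose to ``compare with the continuous identity \eqref{weak}'' and to control the change of domains by the surface estimates \eqref{resdist}, \eqref{estmu} and Assumption~\ref{ass2}. But \eqref{weak} only accepts test functions in $\bV=H^1(\Omega_1\cup\Omega_2)\times H^1(\Gamma)$, while the discrete test pair $(\eta,\zeta)$ lives on $(\Omega_{1,h}\cup\Omega_{2,h})\times\Gamma_h$; for the surface part the closest-point lift does the transfer, but for the bulk part there is no canonical lift, and the surface estimates you cite say nothing about the volumetric mismatch. The paper's device is a global bijection $\Phi_h:\Omega\to\Omega$ with $\Phi_h(\Omega_{i,h})=\Omega_i$ satisfying \eqref{mapping}, together with the extension operators $E_i$ defining $u^e$ on the discrete subdomains and the comparison estimates \eqref{differ_est}--\eqref{differ_est3} showing that $u\circ\Phi_h$ is $O(h^{k+1})$-close to $u^e$. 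These ingredients produce the approximate Galerkin relation \eqref{ApproxGal},
\begin{equation*}
a_h(U^e-U_h;\Theta_h)=a_h(U^e;\Theta_h)-a(U;\Theta_h\circ\Phi_h^{-1}),
\end{equation*}
whose right-hand side is then bounded by $c\,h^{k}\bigl(\|f\|_\Omega+\|g\|_\Gamma\bigr)\|\Theta_h\|_{\bV_h}$, yielding the $h^k$ term in \eqref{err_estH1}. You correctly flag the ``bulk-flux mismatch over the narrow region'' as the second hard point, but your outline offers no mechanism to handle it; the construction of $\Phi_h$ and the resulting approximate Galerkin relation are the missing ingredients that turn your outline into the paper's proof.
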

\ \\[1ex]
An optimal order $L^2$-norm estimate is also given in \cite{GrossOlshanskiiReusken2015}. We outline the key ingredients used in the proof of Theorem~\ref{Th_Conv1}.

A \emph{continuity estimate} is straightforward: There is a constant $c$ independent of $h$ such that
\begin{equation}\label{conth}
a_h((u,v);(\eta,\zeta))\le c\|(u,v)\|_{\bV_h}\|(\eta,\zeta)\|_{\bV_h }
\end{equation}
for all $(u,v), (\eta,\zeta)\in H^1(\Omega_{1,h}\cup \Omega_{2,h})\times H^1(\Gamma_h)$.
A \emph{ discrete inf-sup stability result}  can be derived along the same lines as for the continuous problem:
For any  $q_1,q_2 \in [0,1]$, there exists  $\alpha$  such that
\begin{equation}\label{infsup2h}
\inf_{(u,v)\in\bVt_h}\sup_{(\eta,\zeta)\in\bV_{h,\alpha}}\frac{a_h((u,v);(\eta,\zeta))}{\|(u,v)\|_{\bV_h}\|(\eta,\zeta)\|_{\bV_h}}
\ge C_{st}>0,
\end{equation}
with a positive constant $C_{st}$ independent of $h$ and of $q_1,q_2 \in [0,1]$.

For the analysis of the consistency error (geometry approximation) we need to be able to compare functions on the subdomains $\Omega_i$ and the interface $\Gamma$ to their  corresponding approximations on $\Omega_{i,h}$ and $\Gamma_h$. For this one needs a ``suitable'' bijection $\Phi_h\,: \Omega \to \Omega$ with the property $\Phi_h(\Omega_{i,h})=\Omega_i$. Such a mapping is constructed in \cite{GrossOlshanskiiReusken2015}. It has the smoothness properties   $\Phi_h\in H^{1,\infty}(\Omega)^3$,
$\Phi_h\in H^{1,\infty}(\Gamma_h)^3$.  Furthermore, for $h$ sufficiently small the estimates
\begin{equation}\label{mapping}
\|\mbox{\rm id}-\Phi_h\|_{L^\infty(\Omega)}+ h \|\bI-\mathrm{D}\Phi_h\|_{L^\infty(\Omega)} + h\|1-\mathrm{det}(\mathrm{D}\Phi_h)\|_{L^\infty(\Omega)} \le c\,h^{k+1}
\end{equation}
hold, where $\mathrm{D}\Phi_h$ is the Jacobian matrix. This mapping is crucial in the analysis of the consistency error. The function $u_i \circ \Phi_h$ defines an extension of $u_i\in H^1(\Omega_i)
$ to $u_i^{\rm ex} \in H^1(\Omega_{i,h})$, which has (even for $u_i \in H^m(\Omega_i)$ with $m >1$) only the (low) smoothness property $H^{1, \infty}(\Omega_{i,h})$. This is not sufficient for getting higher order interpolation estimates. One can, however, show that $ u \circ \Phi_h$ is close to the smooth extension $u^e$, introduced above, in the following sense:
\begin{align}\label{differ_est}
\|u\circ \Phi_h-u^e\|_{\Omega_{i,h}} & \le c h^{k+1} \|u\|_{H^1(\Omega_i)},
 \\
\label{differ_est2}
\|(\nabla u)\circ \Phi_h-\nabla u^e\|_{\Omega_{i,h}} & \le c h^{k+1}  \|u\|_{H^2(\Omega_i)},
 \\
\label{differ_est3}
\|u\circ \Phi_h-u^e\|_{\Gamma_h} & \le c h^{k+1}  \|u\|_{H^2(\Omega_i)},
\end{align}
for $i=1,2,$ and for all $u\in H^2(\Omega_i)$.

Now let $(u,v) \in \bVt$ be the solution of the weak formulation \eqref{weak} and $(u_h,v_h)\in\bVt_h$ the discrete solution of \eqref{weakh}, with suitable data extension (cf.~\cite{GrossOlshanskiiReusken2015}) $f_h$ and $g_h$. We use a compact notation $U:=(u,v)=(u_1,u_2,v)$ for the solution of \eqref{weak}, and similarly $U^e=(u^e,v^e)$, $U_h:=(u_h,v_h)\in \bV_h$ for the solution of \eqref{weakh}, $\Theta=(\eta,\zeta) \in H^1(\Omega_{1,h}\cup \Omega_{2,h}) \times H^1(\Gamma_h)$.
We then get the following \emph{approximate Galerkin relation}:
\begin{equation} \label{ApproxGal}
 a_h(U^e-U_h;\Theta_h)=F_h(\Theta_h):= a_h(U^e;\Theta_h)-a(U;\Theta_h \circ \Phi_h^{-1}) \quad \text{for all}~~\Theta_h \in \bV_h.
\end{equation}
Using the properties \eqref{mapping}, \eqref{differ_est}-\eqref{differ_est3} of the mapping $\Theta_h$ and techniques very similar to the ones used in the consistency error analysis in section~\ref{sectconsist} one can show that, provided the solution $(u,v)$ of \eqref{weak} has smoothness $u\in H^2(\Omega_1\cup\Omega_2)$, $v\in H^2(\Gamma)$, the  following holds:
\begin{equation*}\label{consist_est}
|F_h(\Theta)|\le c h^{k} (\|f\|_\Omega+\|g\|_\Gamma\big) \big(\|\eta \|_{H^{1}(\Omega_{1,h} \cup \Omega_{2,h})}+\|\zeta\|_{H^1(\Gamma_h)} \big)
 \end{equation*}
for all   $\Theta=(\eta,\zeta)\in H^{1}(\Omega_{1,h} \cup \Omega_{2,h}) \times H^1(\Gamma_h)$.

Using this consistency error bound, the continuity result \eqref{conth}, the stability estimate \eqref{infsup2h} and suitable interpolation error bounds,  one can apply a standard Strang Lemma and derive an error bound as in \eqref{err_estH1}.
\begin{remark}[Numerical experiments] \rm
 In \cite{GrossOlshanskiiReusken2015} results of an experiment for the method explained above with $m=k=1$ (linear finite elements and piecewise linear interface approximation) are presented which confirm the optimal first order convergence in an $H^1$-norm and optimal second order convergence in the $L^2$-norm.
\end{remark}
\ \\
\noindent
\\[2ex]
{\bf \Large Part II: Trace-FEM for evolving surfaces} \\[2ex]
Partial differential equations posed on evolving surfaces appear in a number of  applications such as two-phase incompressible flows (surfactant transport on the interface) and flow and transport phenomena in biomembranes.
Recently, several numerical approaches for handling such type of problems have been introduced, cf.~\cite{DziukElliottAN}. In this part we consider a  class of parabolic transport problems on smoothly evolving surfaces and treat the TraceFEM for this class of problems.

\section{Weak formulation of PDEs on evolving surfaces}\label{sec:evol}
We consider a class of scalar transport problems that is studied in many papers on surface PDEs. The setting is as follows. Consider a surface $\Gamma(t)$ passively advected by a {given} smooth velocity field $\bw=\bw(x,t)$, i.e. the normal velocity of $\Gamma(t)$ is given by $\bw \cdot \bn$, with
$\bn$ the unit normal on $\Gamma(t)$. We assume that for all $t \in [0,T] $,  $\Gamma(t)$ is a  hypersurface that is  closed ($\partial \Gamma =\emptyset$), connected, oriented, and contained in a fixed domain $\Omega \subset \Bbb{R}^d$, $d=2,3$. We consider $d=3$, but all results have analogs for the case $d=2$. The surface convection--diffusion equation that we consider is given by:
\begin{equation}
\dot{u} + ({\Div}_\Gamma\bw)u -{ \eps_d}\Delta_{\Gamma} u= f\qquad\text{on}~~\Gamma(t), ~~t\in (0,T],
\label{transport}
\end{equation}
 with a prescribed source term $f= f(x,t)$ and homogeneous initial condition $u(x,0)=0$ for $x \in \Gamma_0:=\Gamma(0)$. Here $\dot{u}$ denotes the advective material derivative. 
 Furthermore, note that if $\bw \cdot \bn=0$ (as assumed in section~\ref{sectconvdiff}) we have $\dot{u}= \frac{\partial u}{\partial t}  +\bw \cdot \nabla_\Gamma u $ on a stationary $\Gamma$, cf. \eqref{e:2.2}. The equation \eqref{transport}, with $f\equiv 0$ and a nonzero initial condition, is a standard model for diffusive transport on a surface, with Fick's law for the diffusive fluxes, cf. e.g. \cite{GReusken2011}.

Different weak formulations of \eqref{transport} are known in the literature. For describing these we first introduce some further notation.  The space--time manifold is denoted by
 \[
 \Gs= \bigcup\limits_{t \in (0,T)} \Gamma(t) \times \{t\},\quad  \Gs\subset \Bbb{R}^{4}.
 \]
 We make the smoothness assumptions $\|\bw\|_{L^\infty(\Gs)} < \infty$,  $\|\DivG \bw\|_{L^\infty(\Gs)} < \infty$. Here $\Div_\Gamma =\Div_{\Gamma(t)}$ denotes the tangential divergence on $\Gamma(t)$, $t \in (0,T)$. The standard $H^1$-Sobolev spaces on $\Gamma(t)$ and $\Gs$ are denoted by $H^1(\Gamma(t))$ and $H^1(\Gs)$. In  \cite{Dziuk07} the following weak formulation is studied: determine $u \in H^1(\Gs)$ such that $u(\cdot,0)=u_0$ and for $t \in (0,T)$, a.e.:
  \begin{equation} \label{Dziukweak}
\int_{\Gamma(t)} \dot u v + u v \DivG \bw + \eps_d \unabla u \cdot \unabla v \, ds = 0 \quad \text{for all}~~v \in H^1(\Gamma(t)).
 \end{equation}
Well-posedness of this weak formulation is proved in \cite{Dziuk07}, assuming $u(x,0) \in H^1(\Gamma(0))$. This formulation is the basis for the evolving surface finite element method, developed in a series of papers by Dziuk-Elliott starting from \cite{Dziuk07}. This method is a Lagrangian method where standard surface finite element spaces defined on an approximation of $\Gamma(0)$ are ``transported'' using a discrete approximation $\bw_h(\cdot,t)$ of $\bw(\cdot,t)$ and then used for a Galerkin discretization of \eqref{Dziukweak}. The \emph{space--time TraceFEM } is a Eulerian method that is based on a different weak formulation, that we now introduce.

Due to the identity
\begin{equation}\label{transform}
 \int_0^T \int_{\Gamma(t)} f(s,t) \, ds \, dt = \int_{\Gs} f(s) (1+ (\bw \cdot \bn)^2)^{-\frac12}\, ds,
\end{equation}
the scalar product $(v,w)_0=\int_0^T \int_{\Gamma(t)} v w \, ds \, dt$
induces a norm   that is equivalent to the standard norm on $L^2(\Gs)$. For our purposes,
it is more convenient to consider the $(\cdot,\cdot)_0$ inner product on  $L^2(\Gs)$.
Let $\nablaG= \nabla_{\Gamma(t)}$ denote the tangential gradient for $\Gamma(t)$ and introduce  the Hilbert space
\begin{equation*}
H=\{\, v \in L^2(\Gs)\,:\, \|\nablaG v\|_{L^2(\Gs)} <\infty \, \}, \quad (u,v)_H=(u,v)_0+ (\nablaG u, \nablaG v)_0. \label{inner}
  \end{equation*}
  We consider the material derivative $\dot{u}$ of $u \in H$ as a distribution on $\Gs$:
\[
  \la\dot u,\phi\ra= - \int_0^T \int_{\Gamma(t)} u \dot \phi + u \phi \DivG \bw\, ds \, dt  \quad \text{for all}~~ \phi \in C_0^1(\Gs).
\]
In \cite{ORXsinum} it is shown that $C_0^1(\Gs)$ is  dense in $H$.
 If $\dot{u}$ can be extended to a bounded linear functional on $H$, then $\dot u \in H'$ { and $\langle \dot u, v \rangle = \dot u(v)$ for $v \in H$.} Define the space
\[
  W= \{ \, u\in H\,:\,\dot u \in H' \,\}, \quad \text{with}~~\|u\|_W^2 := \|u\|_H^2 +\|\dot u\|_{H'}^2.
\]
In \cite{ORXsinum}   properties of $H$ and $W$ are analyzed.
Both spaces are Hilbert spaces and smooth functions are  dense in $H$ and $W$.  Furthermore, functions from $W$ have well-defined traces in $L^2(\Gamma(t))$ for $t\in[0,T]$, a.e..
Define
\[
\Wo:=\{\, v \in W\,:\,v(\cdot, 0)=0 \quad \text{on}~\Gamma_0\,\}.
\]
We introduce the symmetric bilinear form
\[
  a(u,v)= \eps_d (\nablaG u, \nablaG v)_0 + (\DivG \bw\, u,v)_0, \quad u, v \in H,
\]
which is  continuous:
$
a(u,v)\le(\eps_d+\alpha_{\infty}) \|u\|_H\|v\|_H$, with $\alpha_{\infty}:=\|\DivG \bw\|_{L^\infty(\Gs)}.
$
The weak space--time formulation of \eqref{transport} reads: Find $u \in \Wo$ such that
\begin{equation} \label{weakformu}
 \la\dot u,v\ra +a (u,v) = (f,v)_0 \quad \text{for all}~~v \in H.
\end{equation}
Well-posedness of \eqref{weakformu} follows from the following lemma derived in \cite{ORXsinum}.
\begin{lemma}\label{la:infsup}
The following properties of the bilinear form $\la\dot u,v\ra + a(u,v)$ hold.\vspace{0.5ex}
\begin{description}
\item{a)} Continuity:
$ | \la\dot u,v\ra + a(u,v)| \le (1+\eps_d+\alpha_{\infty})\|u\|_W \|v\|_H $ for all $u \in W,~v \in H$.
\item{b)} Inf-sup stability:\vspace{-1ex}
 \begin{equation}\label{infsup}
  \inf_{0\neq u \in \overset{\circ}{W}}~\sup_{ 0\neq v \in \overset{\phantom{.}}{H}} \frac{\la\dot u,v\ra + a(u,v)}{\|u\|_W\|v\|_H} \geq c_s>0.
 \end{equation}
\item{c)} The kernel of the adjoint mapping is trivial: 
If $\la\dot u,v\ra + a(u,v)=0$ holds for  all $u \in \Wo$, then $v=0$.
\end{description}
\end{lemma}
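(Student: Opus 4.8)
The plan is to prove the three items separately, using throughout the space--time integration-by-parts (Green) identity for $W$, which I take to be among the properties of $W$ analysed in \cite{ORXsinum} (it follows from the distributional definition of $\dot u$, the transport theorem and density of smooth functions): for $u\in W$,
\begin{equation*}
 2\la\dot u,u\ra + (\DivG\bw\,u,u)_0 = \|u(\cdot,T)\|_{L^2(\Gamma(T))}^2 - \|u(\cdot,0)\|_{L^2(\Gamma(0))}^2 .
\end{equation*}
\textbf{Part (a)} is immediate: by definition of the dual norm $|\la\dot u,v\ra|\le\|\dot u\|_{H'}\|v\|_H$, and by the continuity of $a$ stated just before the lemma $|a(u,v)|\le(\eps_d+\alpha_\infty)\|u\|_H\|v\|_H$. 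Since $\|\dot u\|_{H'}\le\|u\|_W$ and $\|u\|_H\le\|u\|_W$, adding the two estimates gives the factor $(1+\eps_d+\alpha_\infty)$.

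For the inf-sup bound \textbf{(b)} I would use the standard supremizer construction for parabolic space--time forms. Write $b(u,v):=\la\dot u,v\ra+a(u,v)$ and set $\|u\|_0^2:=(u,u)_0$. Fix $0\neq u\in\Wo$ and let $\xi\in H$ be the Riesz representative of $\dot u\in H'$, so $(\xi,w)_H=\la\dot u,w\ra$ for all $w\in H$ and $\la\dot u,\xi\ra=\|\xi\|_H^2=\|\dot u\|_{H'}^2$. Take the test function $v=u+\beta\xi$ with $\beta>0$ to be chosen. The Green identity together with $u(\cdot,0)=0$ gives $\la\dot u,u\ra=\tfrac12\|u(\cdot,T)\|_{L^2(\Gamma(T))}^2-\tfrac12(\DivG\bw\,u,u)_0\ge-\tfrac12\alpha_\infty\|u\|_0^2$, whence $b(u,u)\ge\eps_d\|\nablaG u\|_0^2-\tfrac12\alpha_\infty\|u\|_0^2$; and $b(u,\xi)=\|\dot u\|_{H'}^2+a(u,\xi)\ge\|\dot u\|_{H'}^2-(\eps_d+\alpha_\infty)\|u\|_H\|\dot u\|_{H'}$. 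Combining and applying Young's inequality bounds $b(u,v)$ below by a positive combination of $\|\nablaG u\|_0^2$ and $\|\dot u\|_{H'}^2$ minus a multiple of $\|u\|_0^2$. Since $\|v\|_H\le\|u\|_H+\beta\|\dot u\|_{H'}\lesssim\|u\|_W$, once the indefinite $\|u\|_0^2$ is absorbed one obtains $b(u,v)\gtrsim\|u\|_W^2$, and dividing yields $\sup_{v}b(u,v)/\|v\|_H\ge c_s\|u\|_W$ with $c_s>0$.

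For \textbf{(c)}, suppose $v\in H$ satisfies $\la\dot u,v\ra+a(u,v)=0$ for all $u\in\Wo$. Testing first with $u\in C_0^1(\Gs)\subset\Wo$ and using the distributional definition of the material derivative (product rule, no boundary terms) turns the relation into $\la\dot v,u\ra=\eps_d(\nablaG v,\nablaG u)_0$ for all $u\in C_0^1(\Gs)$; the right-hand side is bounded on $H$, so $\dot v\in H'$, i.e. $v\in W$, and by density the identity extends to all $u\in H$. Next, applying the Green identity to the original relation for \emph{general} $u\in\Wo$ (only the terminal boundary term survives, since $u(\cdot,0)=0$) and inserting the weak equation just found collapses everything to $(u(\cdot,T),v(\cdot,T))_{L^2(\Gamma(T))}=0$ for all $u\in\Wo$; since the trace map $u\mapsto u(\cdot,T)$ is onto $L^2(\Gamma(T))$ this forces $v(\cdot,T)=0$. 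Finally a backward-in-time Gronwall argument closes it: with $E(\tau):=\|v(\cdot,\tau)\|_{L^2(\Gamma(\tau))}^2$, the transport theorem and the weak equation give $E'(\tau)=2\eps_d\|\nablaG v\|_{L^2(\Gamma(\tau))}^2+\int_{\Gamma(\tau)}(\DivG\bw)\,v^2\,\rd s\ge-\alpha_\infty E(\tau)$, so $e^{\alpha_\infty\tau}E(\tau)$ is nondecreasing; with $E(T)=0$ this forces $E\equiv0$, hence $v=0$.

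I expect the main difficulty to lie in part (b), in controlling the \emph{sign-indefinite} zero-order term $(\DivG\bw\,u,u)_0$: because $\Gamma(t)$ is closed there is no spatial Poincar\'e inequality (constants are annihilated by $\nablaG$), so $\|u\|_0^2$ cannot be absorbed by $\eps_d\|\nablaG u\|_0^2$ alone. The remedy is a \emph{time} Poincar\'e / Gronwall estimate exploiting the initial condition $u(\cdot,0)=0$ to bound $\|u\|_0$ by $\|\dot u\|_{H'}$ and $\|\nablaG u\|_0$ (equivalently, an exponential-in-time reweighting that renders the modified form coercive); establishing this estimate and then balancing $\beta$ so the positive contributions dominate is the technical heart of the argument. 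The remaining steps --- the continuity (a), the algebraic manipulations in (b), and the Gronwall step in (c) --- are routine once the Green identity and the trace and density properties of $W$ from \cite{ORXsinum} are in hand.
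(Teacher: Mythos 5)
The paper itself contains no proof of this lemma --- it is quoted from \cite{ORXsinum} --- so I am comparing your proposal against the argument there, which has the same skeleton as yours: the integration-by-parts (Green) identity for $W$, an energy/supremizer argument for the inf-sup bound, and for the adjoint kernel the distributional bootstrap showing $v\in W$, the terminal-trace identity, and backward uniqueness. Your part (a) is correct and immediate. Your part (c) is also correct in structure; the only caveats are that you need density (not surjectivity) of the trace map $u\mapsto u(\cdot,T)$ from $\Wo$ into $L^2(\Gamma(T))$, and that the pointwise-in-time energy identity $E'(\tau)=2\eps_d\|\nablaG v\|_{L^2(\Gamma(\tau))}^2+\int_{\Gamma(\tau)}(\DivG\bw)\,v^2\,ds$ requires the Green identity localized to subintervals $(t_1,t_2)$, which the trace/density theory of \cite{ORXsinum} that you invoke does supply.

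The genuine gap is in part (b), precisely at the step you defer. The order of operations you propose --- form $v=u+\beta\xi$, apply Young, and only afterwards absorb the leftover $-C\|u\|_0^2$ via a time-Poincar\'e inequality $\|u\|_0\lesssim\|\dot u\|_{H'}+\|\nablaG u\|_0$ --- cannot be made to close. That inequality is indeed true on $\Wo$ (cut off in time and apply Gronwall, using that $H$ imposes no time regularity so $u\chi_{(0,t)}\in H$), but its constant $P$ grows at least like $T$ (test with functions constant in space, for which $\nablaG u=0$, so $P\ge cT$ already on a stationary sphere) and exponentially in $\alpha_\infty T$ in general, whereas the indefinite term carries the fixed coefficient $\alpha_\infty/2$. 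Tracking constants, the coefficient of $\|\nablaG u\|_0^2$ in your lower bound is $\eps_d-\alpha_\infty P^2-O(\beta)$, so the argument demands $\eps_d>\alpha_\infty P^2$; this fails for small diffusion or a long time horizon, and no choice of $\beta$ repairs it. The fix is the device you mention only parenthetically, and it must come \emph{first}, not last: put the exponential weight into the test function (equivalently, change variables $u=e^{\lambda t}\tilde u$ with $\lambda>\alpha_\infty$), i.e.\ take $v=e^{-\lambda t}u+\beta\xi$. The Green identity then gives $\la \dot u, e^{-\lambda t}u\ra + a(u,e^{-\lambda t}u)\ \ge\ e^{-\lambda T}\bigl(\tfrac12(\lambda-\alpha_\infty)\|u\|_0^2+\eps_d\|\nablaG u\|_0^2\bigr)$, which is already coercive in $\|\cdot\|_H$; no indefinite term ever appears, and your $\beta$-balancing finishes the proof. (Alternatively one can dispense with the supremizer altogether: set $F:=\la\dot u,\cdot\ra+a(u,\cdot)\in H'$, test with $u\chi_{(0,t)}$, apply Gronwall to obtain $\|u\|_H\lesssim\|F\|_{H'}$, recover $\|\dot u\|_{H'}$ from the equation, and note that $\sup_{v}(\la\dot u,v\ra+a(u,v))/\|v\|_H=\|F\|_{H'}$.) So the exponential reweighting is not an ``equivalent'' repackaging of your absorption step; it is the step that makes the constants work.
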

\smallskip
As a  consequence of  Lemma~\ref{la:infsup} one obtains:
\begin{theorem} \label{mainthm1}
For any $f\in L^2(\Gs)$, the problem \eqref{weakformu} has a unique solution $u\in \Wo$. This solution satisfies the a-priori estimate
\begin{equation*} \label{stabestimate}
\|u\|_W \le c_s^{-1} \|f\|_0.
\end{equation*}
\end{theorem}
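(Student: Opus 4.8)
The plan is to recognize the variational problem \eqref{weakformu} as an abstract inf-sup (Banach--Ne\v{c}as--Babu\v{s}ka) problem on the pair of Hilbert spaces $\Wo$ (trial space) and $H$ (test space), for which Lemma~\ref{la:infsup} supplies precisely the three ingredients needed for well-posedness. Writing $b(u,v):=\la\dot u,v\ra+a(u,v)$ and $F(v):=(f,v)_0$, I would first check that $F$ is a bounded linear functional on $H$: since the norm induced by $(\cdot,\cdot)_0$ is dominated by $\|\cdot\|_H$, Cauchy--Schwarz gives $|F(v)|\le\|f\|_0\|v\|_0\le\|f\|_0\|v\|_H$, so that $F\in H'$ with $\|F\|_{H'}\le\|f\|_0$.

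Next I would introduce the bounded linear operator $B\colon\Wo\to H'$ defined by $\la Bu,v\ra=b(u,v)$; its boundedness is exactly the continuity estimate in part~a) of Lemma~\ref{la:infsup}. The inf-sup bound \eqref{infsup} from part~b) then reads $\|Bu\|_{H'}\ge c_s\|u\|_W$ for all $u\in\Wo$, which shows at once that $B$ is injective, bounded below, and hence has closed range in $H'$. Both uniqueness and the a priori estimate follow immediately from this single bound: any solution $u$ of $Bu=F$ satisfies $c_s\|u\|_W\le\|Bu\|_{H'}=\|F\|_{H'}\le\|f\|_0$, giving $\|u\|_W\le c_s^{-1}\|f\|_0$, and if two solutions existed their difference would lie in the kernel of $B$, forcing it to vanish.

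It remains to establish existence, i.e.\ surjectivity of $B$. Here I would invoke the closed range theorem: since $R(B)$ is already known to be closed, one has $R(B)=H'$ if and only if $R(B)$ is dense, equivalently if and only if its annihilator in $H''\cong H$ is trivial. That annihilator is exactly the set $\{v\in H:\ b(u,v)=0\ \text{for all}\ u\in\Wo\}$, which vanishes by the trivial-adjoint-kernel property in part~c). Therefore $B$ is a bijection of $\Wo$ onto $H'$, the equation $Bu=F$ possesses a unique solution $u\in\Wo$, and the estimate already derived completes the argument.

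The proof is essentially routine once Lemma~\ref{la:infsup} is available; the only step that deserves genuine care is existence, where one must correctly pair the inf-sup bound (yielding injectivity and closed range) with the adjoint-kernel condition (yielding density of the range) to conclude surjectivity. The real analytic difficulty of the problem is thus entirely absorbed into the proof of Lemma~\ref{la:infsup}, where the space--time structure of $\Gs$, the density of $C_0^1(\Gs)$ in $H$, and the trace properties of $W$ enter; the present theorem is merely the harvest of that lemma through the standard abstract machinery.
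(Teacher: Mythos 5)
Your proposal is correct and follows exactly the route the paper intends: the paper derives Theorem~\ref{mainthm1} as an immediate consequence of Lemma~\ref{la:infsup} via the standard Banach--Ne\v{c}as--Babu\v{s}ka (generalized Lax--Milgram) theory, which is precisely the operator-theoretic argument you spell out (continuity gives boundedness of $B$, the inf-sup bound gives injectivity, the a priori estimate and closed range, and the trivial adjoint kernel gives surjectivity). The only implicit detail worth noting is that $\Wo$ is a closed subspace of $W$ (since traces at $t=0$ are well defined and continuous on $W$), so it is complete and the closed-range argument applies.
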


Note that the weak formulation \eqref{weakformu} is on  the \emph{whole space--time manifold} $\Gs$. As a starting point for a finite element Galerkin discretization this may seem not very attractive, because we have a globally coupled space--time  problem. However, we shall see how the space--time TraceFEM leads to a time-stepping algorithm, where only a 2D surface problem is solved on each time step.

\section{Space-time TraceFEM} \label{sectSTTraceFEM}
We take a  partitioning of the time interval:  $0=t_0 <t_1 < \ldots < t_N=T$, with a uniform time step $\Delta t = T/N$. The assumption of a uniform time step is made to simplify the presentation, but is not essential. A time interval is denoted by $I_n:=(t_{n-1},t_n]$. Consider  the partitioning of the space--time volume domain $Q= \Omega \times (0,T] \subset \Bbb{R}^{3+1}$ into time slabs  $Q_n:= \Omega \times I_n$.
The variational formulation in \eqref{weakformu} forms the basis for the space--time TraceFEM that we present in this section. The basic idea is the same as for the TraceFEM explained in
section~\ref{sectTraceFEM}: Per time slab, we use a standard space--time finite element space on a fixed outer triangulation  (which is the tensor product of $\T_h \times I_n$) and then take the trace on an approximation of the space--time surface $\Gs^n$. This trace space is used in a standard DG in time -- CG in space approach applied to \eqref{weakformu}. We first present the method without using any surface approximation, and then address questions related to replacing the exact space--time surface by an approximate one.

\noindent\textbf{Basic form of space--time TraceFEM.}
 Corresponding to each time interval $I_n:=(t_{n-1},t_n]$ we assume a given shape regular tetrahedral triangulation $\T_n$ of the spatial domain $\Omega$. The corresponding spatial mesh size parameter is denoted by $h$. 
Then $\mathcal{Q}_h=\bigcup\limits_{n=1,\dots,N}\T_n\times I_n$ is a subdivision of $Q$ into space--time
prismatic nonintersecting elements. We  call $\mathcal{Q}_h$ a space--time triangulation of $Q$.
Note that this triangulation is not necessarily fitted to the surface $\Gs$. We allow $\T_n$ to vary with $n$ (in practice, during time integration one may wish to adapt the space triangulation) and so
the elements of $\mathcal{Q}_h$ may not  match at $t=t_n$.

For any $n\in\{1,\dots,N\}$, let $V_{n,j}$ be the finite element space of continuous piecewise polynomials of degree $j$  on $\T_n$, cf.~\eqref{defVm}.
We define the \emph{bulk space--time finite element space}:
\begin{equation*} \label{defVn}
W_{\ell,m}:= \{ \,  v(x,t)= \sum_{k=0}^\ell t^k \phi_k(x)~\text{on every}~Q_n,~\text{with}~\phi_k \in V_{n,j},~0\leq k \leq \ell \,\}.
\end{equation*}
This is a standard space--time finite element space on $\mathcal{Q}_h$, with piecewise polynomials that are continuous in space and discontinuous in time. For the well-posedness result and error analysis we define the \emph{surface finite element space} as the space
of traces of functions from $W_h^{\rm bulk}=W_{\ell,m}$ on $\Gs$:
\begin{equation*} \label{deftraceFE}
  W_h^\Gs := \{ \, w:\Gs \to \Bbb{R}\,:\, w=v|_{\Gs}, ~~v \in W_h^{\rm bulk} \, \}.
\end{equation*}
In addition to $a(\cdot,\cdot)$,  we define on  $ W_{\ell,m}^\Gs$ the following bilinear forms:
\begin{align*}
 d(u,v)  =  \sum_{n=1}^N d^n(u,v), \quad d^n(u,v)=([u]^{n-1},v_+^{n-1})_{t_{n-1}},\quad
 \la \dot u ,v\ra_b =\sum_{n=1}^N  \la \dot u_n, v_n\ra,
\end{align*}
and
\[
\la \dot u_h ,v_h\ra_b=\sum_{n=1}^N\int_{t_{n-1}}^{t_n}\int_{\Gamma(t)} (\frac{\partial u_h}{\partial t} +\bw\cdot\nabla u_h)v_h ds\,dt.
\]
The basic form of the space--time TraceFEM is a discontinuous Galerkin method: Find $u_h \in W_h^{\rm bulk}:=W_{\ell,m}$ such that
\begin{equation} \label{brokenweakformu_h}
  \la \dot u_h ,v_h\ra_b +a(u_h,v_h)+d(u_h,v_h) = (f,v_h)_0 \quad \text{for all}~~v_h \in W_h^{\rm bulk}.
\end{equation}
As usual in time-DG methods,  the initial condition for $u_h(\cdot,0)$ is treated in a weak sense.
Obviously the method can be implemented with a time marching strategy. For the implementation of the method one needs an approach to approximate the integrals over $\Gs^n$. This question is briefly  addressed below. Before we come to that, we first introduce a variant of the method in  \eqref{brokenweakformu_h}.

\noindent\textbf{Variant of space--time TraceFEM with stabilization.}
 We first explain a discrete mass conservation property of the scheme \eqref{brokenweakformu_h}. We consider the case that \eqref{transport}  is derived from mass conservation of a scalar quantity with a diffusive flux on $\Gamma(t)$. The original problem then has a nonzero initial condition $u_0$ and a source term $f \equiv 0$.  The solution $u$ of the original problem has the mass conservation property $\bar u(t):=\int_{\Gamma(t)} u \, ds =\int_{\Gamma(0)} u_0 \, ds $ for all $t \in [0,T]$. After a suitable transformation one obtains the equation   \eqref{transport} with a zero initial condition $u_0=0$ and a right hand-side $f$ which has the zero average property $\int_{\Gamma(t)} f \, ds =0 $ for all $t \in [0,T]$. The solution $u$ of \eqref{transport} then has the ``shifted'' mass conservation property $\bar u(t)=0$ for all $t \in [0,T]$.  Taking suitable test functions in the
discrete problem \eqref{brokenweakformu_h} we obtain that the discrete solution $u_h$ has the following (weak) mass conservation property, with $\bar u_h(t):=\int_{\Gamma(t)} u_h \, ds$:
\begin{equation}\label{means}
\bar u_{h,-}(t_{n})=0\quad\text{and}\quad\int_{t_{n-1}}^{t_n} \bar u_h(t)\, dt=0,\quad n=1,2,\dots N.
\end{equation}
Although   \eqref{means} holds,  $\bar u_h(t) \neq 0$ may occur for $ t_{n-1} \le t < t_n$.  We introduce   a \emph{consistent} stabilizing term involving the quantity $\bar u_h (t)$.  More precisely, define
\begin{equation} \label{stabblf}
  a_\sigma(u,v):= a(u,v)+\sigma \int_0^T \bar u(t) \bar v(t) \, dt, \quad \sigma \geq 0.
\end{equation}
Instead of \eqref{brokenweakformu_h} we consider the stabilized version: Find $u_h \in W_h^{\rm bulk}$ such that
\begin{equation} \label{brokenweakformu_h1}
  \la \dot u_h ,v_h\ra_b +a_\sigma(u_h,v_h)+d(u_h,v_h) =(f,v_h)_0 \quad \text{for all}~~v_h \in W_h^{\rm bulk}.
\end{equation}
Taking $\sigma >0$ we expect both a stabilizing effect and an improved discrete mass conservation property, since $\sigma\to\infty$ enforces $\bar u(t)=0$ condition for $t\in[0,T]$. We will explain in section~\ref{sectdiscranalysis} why  the stabilizing term is important for deriving ellipticity of the bilinear form, which is a key ingredient in the error analysis.

\noindent\textbf{Approximation of the space--time surface and matrix-vector representation.}
 Two main implementation issues are the approximation, per time slab, of the space--time integrals in the bilinear form $\la \dot u_h ,v_h\ra_b +a_\sigma(u_h,v_h)$ and the representation of the  finite element trace functions in $ W_h^\Gs$.  One possibility to approximate the integrals, is to make use of the formula \eqref{transform}, converting space--time integrals to surface integrals over $\Gs^n$, and then to
approximate $\Gs^n$ by a ``discrete'' surface $\Gs_h^n$. This is done locally, i.e. time slab per time slab.    In the context of level set methods, we typically have an (accurate) approximation $\phi_h(x,t) \in W_{\ell,k}$, of the level set function $\phi(x,t)$, $t \in I_n$. For  the surface approximation $\Gs_h^n$ one can then take the zero level of $\phi_h$, i.e., we use the space--time analog of \eqref{eq:Gamma_h}:
\begin{equation} \label{eq:Gamma_hST}
\Gs_h^n= \{\, (x,t) \in \Omega\times I_n\,:\,\phi_h(x,t)=0\,\}.
\end{equation}
It is not clear how to represent this surface approximation in a computationally efficient way in the higher order  case $\ell \geq 2$ or $k \geq 2$. This approximation, however is easy to compute for $\ell=k=1$. Then
 $\phi_h$ is a bilinear (in $x$ and $t$) finite element approximation of the level set function $\phi(x,t)$. Within each space--time prism the zero level of  $\phi_h\in W_{1,1}$ can be represented as a union of tetrahedra, cf. \cite{refJoerg}, and standard quadrature formulas can be used.
 Results of numerical experiments with  this treatment of integrals over $\Gs_h^n$ are reported
in \cite{refJoerg,GOReccomas,ORXsinum}. The use of numerical quadratures in time and space separately was suggested in \cite{hansbo2016cut_td} together with  adding a stabilization term to ensure that the resulting problems are well-conditioned. To reduce the ``geometric error'', it may be efficient  to use $\phi_h \in W_{1,1}$ on a finer space--time mesh than the one used in the approximation $u_h$ of $u$, e.g., one additional refinement of the given
outer space--time mesh. Clearly, using this surface approximation, the discretization error can be at most second order. So far, only this type of bilinear space--time surface approximation has been implemented and tested. A higher order surface approximation  method, for example an extension of the isoparametric TraceFEM treated in section~\ref{sectGammaapprox} to a space--time setting, has not been developed, yet.

For the representation of  the finite element  functions in $W_h^\Gs$ it is natural to use traces of the standard nodal basis functions in the volume space--time finite element space $W_{\ell,m}$.   As in the case of TraceFEM on stationary surfaces,  these trace functions in general form (only) a frame in $W_h^\Gs$. A finite element surface solution is represented as a linear combination of the elements from this frame.
Linear systems resulting in every time step  may have more than one solution, but every solution yields the same trace function, which is the unique solution of \eqref{brokenweakformu_h1} in $W_h^\Gs$.  If $\ell=k=1$, $\Delta t \sim h$ and $\|\bw \|_{L^\infty(\Gs^n)} = \mathcal{O}(1)$, then the number of tetrahedra $T \in \T_n$ that are intersected by $\Gamma(t)$, $t \in I_n$, is of the order $\mathcal{O}(h^{-2})$. Hence, per time step the linear systems have $\mathcal{O}(h^{-2})$ unknowns, which is the same complexity as a discretized spatially \emph{two}-dimensional problem. Note that although we derived the method in $\Bbb{R}^{3+1}$, due to the time stepping and the trace operation, the resulting algebraic problems have two-dimensional complexity. Since the algebraic problems have a complexity of (only) $\mathcal{O}(h^{-2})$ it may be efficient to use  a  sparse direct solver for computing the discrete solution. Stabilization procedures, as presented in section~\ref{sectStiffness} for a stationary surface,
  and
further linear algebra aspects of the  space--time TraceFEM have not been studied so far.

The stabilization term   in \eqref{stabblf} does not cause significant additional computational work, as explained in \cite{olshanskii2014error}.

\section{Stability and error analysis  of space--time TraceFEM} \label{sectdiscranalysis}
We outline a framework for the error analysis of the space--time TraceFEM, further details are found in \cite{olshanskii2014error}.
Stability of the method and error bounds are derived with respect to the energy norm:
\[
\enorm{u}_h:=\left(\max_{n=1,\dots,N}\|u_{-}^n\|_{t^n}^2 + \sum_{n=1}^N \|[u]^{n-1}\|_{t_{n-1}}^2+\|u\|_H^2\right)^\frac12.
\]
Using test functions in \eqref{brokenweakformu_h1} that are restrictions of $u_h\in W_h^\Gs$ for time interval $[0,t_n]$ and zero for $t>t_n$ one can derive the following stability result for the bilinear form of the space--time TraceFEM.
\begin{theorem} \label{stabadd}
Assume  $ \sigma \geq  \frac{\eps_d}2\max\limits_{t\in[0,T]}\frac{c_F(t)}{|\Gamma(t)|}$, where $c_F(t)$ is the Poincare--Friedrichs constant for $\Gamma(t)$. Then the following inf-sup estimate holds:
\begin{equation} \label{infsuph}
 \inf_{u_h\in W_h^\Gs} \sup_{v_h\in W_h^\Gs}\frac{\la \dot u_h,v_h\ra_b +a_\sigma(u_h,v_h)+d(u,v)}{\enorm{v_h}_h \enorm{u_h}_h} \geq c_s>0.
\end{equation}
\end{theorem}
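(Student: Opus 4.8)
The plan is to prove \eqref{infsuph} by the classical device for discontinuous-in-time Galerkin schemes: for a given $u_h\in W_h^\Gs$ I would construct an explicit test function, built from $u_h$ itself and from its truncations in time, for which the bilinear form is bounded below by a multiple of $\enorm{u_h}_h^2$. The whole argument rests on one energy identity obtained by testing against $u_h$. Denote by $B(u_h,v_h):=\la\dot u_h,v_h\ra_b+a_\sigma(u_h,v_h)+d(u_h,v_h)$ the bilinear form in \eqref{infsuph}.

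First I would compute $B(u_h,u_h)$. Using the transport (Leibniz) formula $\frac{d}{dt}\int_{\Gamma(t)}\tfrac12 u_h^2\,ds=\int_{\Gamma(t)}(u_h\dot u_h+\tfrac12 u_h^2\,\DivG\bw)\,ds$ on each slab, the material-derivative part $\la\dot u_h,u_h\ra_b$ becomes a telescoping sum of endpoint terms minus $\tfrac12(\DivG\bw\,u_h,u_h)_0$. The upwind form $d(\cdot,\cdot)$ is rewritten with the elementary identity $(a-b)a=\tfrac12 a^2-\tfrac12 b^2+\tfrac12(a-b)^2$, so that its $\tfrac12\|u_{h,+}^{n-1}\|^2$ contributions cancel the negative telescoping endpoints; together with the (weakly imposed) zero initial value $u_{h,-}^0=0$, and adding $a_\sigma$, this yields
\begin{equation*}
B(u_h,u_h)=\tfrac12\|u_{h,-}^N\|_{t_N}^2+\tfrac12\sum_{n=1}^N\|[u_h]^{n-1}\|_{t_{n-1}}^2+\eps_d\|\nablaG u_h\|_0^2+\tfrac12(\DivG\bw\,u_h,u_h)_0+\sigma\int_0^T\bar u_h^2\,dt.
\end{equation*}
Two features of this identity must be repaired. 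The energy norm contains the full $\|u_h\|_H^2=\|u_h\|_0^2+\|\nablaG u_h\|_0^2$, whereas only the gradient appears above; here I would invoke the Poincar\'e--Friedrichs inequality on each $\Gamma(t)$, splitting $u_h$ into its mean and mean-free parts, so that $\eps_d\|\nablaG u_h\|^2$ controls the mean-free part (with constant $c_F(t)$) and $\sigma\bar u_h^2$ controls the mean. The hypothesis $\sigma\ge\frac{\eps_d}{2}\max_t\frac{c_F(t)}{|\Gamma(t)|}$ is exactly the balance making $\tfrac12\eps_d\|\nablaG u_h\|^2+\sigma\bar u_h^2\gtrsim\|u_h\|^2_{L^2(\Gamma(t))}$, so that $\|u_h\|_0^2$ is recovered while half of the diffusion is left to spare. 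The second feature is that the zeroth-order term $\tfrac12(\DivG\bw\,u_h,u_h)_0$ is sign-indefinite, only bounded below by $-\tfrac12\alpha_\infty\|u_h\|_0^2$, and cannot be absorbed pointwise.

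To handle simultaneously this indefinite term and the $L^\infty$-in-time quantity $\max_n\|u_{h,-}^n\|^2$ in $\enorm{\cdot}_h$ (which testing against $u_h$ only delivers at the final time $t_N$), I would repeat the identity on each truncated interval $[0,t_m]$, i.e. test with the admissible function $v_h^{(m)}\in W_h^\Gs$ equal to $u_h$ on $[0,t_m]$ and $0$ on $(t_m,T]$. This produces, for every $m$, an energy inequality for $\tfrac12\|u_{h,-}^m\|^2$ plus the gradient, jump and stabilization contributions on $[0,t_m]$, with the indefinite term controlled by $\tfrac12\alpha_\infty\int_0^{t_m}\|u_h\|^2\,dt$. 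A discrete Gronwall argument in $m$ then bounds $\max_m\|u_{h,-}^m\|^2$ and all remaining quantities, giving $\sup_m B(u_h,v_h^{(m)})\gtrsim\enorm{u_h}_h^2$. Since $\enorm{v_h^{(m)}}_h\lesssim\enorm{u_h}_h$, choosing for $v_h$ the truncation realizing the maximum (suitably combined with $u_h$) yields \eqref{infsuph} with a constant $c_s$ independent of $h$ and $\Delta t$, but possibly depending on $T$ and $\alpha_\infty$ through the Gronwall factor $e^{\alpha_\infty T}$.

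The main obstacle is precisely the coupling of these last two difficulties: $\max_n\|u_{h,-}^n\|$ is an $L^\infty(0,T)$ quantity that the $L^2(0,T)$-type energy identity does not directly see, and the term $\DivG\bw$ carries no favourable sign. The family of time-truncated test functions together with the discrete Gronwall inequality is the device that resolves both at once; ensuring that the Gronwall constant stays uniform in the mesh — so that $c_s$ does not degenerate as $h,\Delta t\to0$ — is the delicate point, and it is there that the positivity of the DG-in-time jump penalties $\tfrac12\|[u_h]^{n-1}\|^2$ is essential.
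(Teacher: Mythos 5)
Your proposal takes essentially the same route as the paper: the paper's proof (sketched just before the theorem and detailed in the cited reference) likewise tests with restrictions of $u_h$ to $[0,t_n]$ extended by zero, uses the DG-in-time energy identity with telescoping endpoint and jump terms, and invokes the stabilization term together with the Poincar\'e--Friedrichs inequality --- which is precisely where the condition $\sigma \geq \frac{\eps_d}{2}\max_t \frac{c_F(t)}{|\Gamma(t)|}$ enters --- to recover the full $L^2$ norm, with a Gronwall-type argument absorbing the sign-indefinite $\DivG\bw$ contribution. Your energy identity and the identification of the two difficulties (the $L^\infty$-in-time term and the indefinite zeroth-order term) match the paper's argument, so the proposal is correct and not a genuinely different proof.
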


The well-posedness of \eqref{brokenweakformu_h1} in the space of traces  and the stability estimate $\enorm{u_h}_h \le c_s^{-1}  \|f\|_{0}$ for the solution $u_h\in W_h^\Gs$ readily follow from Theorem~\ref{stabadd}.
\smallskip

The following observation, which is standard in the theory of discontinuous Galerkin methods, simplifies the discretization error analysis.
Denote by $\Gs^n$ one time slab of the space--time manifold, $\Gs^n:=\cup_{t \in I_n}\Gamma(t)\times\{t\}$,  introduce the following subspaces of $H$:
\[H_n:=\{\, v \in H\,:\,v=0  \quad \text{on}~~\Gs \setminus \Gs^n\, \},
\]
and define the spaces
\begin{align}
  W_n & = \{ \, v\in H_n\,:\,\dot v \in H_n' \,\}, \quad \|v\|_{W_n}^2 = \|v\|_{H}^2 +\|\dot v\|_{H_n'}^2, \notag \\
W^b  &  := \oplus_{n=1}^N W_n,~~\text{with norm}~~ \|v\|_{W^b}^2= \sum_{n=1}^N \|v\|_{W_n}^2. \label{brokenW}
\end{align}
One can show  that the  bilinear form on the left hand side of \eqref{brokenweakformu_h1} is well defined on $W^b\times W^b$. Moreover, the \emph{unique solution of \eqref{weakformu} is also the unique solution} of the  following variational problem in the \emph{broken} space $W^b$: Find $u \in W^b$ such that
\begin{equation} \label{brokenweakformu}
  \la \dot u ,v\ra_b +a(u,v)+d(u,v) =( f,v )_0 \quad \text{for all}~~v \in W^b.
\end{equation}
For this time-discontinuous weak formulation an inf-sup stability result as in \eqref{infsuph} with $W_h^\Gs$ replaced by $W^b$
can be derived. A simplification of the error analysis comes from the observation  that our space--time TraceFEM (without geometry approximation) can be treated as a \textit{conforming} Galerkin FEM for the variational problem \eqref{brokenweakformu}.

As usual, for the error analysis one needs continuity of the TraceFEM bilinear form and of the adjoint bilinear form.
By standard arguments one shows the following results:
\begin{align} \label{cont}
  |\la \dot e,v\ra_b +a_\sigma(e,v)+d(e,v)|&
\leq c  \enorm{v}_h(\|e\|_{W^b}+\sum_{n=0}^{N-1}\|[e]^n\|_{t^n}),\\
|\la \dot e,v\ra_b +a_\sigma(e,v)+d(e,v)|&
\leq c  \enorm{e}_h(\|v\|_{W^b}+\sum_{n=1}^{N-1}\|[v]^n\|_{t^n}+\|v\|_T), \label{cont_dual}
\end{align}
for any $e,v\in W^b$, with constants $c$ independent of $e, v, h, N$.

\noindent\textbf{Extension of functions defined on  $\Gs$.}
Similar to the case of stationary manifolds,  approximation properties of the trace space $W_h^\Gs$ completely rely on  approximation properties of the outer space $W_{\ell,m}$. To exploit the latter, we need a suitable extension procedure for smooth functions on the space--time manifold $\Gs$ to a neighborhood of $\Gs$.
For a function $u \in H^2(\Gs)$ we need an extension $u^e \in H^2(\mathcal{O}_h(\Gs))$, where $\mathcal{O}_h(\Gs)$ is an $h$-neighborhood in $\Bbb{R}^4$ that contains the space--time manifold $\Gs$. A suitable extension $u^e$ can be constructed by extending $u$ along the \emph{spatial} normal direction to $\Gamma(t)$ for every $t\in [0,T]$.  We assume $\Gs$ to be a three-dimensional $C^3$-manifold in $\Bbb{R}^4$. The following result is proved in~\cite{olshanskii2014error}:
\begin{equation}\label{H2equiv}
\|u^e\|_{H^m(\mathcal{O}_\delta(\Gs))}^2  \leq c \delta\|u\|_{H^m(\Gs)}^2 \quad \text{for all}~~u \in H^m(\Gs),~m=0,1,2.
\end{equation}
with
\begin{equation*} \label{defU}
 \mathcal{O}_\delta(\Gs) = \{\, \bx:=(x,t) \in  \R^{3+1}\,:\,{\rm dist}(x,\Gamma(t)) < \delta\, \}.
\end{equation*}

\noindent\textbf{Interpolation and error bounds.}
Recall that the local space--time triangulation $\mathcal{Q}_h^\Gs$ consists of cylindrical elements that are intersected by $\Gs$. The domain formed by these prisms is denoted by $Q^\Gs$.
For $K \in \mathcal{Q}_h^\Gs$, the nonempty intersections are denoted by $\Gs_K=K \cap \Gs$.
Let
\[
  I_h: C(Q^{\Gs}) \to W_{\ell,m}|_{Q^{\Gs}}
\]
be the nodal interpolation operator.
Since the triangulation may vary from  time-slab to time-slab, the interpolant  is in general discontinuous between the time-slabs.

The key ingredients for proving interpolation bounds are the result in \eqref{H2equiv} with $\delta\sim h$, which allows to control volumetric norms by the  corresponding surface norms, and an elementwise trace inequality, which is the 4D analog of \eqref{inHansbo}. Assuming $\Delta t\sim h$, this trace inequality is as follows:
\begin{equation} \label{resl}
  \|v\|_{L^2(\Gs_K)}^2 \leq c  (h^{-1}\|v\|_{L^2(K)}^2+ h\|v\|_{H^1(K)}^2) \quad \text{for all}~~v \in H^1(K),~  K \in \Q_h^{\Gs},
\end{equation}
with a constant $c$, depending only on the shape regularity of the tetrahedral triangulations $\T_n$ and the smoothness of $\Gs$.
The trace inequality \eqref{resl} is proved in \cite{olshanskii2014error}  with one further technical assumption, which is always satisfied if the mesh sufficiently resolves $\Gs$.

Applying the `extend--interpolate--pull back' argument as in section~\ref{sectapprox} one proves the following approximation bounds for $\ell=k=1$ and $\Delta t\sim h$ and sufficiently smooth $u$ defined on  $\Gs$:
\begin{equation} \label{reskk1}
\begin{aligned}
 \sum_{n=1}^N\|u- I_hu^e\|_{H^k(\Gs^n)}^2 &\leq c h^{2(2-k)} \|u\|_{H^2(\Gs)}^2,  \quad k=0,1,\\
 \|u- (I_hu^e)_{-}\|_{t^n} &\leq c h^{2} \|u\|_{H^2(\Gamma(t^n))},~~ n=1,\dots,N,\\
 \|u- (I_hu^e)_{+}\|_{t^n}&\leq c h^{2} \|u\|_{H^2(\Gamma(t^n))},~~ n=0,\dots,N-1.
\end{aligned}
\end{equation}
The constants $c$ are independent of $u, h, N$. To extend the approximation bounds in \eqref{reskk1} to higher order
space--time finite elements ($\ell>1$, $m>1$) we need an estimate as in \eqref{H2equiv} for higher order Sobolev norms. We expect such estimates to be true, but did not work out the details, yet.

Now the inf-sup inequality \eqref{infsuph}, the Galerkin orthogonality for the TraceFEM \eqref{brokenweakformu_h} (recall that it is a conforming method for the auxiliary broken formulation \eqref{brokenweakformu}), combined with the continuity  and approximation results in \eqref{cont} and \eqref{reskk1} imply the following convergence result.
\begin{theorem} \label{thmmain1}
Let $u$ be the solution of \eqref{weakformu} and assume $u \in  H^2(\Gs)$, $u\in H^2(\Gamma(t))$
for all $t\in[0,T]$.  Let $u_h \in W_h$ be the solution of the discrete problem \eqref{brokenweakformu_h1} with a stabilization parameter $\sigma$ as in Theorem~\ref{stabadd}.  The following error bound holds:
\[
 \enorm{u-u_h}_h \leq c h (\|u\|_{H^2(\Gs)}+\sup_{t\in[0,T]}\|u\|_{H^2(\Gamma(t))}),\quad \Delta t\sim h.
\]
\end{theorem}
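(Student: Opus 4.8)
The plan is to run the standard inf-sup based Céa/Strang argument, exploiting that the space--time TraceFEM \eqref{brokenweakformu_h1} is a \emph{conforming} Galerkin method for the broken variational problem \eqref{brokenweakformu}. First I would record \emph{consistency} of the stabilization: in the mass-conservation setting under consideration the exact solution satisfies $\bar u(t)\equiv 0$, so the added term in \eqref{stabblf} vanishes and $a_\sigma(u,v)=a(u,v)$ against every test function. Subtracting the discrete equation \eqref{brokenweakformu_h1} from \eqref{brokenweakformu} tested with $v_h\in W_h^\Gs\subset W^b$ then yields the Galerkin orthogonality
\begin{equation*}
  \la \dot e,v_h\ra_b + a_\sigma(e,v_h)+d(e,v_h)=0 \quad\text{for all } v_h\in W_h^\Gs,
\end{equation*}
where $e:=u-u_h$. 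Throughout I use $\Delta t\sim h$, as required by \eqref{resl} and \eqref{reskk1}.

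Next I would split the error as $e=\eta+\xi$ with interpolation part $\eta:=u-I_hu^e$ and discrete part $\xi:=I_hu^e-u_h\in W_h^\Gs$, and control $\xi$ through the inf-sup stability \eqref{infsuph}. Choosing a near-optimal test function $v_h$ in \eqref{infsuph} and writing $\xi=e-\eta$, the Galerkin orthogonality kills the $e$-contribution, so that
\begin{equation*}
  c_s\,\enorm{\xi}_h \le \sup_{v_h\in W_h^\Gs}\frac{|\la \dot\eta,v_h\ra_b+a_\sigma(\eta,v_h)+d(\eta,v_h)|}{\enorm{v_h}_h}.
\end{equation*}
The continuity bound \eqref{cont} then estimates the right-hand side by $c\big(\|\eta\|_{W^b}+\sum_{n=0}^{N-1}\|[\eta]^n\|_{t^n}\big)$, and a final triangle inequality $\enorm{e}_h\le\enorm{\eta}_h+\enorm{\xi}_h$ reduces everything to bounding the interpolation error $\eta$ in three quantities: the energy norm $\enorm{\eta}_h$, the broken norm $\|\eta\|_{W^b}$, and the time-face jumps $\|[\eta]^n\|_{t^n}$.

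The energy-norm and jump pieces follow directly from \eqref{reskk1}: the $\|\cdot\|_H$ contribution to $\enorm{\eta}_h$ is $O(h)$ (the gradient term, via the $k=1$ estimate), while the time-trace terms $\|\eta_\pm^n\|_{t^n}$, and hence the jumps $[\eta]^n=(I_hu^e)_-^n-(I_hu^e)_+^n$ (recall $[u]^n=0$ since $u$ is continuous in time), are $O(h^2)$. The remaining ingredient is $\|\eta\|_{W^b}^2=\sum_n\big(\|\eta\|_{H,\Gs^n}^2+\|\dot\eta\|_{H_n'}^2\big)$, and here the only non-routine point is the material-derivative term. I would handle it by noting that within each slab both $u$ and $I_hu^e$ are smooth in time, so $\dot\eta\in L^2(\Gs^n)$; that the space--time flow field $(\bw,1)$ is tangent to $\Gs$, so $\dot\eta$ is a tangential derivative and $\|\dot\eta\|_{L^2(\Gs^n)}\lesssim\|\eta\|_{H^1(\Gs^n)}$; and finally that $L^2(\Gs^n)\hookrightarrow H_n'$ with a uniform constant. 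Summing over $n$ and invoking the $k=1$ bound in \eqref{reskk1} gives $\|\eta\|_{W^b}\lesssim h\,\|u\|_{H^2(\Gs)}$.

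Combining these estimates yields $\enorm{e}_h\lesssim h\big(\|u\|_{H^2(\Gs)}+\sup_{t\in[0,T]}\|u\|_{H^2(\Gamma(t))}\big)$, which is the claimed bound. I expect the main obstacle to be precisely this dual-norm material-derivative estimate for $\|\eta\|_{W^b}$, since \eqref{reskk1} is phrased in intrinsic $H^k(\Gs^n)$ norms and one must argue that these indeed control the convective derivative $\dot\eta$; the careful bookkeeping of the time-slab jump terms, using continuity of $u$ across time levels to rewrite $[\eta]^n$, is the other place where attention is needed.
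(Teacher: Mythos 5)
Your proposal is correct and follows essentially the same route as the paper: the paper's (one-sentence) proof is precisely the combination of the inf-sup stability \eqref{infsuph}, Galerkin orthogonality from conformity with respect to the broken formulation \eqref{brokenweakformu}, the continuity bound \eqref{cont}, and the interpolation estimates \eqref{reskk1}, which is exactly what you execute via the standard splitting $e=\eta+\xi$. Your two added observations --- that consistency of the stabilization term (via $\bar u(t)\equiv 0$ in the mass-conservation setting) is what makes the orthogonality hold for $a_\sigma$, and that the $W^b$-bound on $\dot\eta$ follows from tangency of $(\bw,1)$ to $\Gs$ plus $L^2(\Gs^n)\hookrightarrow H_n'$ --- are the right way to fill the details the paper leaves implicit.
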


The error estimate in Theorem~\ref{thmmain1} assumes that all integrals in \eqref{brokenweakformu_h} over the space--time manifold are computed exactly. This assumption has been made in \cite{olshanskii2014error} to simplify the analysis, but it obviously is not a realistic assumption. In practice an approximation of $\Gs$ is used, as discussed in section~\ref{sectSTTraceFEM}. Taking this surface approximation into account in the analysis, would naturally involve estimates of a consistency term as in Strang's lemma in section~\ref{sectStrang}.
We expect that with similar tools as used for the case of a stationary surface, suitable estimates can be derived. Such results, however, are not available, yet.

Denote also by $\|\cdot\|_{-1}$ the norm dual to the $H^1_0(\Gs)$ norm with respect to the $L^2$-duality.
The  next theorem  gives an $O(h^2)$-convergence estimate for the linear space--time TraceFEM.
\begin{theorem} \label{dualthm}  Assume that $\Gs$ is sufficiently smooth and that the assumptions of Theorem~\ref{thmmain1} are satisfied.
Then the following  error estimate holds:
\[
 \|u-u_h\|_{-1} \leq c h^2 (\|u\|_{H^2(\Gs)}+\sup_{t\in[0,T]}\|u\|_{H^2(\Gamma(t))}).
\]
\end{theorem}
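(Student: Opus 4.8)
The plan is to run a duality (Aubin--Nitsche) argument for the space--time bilinear form
\[
 B(u,v):= \la \dot u, v\ra_b + a_\sigma(u,v) + d(u,v),
\]
using that the method is a \emph{conforming} Galerkin discretization of the broken formulation \eqref{brokenweakformu}. Since the stabilization is consistent (the exact solution has $\bar u\equiv 0$, so $a_\sigma(u,\cdot)=a(u,\cdot)$), the Galerkin orthogonality $B(u-u_h,v_h)=0$ holds for all $v_h\in W_h^\Gs$. First I would fix an arbitrary $\phi\in H^1_0(\Gs)$ and introduce the \emph{adjoint} problem: find $z\in W^b$, subject to the natural final-time condition $z(\cdot,T)=0$, such that
\begin{equation*}
 B(v,z)=(v,\phi)_0\qquad\text{for all }v\in W^b.
\end{equation*}
This is a backward parabolic problem on $\Gs$, whose unique solvability follows from the adjoint of the broken inf-sup estimate (the $W^b$-analogue of \eqref{infsuph}), equivalently from part c) of Lemma~\ref{la:infsup} at the continuous level.

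The decisive ingredient is a regularity bound for the dual solution of the form
\[
 \|z\|_{H^2(\Gs)}+\sup_{t\in[0,T]}\|z\|_{H^2(\Gamma(t))}\lesssim \|\phi\|_{L^2(\Gs)},
\]
which is where the smoothness assumption on $\Gs$ is used. Granting this, I set $e:=u-u_h\in W^b$ and test the adjoint problem with $v=e$, which gives $(e,\phi)_0=B(e,z)$. Subtracting $z_h:=I_h z^e\in W_h^\Gs$ by Galerkin orthogonality and applying the adjoint continuity estimate \eqref{cont_dual}, I obtain
\[
 |(e,\phi)_0|=|B(e,z-I_h z^e)|\le c\,\enorm{e}_h\Big(\|z-I_h z^e\|_{W^b}+\sum_{n=1}^{N-1}\|[z-I_h z^e]^n\|_{t^n}+\|z-I_h z^e\|_T\Big).
\]
For $\ell=k=1$ and $\Delta t\sim h$, the approximation results \eqref{reskk1} bound each interpolation term by $c\,h\,(\|z\|_{H^2(\Gs)}+\sup_t\|z\|_{H^2(\Gamma(t))})$; the only contribution not literally covered there is the dual material-derivative part hidden in $\|\cdot\|_{W^b}$, which I would estimate directly from the definition of the distributional derivative $\la\dot w,\cdot\ra$ together with the $L^2$- and $H^1$-interpolation bounds, again producing an $O(h)$ factor.

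Inserting the energy error bound from Theorem~\ref{thmmain1}, namely $\enorm{e}_h\lesssim h(\|u\|_{H^2(\Gs)}+\sup_t\|u\|_{H^2(\Gamma(t))})$, together with the dual regularity estimate, yields
\[
 |(e,\phi)_0|\lesssim h^2\,(\|u\|_{H^2(\Gs)}+\sup_t\|u\|_{H^2(\Gamma(t))})\,\|\phi\|_{L^2(\Gs)}.
\]
Because $\phi\in H^1_0(\Gs)$ on the compact space--time manifold, the Poincar\'e inequality gives $\|\phi\|_{L^2(\Gs)}\lesssim\|\phi\|_{H^1_0(\Gs)}$; dividing by $\|\phi\|_{H^1_0(\Gs)}$ and taking the supremum over $0\neq\phi\in H^1_0(\Gs)$ in the definition of $\|\cdot\|_{-1}$ finishes the estimate. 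I expect the main obstacle to be the dual $H^2$-regularity: one must establish well-posedness and parabolic smoothing for the backward problem on the \emph{evolving} surface $\Gs$ uniformly in $t$, produce a normal extension $z^e$ to which \eqref{reskk1} and \eqref{H2equiv} apply, and---most delicately---reconcile the naturally \emph{anisotropic} (space--time) parabolic regularity of $z$ with the isotropic $H^2(\Gs)$ norm appearing in the interpolation bounds. A secondary technical point is interpolating $z$ in the full $W^b$-norm, i.e.\ controlling the $H_n'$-dual norm of the material derivative of the interpolation error.
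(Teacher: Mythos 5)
Your overall architecture is the same as the paper's: Aubin--Nitsche duality for the broken formulation \eqref{brokenweakformu}, Galerkin orthogonality of the conforming space--time TraceFEM, the adjoint continuity bound \eqref{cont_dual}, the interpolation estimates \eqref{reskk1} applied to the dual solution, and the energy bound of Theorem~\ref{thmmain1}. However, your ``decisive ingredient'' --- the dual regularity estimate
\[
\|z\|_{H^2(\Gs)}+\sup_{t\in[0,T]}\|z\|_{H^2(\Gamma(t))}\lesssim \|\phi\|_{L^2(\Gs)}
\]
--- is exactly the estimate that is \emph{not} available, and this is not a technicality but the crux of the theorem. The norm $\|z\|_{H^2(\Gs)}$ is the \emph{isotropic} space--time $H^2$ norm on the four-dimensional manifold $\Gs$; it controls second derivatives in the time-like direction. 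A (backward) parabolic problem with $L^2$ data yields only $\dot z\in L^2$ and two spatial derivatives in $L^2$ --- it does not give the full isotropic $H^2(\Gs)$ regularity, because the solution of a parabolic problem is generically less regular in time than in space. To get isotropic $H^2(\Gs)$ control of $z$ one must take the dual data one order smoother, $\phi\in H^1_0(\Gs)$, and the regularity result proved in the cited reference bounds $\|z\|_{H^2(\Gs)}$ by $\|\phi\|_{H^1_0(\Gs)}$, not by $\|\phi\|_{L^2(\Gs)}$. This is precisely why the theorem is stated in the norm dual to $H^1_0(\Gs)$ instead of in $L^2$: the paper remarks explicitly that the isotropic interpolation bounds \eqref{reskk1} ``call for isotropic space--time $H^2$-regularity bounds,'' that such regularity is too restrictive for parabolic problems, and that this obstacle is overcome ``by measuring the error in the weaker $\|\cdot\|_{-1}$-norm.''

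Your final step is the symptom that reveals the gap: if $|(e,\phi)_0|\lesssim h^2\,C\,\|\phi\|_{L^2(\Gs)}$ held for all $\phi\in H^1_0(\Gs)$ (which is dense in $L^2(\Gs)$), you would have proved the full $L^2(\Gs)$ error estimate, and your Poincar\'e step would merely discard that stronger conclusion --- but the paper states this $L^2$ result is out of reach for this technique. You did identify the right obstacle yourself (``reconcile the naturally anisotropic parabolic regularity of $z$ with the isotropic $H^2(\Gs)$ norm''), but you did not draw its consequence: the obstacle cannot be resolved for $L^2$ data; it \emph{dictates} taking $\phi\in H^1_0(\Gs)$ in the duality pairing, with the dual regularity bound $\|z\|_{H^2(\Gs)}+\sup_t\|z\|_{H^2(\Gamma(t))}\lesssim\|\phi\|_{H^1_0(\Gs)}$. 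With that corrected ingredient the rest of your argument goes through verbatim and delivers $|(e,\phi)_0|\lesssim h^2\,C\,\|\phi\|_{H^1_0(\Gs)}$ directly; dividing and taking the supremum gives the $\|\cdot\|_{-1}$ bound with no Poincar\'e step at all.
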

The proof uses the Aubin-Nitsche duality argument and invokes the Galerkin orthogonality, the  continuity result in~\eqref{cont_dual} and the error estimate from Theorem~\ref{thmmain1}.  As is usual in the Aubin-Nitsche duality argument, one needs a regularity result for the  problem dual to \eqref{weakformu}. The required regularity result is proved in \cite{olshanskii2014error}.

Note that $O(h^2)$ convergence was derived in a norm weaker than the commonly considered
$L^2$ norm. The reason  is that the proof uses isotropic  polynomial interpolation error
bounds on 4D space--time elements, see \eqref{reskk1}. Naturally, such bounds call for isotropic space--time $H^2$-regularity bounds for the  solution. For parabolic problems, however, such regularity is more restrictive than in an elliptic case, since the solution is generally less regular in time than in space. We were able to overcome this by measuring the error in the weaker $\|\cdot\|_{-1}$-norm.
\begin{remark}[Numerical experiments] \rm Results of numerical experiments with the linear space--time TraceFEM (i.e., bilinear space--time finite elements and bilinear interpolation of the level set function for the space--time surface approximation) are given in \cite{refJoerg,GOReccomas,ORXsinum}. These results confirm the optimal first order $H^1$ error bound given in Theorem~\ref{thmmain1} and also show optimal second order convergence in the  $L^\infty(L^2(\Gamma(t)))$ norm. The theory on well-posedness of the continuous problem and on the discretization error analysis is applicable only to problems with a smooth space--time surface, i.e, topological changes in $\Gamma(t)$ are not allowed. Surfaces with topological changes can be handled very easily with a level set technique and also the space--time TraceFEM can be directly applied to such a problem. In \cite{GOReccomas} results of the space--time TraceFEM applied to a problem with a topological change (``collision of two spheres'') are presented. These
results illustrate that this method is very robust and yields stable results even for large mesh size ($h$ and $\Delta t$) and in cases with topological singularities.
\end{remark}
%
%
\section{Variants of TraceFEM on evolving surfaces} \label{sectVariants}
Several other possibilities to extend the TraceFEM  to evolving surfaces are known in the literature.
A combination of the TraceFEM and the narrow-band FEM was suggested in \cite{deckelnick2014unfitted}, a  characteristic-Galerkin
TraceFEM was studied in \cite{hansbo2015characteristic} and a hybrid, FD in time -- TraceFEM in space, variant was recently
proposed in \cite{olshanskii2016trace}. Here we review these methods and available analysis. Throughout this section,  $u^{n}$ denotes an approximation to $u(t_{n})$ for the time nodes $0=t_0<\dots<t_N=T$.

The trace--narrow-band FEM by  Deckelnick et al. \cite{deckelnick2014unfitted} is based on the level set description of the surface
evolution. In this method, one assumes an approximation to $\Gamma(t)$ at each time node $t_n$ given by
$
\Gamma^n_h=\{\bx\in\mathbb{R}^3\,:\,\phi_h(t_n,x)=0\}
$
and defines the $h$-width narrow strip around $\Gamma^n_h$,
\[
\mathcal{O}_h(\Gamma^n_h)=\{\bx\in\mathbb{R}^3\,:\,|\phi_h(t_n,x)|<h\}.
\]
The finite element level set function $\phi_h$ is assumed sufficiently regular and has to satisfy $|\nabla\phi_h(t_n,x)|\ge c>0$  in a neighborhood of $\Gamma^n_h$.
The trace--narrow-band FEM benefits from the observation that for a test function $\eta$ constant along material pathes, i.e. $\dot{\eta}=0$,
the transport--diffusion equation \eqref{transport} yields the integral identity
\begin{equation}
\frac{d}{dt}\int_{\Gamma(t)}{u}\eta\,ds  +{ \eps_d}\int_{\Gamma(t)}\nabla_{\Gamma} u \cdot \nabla_{\Gamma} \eta\,ds= \int_{\Gamma(t)}f\eta\,ds\qquad t\in (0,T].
\label{deriv}
\end{equation}
One can extend any given time independent $\psi\,: \,\mathcal{O}_h(\Gamma^n_h)\to\mathbb{R}$  along characteristics backward in time
in such a way that the extended function $\eta$ satisfies $\dot{\eta}=0$, $\eta|_{t=t_n}=\psi$.
This motivates the approximation of the time derivative of the surface integral on the left-hand side of \eqref{deriv} by the difference
\[
\frac{d}{dt}\int_{\Gamma(t)}{u}\eta\,ds\approx\frac{1}{\Delta t}\left(\int_{\Gamma(t_{n})}{u}\psi\,ds-\int_{\Gamma(t_{n-1})}{u}\psi(\cdot+\bw^e\Delta t)\,ds\right),\quad \Delta t =t_{n}-t_{n-1}.
\]
To make use of this approximation in the finite element setting one has to define a FE  test function in a neighborhood of $\Gamma(t_n)$.
Trace finite element background functions do not suffice, since $x+\bw^e\Delta t$ may lie out of the strip of tetrahedra intersected by $\Gamma(t_{n})$. This forces one to consider  background FE functions which have nonempty intersection of their support with the narrow  strip $\mathcal{O}_h(\Gamma^{n}_h)$ rather than with $\Gamma^{n}_h$. This leads to the following  finite element formulation: Find $u_h^{n}\in V_h^{\rm bulk}$ satisfying
\begin{multline*}
\frac{1}{\Delta t}\left(\int_{\Gamma^{n}_h}{u}_h^{n}\psi_h\,ds-\int_{\Gamma^{n-1}_h}{u}_h^{n-1}\psi_h(\cdot+\bw^e\Delta t )\,ds\right)\\ +{ \eps_d}\int_{\mathcal{O}_h(\Gamma^{n}_h)}\nabla u^{n}_h \cdot \nabla \psi_h|\, \mbox{det}(\nabla\phi_h(t_n,x))|\,ds= \int_{\Gamma^{n}_h}f_h^n\psi_h\,ds
\end{multline*}
for all $\psi_h\in V_h^{\rm bulk}$. It can be shown~\cite{deckelnick2014unfitted}, that the diffusion term is $O(h^2)$-consistent.
One can also show that the  method is conservative so that it preserves mass in the case of an advec\-tion-diffusion conservation law. The condition $x+\bw^e\Delta t\in \mathcal{O}_h(\Gamma^{n}_h)$, $x\in \Gamma^{n}_h$ implies a Courant type restriction on $\Delta t$. Numerical experiments indicate an $O(\Delta t +h^2)$ accuracy of the method for cases with a smoothly deforming surface, but no rigorous error analysis of the method is available so far.
\smallskip

In an Eulerian description of surface evolution, one typically has no explicit access to trajectories of material points on the surface. However, one may try to reconstruct these numerically based on   the velocity field $\bw$ or its approximation in $\Omega$.
In particular, to approximate $\dot{u}(x)$ at $x\in\Gamma_h^n$ one can use a semi-Lagrangian method to integrate numerically
back in time along the characteristic passing through $x$. Doing this for a time interval $[t_{n-1},t_{n}]$ one finds a point $y$ in a neighborhood of $\Gamma_h^{n-1}$. Due to discretization errors $y\notin\Gamma_h^{n-1}$, in general. Hence, one uses
the closest point projection on $\Gamma_h^{n-1}$ to define the relevant data at $y$. This approach to approximate the material
derivative in \eqref{transport} combined with a $P_1$ TraceFEM to handle the diffusion terms has been studied in     \cite{hansbo2015characteristic}. It is proved that for  $\Delta t \sim h$ this method has first order convergence in the $L^2$ norm. Due to the well-known stability properties of semi-Lagrangian methods, the characteristic-TraceFEM does not need additional stabilization
for problems with dominating transport.
\smallskip

Yet another variant of the TraceFEM for evolving surfaces was recently proposed in \cite{olshanskii2016trace}.
The main  motivation for the method presented in that paper was  to avoid space--time elements or any reconstruction of the space--time manifold. To outline the main idea, assume  that the surface is defined implicitly as the zero level of a smooth
level set function $\phi$ on $\Omega\times(0,T)$:
$
\Gamma(t)=\{\bx\in\mathbb{R}^3\,:\,\phi(t,\bx)=0\},
$
such that $|\nabla\phi|\ge c_0>0$ in  a suitable neighborhood of $\Gs$. One can consider  $u^e$ such that $u^e=u$ on $\Gs$ and $\nabla u^e\cdot\nabla \phi =0$ in the neighborhood of $\Gs$.
Note that $u^e$ is smooth once $\phi$  and $u$ are both smooth. With same notation
$u$  for the solution of the surface PDE  \eqref{transport} and its extension, one obtains the following equivalent formulation of \eqref{transport},
\begin{equation}
\left\{\begin{split}
 \frac{\partial u}{\partial t} + \bw \cdot \nabla u + ({\Div}_\Gamma\bw)u -{ \eps_d}\Delta_{\Gamma} u&=f\qquad\text{on}~~\Gamma(t), \\
 \nabla u\cdot\nabla \phi& =0 \qquad\text{in}~\mathcal{O}(\Gamma(t)).
 \end{split}
 \right.~~t\in (0,T].
\label{transport_new}
\end{equation}
Here $\mathcal{O}(\Gamma(t))$ is a $\mathbb{R}^3$ neighborhood of $\Gamma(t)$ for any fixed $t\in(0,T]$.
Assuming $\Gamma(t_n)$ lies in the neighborhood of $\Gamma(t_{n-1})$, where $u^e(t_{n-1})$ is defined,
one may discretize \eqref{transport_new} in time using, for example, the implicit Euler method:
\begin{equation}
\frac{u^{n}-u^e(t_{n-1})}{\Delta t} + \bw^{n} \cdot \nabla u^{n} + ({\Div}_\Gamma\bw^{n})u^{n} -{ \eps_d}\Delta_{\Gamma} u^{n}=f^n\quad\text{on}~\Gamma(t_n),
\label{transportFD}
\end{equation}
$\Delta t=t_n-t_{n-1}$.
Now one applies the TraceFEM to discretize \eqref{transportFD} in space: Find $u^{n}_h\in V_h^\Gamma$ satisfying
\begin{equation} \begin{split}  & \int_{\Gamma_h^{n}}\left(\frac{1}{\Delta t}u^{n}_hv_h - (\bw^{n}_h \cdot \nabla v_h) u^{n}_h \right)\,\rd s_h +{ \eps_d}\int_{\Gamma_h^{n}}\nabla u^{n}_h \cdot \nabla v_h\,\rd s_h  \\ & =\int_{\Gamma_h^{n}}\left(\frac{1}{\Delta t}u^{e,n-1}_h+f^n\right)v_h\, \rd s_h
\label{transportFDFE}
\end{split} \end{equation}
for all $v_h\in V_h^\Gamma$. Here $u^{e,n-1}_h$ is a suitable extension of $u^{n-1}_h$ from $\Gamma_h^{n-1}$ to the surface neighborhood, $\mathcal{O}(\Gamma_h^{n-1})$, such that
$\Gamma^{n}_h\subset \mathcal{O}(\Gamma^{n-1}_h).
$
This is not a Courant condition on $\Delta t$, but rather a condition on a width of a strip surrounding
the surface, where the extension of the finite element solution is performed.
A numerical extension procedure, $u^{k}_h\to u^{e,k}_h$,  and the identity \eqref{transportFDFE} define the fully discrete numerical method.
To find a suitable extension, one can consider a numerical solver for hyperbolic systems and apply it to the second equation in \eqref{transport_new}.  Numerical results from \cite{olshanskii2016trace} suggest that the Fast Marching Method \cite{sethian1996fast} is suitable for building suitable extensions in narrow bands of tetrahedra containing $\Gamma_h$, but other (higher order) numerical  methods can be also used.

A potential advantage of the hybrid  TraceFEM is that the TraceFEM for a PDE on a \textit{steady} surface and a hyperbolic solver, e.g., FMM, are used in a modular way. This  makes the implementation straightforward in a standard  finite element software. This variant also  decouples the application of a spatial TraceFEM from the numerical integration in time. The accuracy of the latter can be increased using standard finite differences, while to increase the  accuracy in space one can
consider isoparametric TraceFEM from section~\ref{sectGammaapprox}.
In a series of numerical  experiments using the BDF2 scheme in time and trace $P_1$  finite elements for spatial discretization,  the method demonstrated a second order convergence in space--time and the ability  to handle a surface with topological changes. Stability and convergence analysis of the method is currently an open problem.

\ \\

\noindent
{\bf Acknowledgements.} The authors acknowledge the contributions of A. Cher\-ny\-shen\-ko, A. Demlow, J. Grande, S. Gross, C. Lehrenfeld, and X. Xu to the research topics treated in this article.
\bibliographystyle{abbrv}
\bibliography{literatur}

\begin{thebibliography}{10}

\bibitem{abedian2013performance}
A.~Abedian, J.~Parvizian, A.~Duester, H.~Khademyzadeh, and E.~Rank.
\newblock Performance of different integration schemes in facing
  discontinuities in the finite cell method.
\newblock {\em International Journal of Computational Methods}, 10(03):1350002,
  2013.

\bibitem{AS03}
D.~Adalsteinsson and J.~A. Sethian.
\newblock Transport and diffusion of material quantities on propagating
  interfaces via level set methods.
\newblock {\em J. Comput. Phys.}, 185:271--288, 2003.

\bibitem{bertalmio2001variational}
M.~Bertalm{\i}o, L.-T. Cheng, S.~Osher, and G.~Sapiro.
\newblock Variational problems and partial differential equations on implicit
  surfaces.
\newblock {\em Journal of Computational Physics}, 174(2):759--780, 2001.

\bibitem{Bonito}
A.~Bonito, R.~Nochetto, and M.~Pauletti.
\newblock Dynamics of biomembranes: effect of the bulk fluid.
\newblock {\em Math. Model. Nat. Phenom.}, 6:25--43, 2011.

\bibitem{Braess}
D.~Braess.
\newblock {\em Finite Elements: Theory, Fast Solvers, and Applications in Solid
  Mechanics, 3d edition}.
\newblock Cambridge University Press, 2007.

\bibitem{burger2009finite}
M.~Burger.
\newblock Finite element approximation of elliptic partial differential
  equations on implicit surfaces.
\newblock {\em Computing and Visualization in Science}, 12(3):87--100, 2009.

\bibitem{Burman2010}
E.~Burman.
\newblock Ghost penalty.
\newblock {\em Comptes Rendus Mathematique}, 348:1217--1220, 2010.

\bibitem{burman2015cutfem}
E.~Burman, S.~Claus, P.~Hansbo, M.~G. Larson, and A.~Massing.
\newblock Cutfem: Discretizing geometry and partial differential equations.
\newblock {\em International Journal for Numerical Methods in Engineering},
  104(7):472--501, 2015.

\bibitem{Alg1}
E.~Burman, P.~Hansbo, and M.~G. Larson.
\newblock A stabilized cut finite element method for partial differential
  equations on surfaces: The {L}aplace--{B}eltrami operator.
\newblock {\em Computer Methods in Applied Mechanics and Engineering},
  285:188--207, 2015.

\bibitem{burman2016cutb}
E.~Burman, P.~Hansbo, M.~G. Larson, and A.~Massing.
\newblock A cut discontinuous {Galerkin} method for the {Laplace--Beltrami}
  operator.
\newblock {\em IMA Journal of Numerical Analysis}, page drv068, 2016.

\bibitem{burman2016cutc}
E.~Burman, P.~Hansbo, M.~G. Larson, and A.~Massing.
\newblock Cut finite element methods for partial differential equations on
  embedded manifolds of arbitrary codimensions.
\newblock {\em arXiv preprint arXiv:1610.01660}, 2016.

\bibitem{burman2016full}
E.~Burman, P.~Hansbo, M.~G. Larson, A.~Massing, and S.~Zahedi.
\newblock Full gradient stabilized cut finite element methods for surface
  partial differential equations.
\newblock {\em Computer Methods in Applied Mechanics and Engineering},
  310:278--296, 2016.

\bibitem{2015arXiv151102340B}
E.~{Burman}, P.~{Hansbo}, M.~G. {Larson}, and S.~{Zahedi}.
\newblock {Stabilized CutFEM for the Convection Problem on Surfaces}.
\newblock {\em ArXiv e-prints}, Nov. 2015.

\bibitem{burman2016cut}
E.~Burman, P.~Hansbo, M.~G. Larson, and S.~Zahedi.
\newblock Cut finite element methods for coupled bulk--surface problems.
\newblock {\em Numerische Mathematik}, 133(2):203--231, 2016.

\bibitem{Chen2014}
K.-Y. Chen and M.-C. Lai.
\newblock A conservative scheme for solving coupled surface--bulk
  convection--diffusion equations with an application to interfacial flows with
  soluble surfactant.
\newblock {\em J. Comp. Phys.}, 257:1--18, 2014.

\bibitem{ChernOlsh1}
A.~Y. Chernyshenko and M.~A. Olshanskii.
\newblock Non-degenerate {Eulerian} finite element method for solving {PDEs} on
  surfaces.
\newblock {\em Russian Journal of Numerical Analysis and Mathematical
  Modelling}, 28(2):101--124, 2013.

\bibitem{chernyshenko2015adaptive}
A.~Y. Chernyshenko and M.~A. Olshanskii.
\newblock An adaptive octree finite element method for {PDEs} posed on
  surfaces.
\newblock {\em Computer Methods in Applied Mechanics and Engineering},
  291:146--172, 2015.

\bibitem{Belytschko03}
J.~Chessa and T.~Belytschko.
\newblock An extended finite element method for two-phase fluids.
\newblock {\em ASME Journal of Applied Mechanics}, 70:10--17, 2003.

\bibitem{Clift}
R.~Clift, J.~Grace, and M.~Weber.
\newblock {\em Bubbles, Drops and Particles}.
\newblock Dover, Mineola, 2005.

\bibitem{deckelnick2014unfitted}
K.~Deckelnick, C.~M. Elliott, and T.~Ranner.
\newblock Unfitted finite element methods using bulk meshes for surface partial
  differential equations.
\newblock {\em SIAM Journal on Numerical Analysis}, 52(4):2137--2162, 2014.

\bibitem{Demlow09}
A.~Demlow.
\newblock Higher-order finite element methods and pointwise error estimates for
  elliptic problems on surfaces.
\newblock {\em SIAM J. Numer. Anal.}, 47:805--827, 2009.

\bibitem{Demlow06}
A.~Demlow and G.~Dziuk.
\newblock An adaptive finite element method for the {Laplace-Beltrami} operator
  on implicitly defined surfaces.
\newblock {\em SIAM J. Numer. Anal.}, 45:421--442, 2007.

\bibitem{DemlowOlshanskii12}
A.~Demlow and M.~A. Olshanskii.
\newblock An adaptive surface finite element method based on volume meshes.
\newblock {\em SIAM J. Numer. Anal.}, 50:1624--1647, 2012.

\bibitem{dreau2010studied}
K.~Dr{\'e}au, N.~Chevaugeon, and N.~Mo{\"e}s.
\newblock Studied {X-FEM} enrichment to handle material interfaces with higher
  order finite element.
\newblock {\em Computer Methods in Applied Mechanics and Engineering},
  199(29):1922--1936, 2010.

\bibitem{Dziuk88}
G.~Dziuk.
\newblock Finite elements for the {Beltrami} operator on arbitrary surfaces.
\newblock In S.~Hildebrandt and R.~Leis, editors, {\em Partial differential
  equations and calculus of variations}, volume 1357 of {\em Lecture Notes in
  Mathematics}, pages 142--155. Springer, 1988.

\bibitem{Dziuk07}
G.~Dziuk and C.~Elliott.
\newblock Finite elements on evolving surfaces.
\newblock {\em IMA J. Numer. Anal.}, 27:262--292, 2007.

\bibitem{DziukElliot2010}
G.~Dziuk and C.~Elliott.
\newblock An {Eulerian} approach to transport and diffusion on evolving
  implicit surfaces.
\newblock {\em Comput. Vis. Sci.}, 13:17–--28, 2010.

\bibitem{DziukElliottAN}
G.~Dziuk and C.~M. Elliott.
\newblock Finite element methods for surface {PDEs}.
\newblock {\em Acta Numerica}, 22:289--396, 2013.

\bibitem{DziukElliot2013a}
G.~Dziuk and C.~M. Elliott.
\newblock $l^2$-estimates for the evolving surface finite element method.
\newblock {\em Mathematics of Computation}, 82:1--24, 2013.

\bibitem{Eggleton}
C.~Eggleton and K.~Stebe.
\newblock An adsorption-desorption-controlled surfactant on a deforming
  droplet.
\newblock {\em Journal of colloid and interface science}, 208:68--80, 1998.

\bibitem{eigel2017posteriori}
M.~Eigel and R.~M{\"u}ller.
\newblock A posteriori error control for stationary coupled bulk-surface
  equations.
\newblock {\em IMA Journal of Numerical Analysis}, page drw080, 2017.

\bibitem{ElliottRanner}
C.~Elliott and T.~Ranner.
\newblock Finite element analysis for a coupled bulk--surface partial
  differential equation.
\newblock {\em IMA J. Numer. Anal.}, 33:377--402, 2013.

\bibitem{elliott2015error}
C.~M. Elliott and C.~Venkataraman.
\newblock Error analysis for an {ALE} evolving surface finite element method.
\newblock {\em Numerical Methods for Partial Differential Equations},
  31(2):459--499, 2015.

\bibitem{Fries}
T.~Fries and T.~Belytschko.
\newblock {The generalized/extended finite element method: An overview of the
  method and its applications}.
\newblock {\em Int. J. Num. Meth. Eng.}, 84:253--304, 2010.

\bibitem{fries2015higher}
T.-P. Fries and S.~Omerovi{\'c}.
\newblock Higher-order accurate integration of implicit geometries.
\newblock {\em International Journal for Numerical Methods in Engineering},
  2015.

\bibitem{refJoerg}
J.~Grande.
\newblock Eulerian finite element methods for parabolic equations on moving
  surfaces.
\newblock {\em SIAM Journal on Scientific Computing}, 36(2):B248--B271, 2014.

\bibitem{GLR}
J.~Grande, C.~Lehrenfeld, and A.~Reusken.
\newblock Analysis of a high order trace finite element method for {PDEs} on
  level set surfaces.
\newblock {\em arXiv preprint arXiv:1611.01100}, 2016.

\bibitem{GOReccomas}
J.~Grande, M.~A. Olshanskii, and A.~Reusken.
\newblock A space-time {FEM} for {PDEs} on evolving surfaces.
\newblock In E.~Onate, J.~Oliver, and A.~Huerta, editors, {\em proceedings of
  11th World Congress on Computational Mechanics}. Eccomas. IGPM report 386
  RWTH Aachen, 2014.

\bibitem{Grande2014}
J.~Grande and A.~Reusken.
\newblock A higher order finite element method for partial differential
  equations on surfaces.
\newblock Preprint 401, IGPM, RWTH Aachen University, 2014.
\newblock Submitted.

\bibitem{Greer}
J.~B. Greer.
\newblock An improvement of a recent {E}ulerian method for solving {PDEs} on
  general geometries.
\newblock {\em J. Sci. Comput.}, 29:321--352, 2008.

\bibitem{GrossOlshanskiiReusken2015}
S.~Gross, M.~A. Olshanskii, and A.~Reusken.
\newblock A trace finite element method for a class of coupled bulk--interface
  transport problems.
\newblock {\em ESAIM: Mathematical Modelling and Numerical Analysis},
  49:1303--1330, 2015.

\bibitem{GReusken2011}
S.~{Gro\ss} and A.~Reusken.
\newblock {\em Numerical Methods for Two-phase Incompressible Flows}.
\newblock Springer, Berlin, 2011.

\bibitem{Hansbo02}
A.~Hansbo and P.~Hansbo.
\newblock An unfitted finite element method, based on {Nitsche's} method, for
  elliptic interface problems.
\newblock {\em Comput. Methods Appl. Mech. Engrg.}, 191:5537--5552, 2002.

\bibitem{Hansbo04}
A.~Hansbo and P.~Hansbo.
\newblock A finite element method for the simulation of strong and weak
  discontinuities in solid mechanics.
\newblock {\em Comput. Methods Appl. Mech. Engrg.}, 193:3523--3540, 2004.

\bibitem{hansbo2015characteristic}
P.~Hansbo, M.~G. Larson, and S.~Zahedi.
\newblock Characteristic cut finite element methods for convection--diffusion
  problems on time dependent surfaces.
\newblock {\em Computer Methods in Applied Mechanics and Engineering},
  293:431--461, 2015.

\bibitem{hansbo2016cut_td}
P.~Hansbo, M.~G. Larson, and S.~Zahedi.
\newblock A cut finite element method for coupled bulk-surface problems on
  time-dependent domains.
\newblock {\em Computer Methods in Applied Mechanics and Engineering},
  307:96--116, 2016.

\bibitem{holdych2008quadrature}
D.~J. Holdych, D.~R. Noble, and R.~B. Secor.
\newblock Quadrature rules for triangular and tetrahedral elements with
  generalized functions.
\newblock {\em International Journal for Numerical Methods in Engineering},
  73(9):1310--1327, 2008.

\bibitem{joulaian2016numerical}
M.~Joulaian, S.~Hubrich, and A.~D{\"u}ster.
\newblock Numerical integration of discontinuities on arbitrary domains based
  on moment fitting.
\newblock {\em Computational Mechanics}, 57(6):979--999, 2016.

\bibitem{kovacs2016high}
B.~Kov{\'a}cs.
\newblock High-order evolving surface finite element method for parabolic
  problems on evolving surfaces.
\newblock {\em arXiv preprint arXiv:1606.07234}, 2016.

\bibitem{lehrenfeld2015cmame}
C.~Lehrenfeld.
\newblock High order unfitted finite element methods on level set domains using
  isoparametric mappings.
\newblock {\em Comp. Meth. Appl. Mech. Eng.}, 300(1):716--733, 2016.

\bibitem{Lenoir86}
M.~Lenoir.
\newblock Optimal isoparametric finite elements and error estimates for domains
  involving curved boundaries.
\newblock {\em SIAM J. Numer. Anal.}, 23:562--580, 1986.

\bibitem{macdonald2009implicit}
C.~B. Macdonald and S.~J. Ruuth.
\newblock The implicit closest point method for the numerical solution of
  partial differential equations on surfaces.
\newblock {\em SIAM Journal on Scientific Computing}, 31(6):4330--4350, 2009.

\bibitem{min2007geometric}
C.~Min and F.~Gibou.
\newblock Geometric integration over irregular domains with application to
  level-set methods.
\newblock {\em Journal of Computational Physics}, 226(2):1432--1443, 2007.

\bibitem{moumnassi2011finite}
M.~Moumnassi, S.~Belouettar, {\'E}.~B{\'e}chet, S.~P. Bordas, D.~Quoirin, and
  M.~Potier-Ferry.
\newblock Finite element analysis on implicitly defined domains: An accurate
  representation based on arbitrary parametric surfaces.
\newblock {\em Computer Methods in Applied Mechanics and Engineering},
  200(5):774--796, 2011.

\bibitem{muller2013highly}
B.~M{\"u}ller, F.~Kummer, and M.~Oberlack.
\newblock Highly accurate surface and volume integration on implicit domains by
  means of moment-fitting.
\newblock {\em International Journal for Numerical Methods in Engineering},
  96(8):512--528, 2013.

\bibitem{OR08}
M.~A. Olshanskii and A.~Reusken.
\newblock A finite element method for surface {PDE}s: Matrix properties.
\newblock {\em Numer. Math.}, 114:491--520, 2010.

\bibitem{olshanskii2014error}
M.~A. Olshanskii and A.~Reusken.
\newblock Error analysis of a space--time finite element method for solving
  {PDEs} on evolving surfaces.
\newblock {\em SIAM Journal on Numerical Analysis}, 52(4):2092--2120, 2014.

\bibitem{ORG09}
M.~A. Olshanskii, A.~Reusken, and J.~Grande.
\newblock A finite element method for elliptic equations on surfaces.
\newblock {\em SIAM J. Numer. Anal.}, 47:3339--3358, 2009.

\bibitem{olshanskii2012surface}
M.~A. Olshanskii, A.~Reusken, and X.~Xu.
\newblock On surface meshes induced by level set functions.
\newblock {\em Computing and visualization in science}, 15(2):53--60, 2012.

\bibitem{ORXsinum}
M.~A. Olshanskii, A.~Reusken, and X.~Xu.
\newblock An {Eulerian} space--time finite element method for diffusion
  problems on evolving surfaces.
\newblock {\em SIAM J. Numer. Anal.}, 52:1354--1377, 2014.

\bibitem{ORX}
M.~A. Olshanskii, A.~Reusken, and X.~Xu.
\newblock A stabilized finite element method for advection-diffusion equations
  on surfaces.
\newblock {\em IMA J. of Numer. Anal.}, 34:732--758, 2014.

\bibitem{olshanskii2016narrow}
M.~A. Olshanskii and D.~Safin.
\newblock A narrow-band unfitted finite element method for elliptic {PDEs}
  posed on surfaces.
\newblock {\em Mathematics of Computation}, 85(300):1549--1570, 2016.

\bibitem{OlshSafin2}
M.~A. Olshanskii and D.~Safin.
\newblock Numerical integration over implicitly defined domains for higher
  order unfitted finite element methods.
\newblock {\em Lobachevskii Journal of Mathematics}, 37:582--596, 2016.

\bibitem{olshanskii2016trace}
M.~A. Olshanskii and X.~Xu.
\newblock A trace finite element method for {PDEs} on evolving surfaces.
\newblock {\em SIAM J. Sci. Comput.}, 2017.
\newblock to appear.

\bibitem{petras2016pdes}
A.~Petras and S.~Ruuth.
\newblock {PDEs} on moving surfaces via the closest point method and a modified
  grid based particle method.
\newblock {\em Journal of Computational Physics}, 312:139--156, 2016.

\bibitem{Ravera}
F.~Ravera, M.~Ferrari, and L.~Liggieri.
\newblock Adsorption and partition of surfactants in liquid-liquid systems.
\newblock {\em Advances in Colloid and Interface Science}, 88:129--177, 2000.

\bibitem{reusken2015}
A.~Reusken.
\newblock Analysis of trace finite element methods for surface partial
  differential equations.
\newblock {\em IMA Journal of Numerical Analysis}, 35(4):1568--1590, 2015.

\bibitem{TobiskaBook}
H.-G. Roos, M.~Stynes, and L.~Tobiska.
\newblock {\em Numerical Methods for Singularly Perturbed Differential
  Equations --- Convection-Diffusion and Flow Problems}, volume~24 of {\em
  Springer Series in Computational Mathematics}.
\newblock Springer-Verlag, Berlin, second edition, 2008.

\bibitem{ruuth2008simple}
S.~J. Ruuth and B.~Merriman.
\newblock A simple embedding method for solving partial differential equations
  on surfaces.
\newblock {\em Journal of Computational Physics}, 227(3):1943--1961, 2008.

\bibitem{saye2015high}
R.~Saye.
\newblock High-order quadrature methods for implicitly defined surfaces and
  volumes in hyperrectangles.
\newblock {\em SIAM Journal on Scientific Computing}, 37(2):A993--A1019, 2015.

\bibitem{sethian1996fast}
J.~A. Sethian.
\newblock A fast marching level set method for monotonically advancing fronts.
\newblock {\em Proceedings of the National Academy of Sciences},
  93(4):1591--1595, 1996.

\bibitem{Tasoglu}
S.~Tasoglu, U.~Demirci, and M.~Muradoglu.
\newblock The effect of soluble surfactant on the transient motion of a
  buoyancy-driven bubble.
\newblock {\em Physics of fluids}, 20:040805--1, 2008.

\bibitem{xu2006level}
J.-J. Xu, Z.~Li, J.~Lowengrub, and H.~Zhao.
\newblock A level-set method for interfacial flows with surfactant.
\newblock {\em Journal of Computational Physics}, 212(2):590--616, 2006.

\bibitem{XuZh}
J.-J. Xu and H.-K. Zhao.
\newblock An {E}ulerian formulation for solving partial differential equations
  along a moving interface.
\newblock {\em Journal of Scientific Computing}, 19:573--594, 2003.

\end{thebibliography}

\end{document}